\def\ca{{\mathcal A}}
\def\cb{{\mathcal B}}
\def\cc{{\mathcal C}}
\def\ch{{\mathcal H}}
\def\cl{{\mathcal L}}
\def\cam{{\mathcal M}}
\def\car{{\mathcal R}}
\def\ct{{\mathcal T}}
\def\cu{{\mathcal U}}
\def\bc{{\mathbb C}}
\def\bn{{\mathbb N}}
\def\br{{\mathbb R}}
\def\bt{{\mathbb T}}
\def\bz{{\mathbb Z}}
\def\bno{\bn\cup\{0\}}
\def\a{\alpha}
\def\b{\beta}
\def\g{\gamma}        \def\G{\Gamma}
\def\d{\delta}        
\def\eps{\varepsilon}
\def\l{\lambda}       
\def\m{\mu}
\def\r{\rho}
\def\s{\sigma}        
\def\t{\tau}
\def\f{\varphi}
\def\O{\Omega}
\DeclareMathOperator{\tr}{tr}
\DeclareMathOperator{\id}{id}
\DeclareMathOperator{\dom}{Dom}
\def\zci{\bm{i}}
\newcommand{\set}[1]{\left\{#1\right\}}
\newcommand{\ov}{\overline}
\newcommand{\wt}{\widetilde}
\newcommand{\wh}{\widehat}
\DeclareMathOperator{\ssv}{span}
\DeclareMathOperator{\diag}{diag}
\DeclareMathOperator{\End}{End}
\DeclareMathOperator{\Aut}{Aut}
\newcommand{\norm}[1]{\left\Vert#1\right\Vert}
\newcommand{\ad}{\textrm{ad}}
\newcommand{\Ad}{\textrm{Ad}}
\def\length{\mathrm{length}}
\def\size{\mathrm{size}}
\def\level{\mathrm{level}}
\def\dd{\d} 
\def\uu{U} 
\newcommand{\betaC}{\beta} 
\newcommand{\gammaCA}{\gamma} 
\newcommand{\piA}{\pi} 
\newcommand{\piAinf}{\psi} 
\newcommand{\ppinf}{\pi_\infty} 
\newcommand{\ppuniv}{\pi_u} 
\newcommand{\ppcov}{\wt{\pi_u}} 
\newcommand{\rrinf}{\rho_\infty} 
\newcommand{\rruniv}{\rho_u} 
\newcommand{\rrcov}{\wt{\rho_u}} 
\newcommand{\ssinf}{\sigma_\infty} 
\newcommand{\ssuniv}{\sigma_u} 
\newcommand{\sscov}{\wt{\sigma_u}} 
\def\repr{\psi} 
\newcommand{\ppW}{\ppinf \rtimes W_\infty} 
\newcommand{\ssW}{\ssinf \rtimes W_\infty} 
\newcommand{\ssU}{\sscov \rtimes U} 
\def\molt{r} 
\def\som{d} 
\begin{document}
\markboth{V.Aiello, D.Guido, T.Isola}{Spectral triples on irreversible $C^*$-dynamical systems}

\title{Spectral triples on irreversible $C^*$-dynamical systems}
\author{Valeriano Aiello}
\address{Mathematisches Institut, Universit\"at Bern, Alpeneggstrasse 22, 3012 Bern, Switzerland
\\
valerianoaiello@gmail.com}
\author{Daniele Guido}
\address{ 
Dipartimento di Matematica, Universit\`a di Roma ``Tor Vergata'', I--00133 Roma, Italy \\ guido@mat.uniroma2.it}
\author{Tommaso Isola}
\address{
Dipartimento di Matematica, Universit\`a di Roma ``Tor Vergata'', I--00133 Roma, Italy \\ isola@mat.uniroma2.it}

\maketitle

\begin{abstract}
Given a spectral triple on a $C^*$-algebra $\mathcal A$ together with a unital injective endomorphism $\alpha$, the problem of defining a suitable crossed product $C^*$-algebra endowed with a spectral triple is addressed.  The proposed construction is mainly based on the works of Cuntz and \cite{Skalski}, and on our previous papers \cite{AiGuIs01,AGI3}.
The embedding of $\alpha(\mathcal A)$ in $\mathcal A$ can be considered as the dual form of a covering projection between noncommutative spaces. A main assumption is the expansiveness of the endomorphism, which takes the form of the local isometricity of the covering projection, and is expressed via the compatibility of the Lip-norms on $\mathcal A$ and $\alpha(\mathcal A)$.
\end{abstract}

\keywords{Crossed product, spectral triple, noncommutative coverings, Lip-semiboundedness}

\ccode{Mathematics Subject Classification 2010: 58B34, 46LXX,47L65.}%

\section{Introduction}


How to promote a spectral triple on an algebra to a spectral triple on a crossed product $C^*$-algebra has been the subject of various papers \cite{Skalski,Paterson,BMR,GaGr,IoMa};   the same has been recently done  for the structure of compact quantum metric spaces \cite{KaKy}.

The aim of this paper is to tackle the following question: is it possibile to extend the construction of a spectral triple on a crossed product $C^*$-algebra based on a spectral triple on the base algebra to the case of crossed products with a single endomorphism?

Even though we do not have yet a general answer to this problem, we are able to propose a procedure - some steps of which can be completely described, while for others  we can give several examples - which explains what we expect to be the general case.

Before describing our plan, we draw attention to a feature of our construction, namely we more or less explicitly assume that our endomorphism  is in a sense expansive, a notion which  has been often considered both in the commutative and in the noncommutative case, see e.g. \cite{DGMW}.  Such property has important consequences: the compact resolvent property for the Dirac operator forces the spectral triple on the crossed product to be semifinite, and the bounded commutator property requires a reduction of the crossed product $C^*$-algebra, namely a new definition of crossed product by an endomorphism. 

Indeed, even though there are now various notions of crossed product of a $C^*$-algebra with an endomorphism, see e.g. \cite{Murphy,Exel,KwLe}, we essentially follow a path outlined by Cuntz \cite{Cuntz} and then further developed by Stacey \cite{StaceyCrossed}, but we are forced to adapt it to the case of expansive endomorphisms.

According to Cuntz, given a $C^*$-algebra $\ca$ together with a unital injective endomorphism $\alpha$, one constructs  a direct system of $C^*$-algebras $\ca_n$ with endomorphisms $\alpha_n$, whence the direct limit $C^*$-algebra $\ca_\infty$ is obtained. The key point is that the endomorphism $\a$ of $\ca$ becomes an automorphism $\a_\infty$ on $\ca_\infty$, so that one may define the crossed product $\ca\rtimes_\a\bn$ as  the crossed product $\ca_\infty\rtimes_{\alpha_\infty}\bz$. Let us note that this definition gives back the original algebra when $\alpha$ is an automorphism.

The first and second  step of our construction  have been studied in \cite{AiGuIs01,AGI3}, where one assumes that a spectral triple $\ct$ on $\ca$ is given.
Let us observe that unital injective endomorphisms of a $C^*$-algebra $\ca$ can be seen as noncommutative self-coverings of the underlying noncommutative space; the first step is then to endow any of the $C^*$-algebras $\ca_n$ described above with a spectral triple $\ct_n$ which makes the self-covering locally isometric or, equivalently, such that the Lip-norms induced by the Dirac operators are compatible with the connecting maps (this property can and will be weakened in some cases, cf. Section \ref{UHF}). This means that the sequence of covering spaces consists of dilated copies of the original space. This request is the reason of the expansivity mentioned above. Even if we do not give a general procedure for this step, this is  not a difficult task in all the examples considered in  \cite{AiGuIs01,AGI3}.

The second step consists of constructing a spectral triple $\ct_\infty$ on the direct limit $\ca_\infty$ which is in some sense naturally associated with the original spectral triple on $\ca$. We note here that the algebra $\ca_\infty$ can be naturally seen as the solenoid algebra associated with the pair ($\ca,\alpha$), see \cite{AiGuIs01,AGI3,DGMW,LP2} for related constructions. In the abelian case, an intrinsic notion of solenoid, called compact universal cover, has been studied in \cite{Plaut} in great generality.

Coming back to $\ct_\infty$, we wish to define it as a suitable limit of the triples $\ct_n$ on $\ca_n$. This step is far from being obvious, firstly because there is no general procedure to define a limit of a sequence of spectral triples (however in some circumstances one may follow  \cite{FloGho}), secondly because the situations we consider are quite different, ranging from regular coverings associated with an action of an abelian group to (possibly ramified) coverings with trivial group of deck transformations. Examples illustrating this step are contained in \cite{AiGuIs01,AGI3} and briefly described below. In all cases, the coverings becoming wider and wider, the spectra of the Dirac operators turn more and more closely packed, so that the limit has no longer compact resolvent. However, a corresponding rescaling of the traces gives rise to a (semicontinuous semifinite) trace on a suitable $C^*$-algebra $\cb$ of geometric operators, which contains $\ca_\infty$ and the resolvents of the limiting Dirac operator, finally producing a semifinite spectral triple on $\ca_\infty$. This means in particular that the semifiniteness property is true already at the level of $\ca_\infty$, therefore determines the analogous semifiniteness property for the spectral triple on the crossed product.

The third and final step, which is the main object of this paper, consists in defining a new kind of crossed product of a $C^*$-algebra w.r.t. an endomorphism, which can be seen as a variant of the crossed product considered by Cuntz in \cite{Cuntz} and Stacey in \cite{StaceyCrossed}, and which turns out to be tailored to accomodate a spectral triple in the case of expansive endomorphisms. 

The notion of this new crossed product with an endomorphism is given in Definition \ref{1.1}. On the one hand it is a universal object, therefore defines a unique object up to isomorphisms, on the other hand, as shown in Theorem \ref{prop:crossedProd}, it coincides with a reduction by a projection of the $C^*$-algebra crossed product defined in \cite{StaceyCrossed}, Proposition 1.13. While the latter is nothing else than the crossed product of $\ca_\infty$ with $\bz$ w.r.t. $\alpha_\infty$, our notion can be considered as the crossed product of $\ca_\infty$ with $\bn$ w.r.t. $\alpha_\infty$. 

Indeed, while Stacey crossed product with an endomorphism reduces to the usual crossed product for an automorphism $\alpha$, ours produces a ``corner'' of it, in such a way that only positive powers of $\alpha$ are implemented.

The advantage of such a choice is to allow the weakening of the request of metric equicontinuity (Lip-boundedness in our paper) of \cite{Skalski}, which, for an action $\alpha$ of $\bz$ and a Lipschitz element  $a$ reads $\displaystyle \sup_{n\in\bz} L(\alpha^{-n}(a)) < \infty$ and  makes sense for automorphisms, to a condition on $\alpha$ that we call Lip-semiboundedness, namely $\displaystyle \sup_{n\in\bn} L(\alpha^{-n}(a)) < \infty$.
More precisely, in Section \ref{subsec:CrossedProd}, we first generalize the construction of a spectral triple on a crossed product described  in \cite{Skalski} to the case of a semifinite spectral triple, maintaining the Lip-boundedness assumption, and then modify it by replacing the crossed product of Cuntz-Stacey with our crossed product, and noting that in this case the request of the endomorphism being Lip-semibounded
is sufficient to guarantee the bounded commutator property of the spectral triple, cf. Theorem \ref{triple-cross-prod-N}. Moreover, such theorem shows that the metric dimension of the crossed product spectral triple equals the metric dimension of the base triple increased by 1.
On the same grounds, a theory for the action of suitable semigroups (e.g. $\bn^k$) can be established, but this will not be discussed here.

In the last section of this paper we show that the self-coverings considered in \cite{AiGuIs01,AGI3} satisfy the Lip-semiboundedness  condition, hence give rise to a semifinite spectral triple on the crossed product. 

The first example deals with the self-covering of a $p$-torus, which is a regular covering. Given a purely expanding integer valued matrix $B$, the covering projection goes from $\br^p/B\bz^p$ to $\br^p/\bz^p$ and the canonical Dirac operator on the covering makes the covering projection locally isometric. A natural embedding of the $C^*$-algebra $\ca_n$ in $\cb(\ch_0)\otimes M_{{\molt}^n}(\bc)$ gives rise to the embedding of the direct limit $C^*$-algebra $\ca_\infty$ in $\cb(\ch_0)\otimes \mathrm{UHF}_{\molt}$, which is the algebra $\cb$ mentioned above, where $r=|\det(B)|$ 
and $\mathrm{UHF}_r$ denotes the infinite tensor product of $M_r(\bc)$. 
Moreover,  the Dirac operators $D_n$ converge in the norm resolvent sense to a Dirac operator affiliated with $\cb(\ch_0)\otimes \mathrm{UHF}_{\molt}$. This structure produces a semifinite spectral triple on $\ca_\infty$, as shown in \cite{AiGuIs01}. Theorem \ref{teo-p-toro} shows that the condition of Lip-semiboundedness is satisfied, hence we get a semifinite spectral triple on our crossed product with $\bn$.

The second example treats the case of regular noncommutative coverings of the rational rotation algebra with abelian group of deck transformations as defined in \cite{AiGuIs01}. The procedure and the results are essentially the same as the previous example, but the condition $r \equiv_q \pm 1$ has to be further assumed in order to get a self-covering. 

The third example concerns the UHF-algebra with the covering map given by  the shift endomorphism and  the spectral triple described in \cite{Chris}. In this case the Lip-norms given by the spectral triples are not compatible, namely $\|[D_n,\alpha^n(a)]\|\ne\|[D_0,a]\|$ for $a$ Lipschitz in $\ca_0$, however $\|[D_{n+p},\alpha^p(a)]\|$ is bounded in $p$ (indeed converges) for any Lipschitz element in $\ca_n$. Again we show  that the condition of Lip-semiboundedness is satisfied, cf. Theorem \ref{UHFcrossedprod}.

The fourth and last example describes the crossed product associated with a ramified covering of the fractal called Sierpi\'nski gasket. Such covering is not given by an action of a group of deck transformations. Here the spectral triple  on $\ca$ is the one described in \cite{GuIs16}, and the spectral triples on $\ca_n$ make the covering maps locally isometric. The $C^*$-algebra $\cb$ containing both $\ca_\infty$ and the resolvents of $D_\infty$ is an algebra of geometric operators acting on the $\ell^2$ space on the edges of the infinite Sierpi\'nski gasket with one boundary point \cite{Tep}. The proof of the condition of Lip-semiboundedness  is contained in Theorem \ref{teo-gasket}.

In all cases, by Theorem \ref{triple-cross-prod-N}, the spectral triples are finitely summable and their metric dimension is equal to the metric dimension of $\ct$  plus 1, namely it is the sum of the metric dimension of $\ct$ and the growth of $\bn$.

Finally, we mention that even though in all of our examples the functional given by the norm of the commutator with the Dirac operator is a Lip-norm in the sense of Rieffel \cite{Rieffel99} on $\ca$, such property does not hold for the spectral triple on the crossed product. In fact any distance on the state space of a unital $C^*$-algebra inducing the weak$^*$-topology should necessarily be bounded, and this is not the case for our construction. The reason is that the expansiveness of the endomorphism $\alpha$ produces larger and larger (quantum) covering spaces and eventually an unbounded solenoid space. This property leads to an analogous unboundedness for the distance on the state space of the crossed product $C^*$-algebra.
\section{Crossed products for $C^*$-algebras}

\subsection{Preliminaries}

\textbf{Inductive limit}. We begin by recalling the construction of the inductive limit $C^*$-algebra, due to Takeda \cite{Takeda}, for the particular case of interest in this paper, to fix some notation. Let $\ca$ be a unital $C^*$-algebra,  $\a\in \End(A)$ an injective, unital $*$-endomorphism. Consider the following inductive system
\begin{equation} \label{eq:CstarIndLim1}
\begin{CD}
	 A_0 @ > \f_0 >>    A_1 @ > \f_1 >> \cdots
\end{CD}
\end{equation}
where, for all $n\in\bn=\{0,1,2,\ldots\}$, $A_n=\ca$, $\f_n=\a$, and define, for $m< n$, $\f_{nm} : A_m \to A_n$ by $\f_{nm} := \f_{n-1}\circ \cdots \circ \f_m \equiv \a^{n-m}$, 
and $\f_{mm}:=\id$. Consider the direct product $\prod_{n=0}^\infty A_n$, with pointwise operations, and set 
$$
A_\infty  := \left\{ (a_n) \in \prod_{n=0}^\infty A_n : \exists m\in\bn \textrm{ such that } a_n=\f_{nm}(a_m)= \a^{n-m}(a_m), n\geq m \right\}/\!\sim \; ,$$ 
where $(a_n)\sim(b_n) \iff a_n=b_n$ for all large enough $n$. Then, $A_\infty$ is a $^*$-algebra. For all $n\in\bn$, define $\f_{\infty n}: a\in A_n\mapsto \f_{\infty n}(a)\in A_\infty$, where $\f_{\infty n}(a) \equiv (a_k)$, and
$$
a_k := 
\begin{cases}
	0, & k<n,\\
	\f_{kn}(a)=\a^{k-n}(a), & k\geq n.
\end{cases}
$$ 
We can introduce a norm $p$ on $A_\infty$ given by 
$$
p(a):= \limsup\limits_{n\to\infty} \Arrowvert \f_{nm}(a_m) \Arrowvert = \norm{a_m}\; ,
$$ 
if $a=\f_{\infty m}(a_m)$, which is independent of the representative, and is a $C^*$-norm. Upon completion, we get the desired inductive limit $C^*$-algebra, which is denoted $\ca_\infty \equiv \varinjlim A_n$.

\smallskip

\textbf{Crossed product}. Let us recall  the definition of the crossed product by an automorphism, in  the case of unital $C^*$-algebras, to fix some notation.

Let $\ca$ be a unital $C^*$-algebra, $\a \in \Aut(\ca)$ an automorphism. Denote by $C_c(\ca,\bz,\a)$ the $^*$-algebra of functions $f: \bz \to \ca$ with finite support, pointwise addition and scalar multiplication, with product $(fg)(n):= \sum_{k\in\bz} f(k) \a^k(g(n-k))$,  and involution $f^*(n):= \a^n(f(-n)^*)$, $f,g\in C_c(\ca,\bz,\a)$, $n\in\bz$. Define a norm on $C_c(\ca,\bz,\a)$ by $\Arrowvert f \Arrowvert_1 := \sum_{n\in\bz} \Arrowvert f(n)\Arrowvert$, and denote by $\ell^1(\ca,\bz,\a)$ the Banach $^*$-algebra obtained by completing $C_c(\ca,\bz,\a)$ with respect to this norm. A different description of $\ell^1(\ca,\bz,\a)$
is obtained by introducing the functions $\dd_n(k):= \d_{k, n}$. Then, $\ell^1(\ca,\bz,\a)$ is the set of all sums $\sum_{n\in\bz} a_n \dd_n$, with $a_n\in\ca$, for all $n\in\bz$, and $\sum_{n\in\bz} \Arrowvert a_n\Arrowvert <+\infty$. Let now $\pi$ be a representation of $\ca$ on $\ch$, $V$ a unitary operator on $\ch$, such that $\pi(\a(a)) = V \pi(a) V^*$, $a\in\ca$. The triple $(\ch, \pi,V)$ is called a covariant representation of $(\ca,\a)$. Then, the integrated form of $(\ch, \pi,V)$ is the representation $\pi\rtimes V$ of $C_c(\ca,\bz,\a)$ on $\ch$ given by 
\begin{eqnarray} \label{IntegratedForm}
	\pi\rtimes V(\sum_{n\in\bz} a_n\dd_n) &:=& \sum_{n\in\bz} \pi(a_n) V^n. 
\end{eqnarray}
It can be proved (\cite{Ped} Proposition 7.6.4) that there is a bijection between the set of non-degenerate covariant representations $(\ch, \pi,V)$ of $(\ca,\a)$ on a Hilbert space $\ch$, and the set of non-degenerate continuous representations of $\ell^1(\ca,\bz,\a)$ on $\ch$. Define the universal representation $\ppuniv$ of $\ell^1(\ca,\bz,\a)$ to be the direct sum of all non-degenerate continuous representations of $\ell^1(\ca,\bz,\a)$ on Hilbert spaces. The crossed product of $\ca$ by the action $\a$ of $\bz$ is the $C^*$-algebra $\ca\rtimes_\a\bz$ obtained as the norm closure of $\ppuniv(\ell^1(\ca,\bz,\a))$. 

\smallskip

\textbf{Reduced crossed product}. Since $\bz$ is an amenable group, a different description (\cite{Ped},  7.7.7) of the crossed product (called the reduced crossed product, in  the case of non amenable groups) can be given. Let $\pi$ be 
a faithful, non-degenerate representation
of $\ca$ on $\ch$, set $\wt\ch := \ell^2(\bz,\ch) \equiv \{ \xi : \bz\to\ch | \sum_{n\in\bz} \Arrowvert \xi(n) \Arrowvert^2 <+\infty \}$, and, for $n\in\bz$, $a\in\ca$, $\xi\in\wt\ch$,
\begin{align*}
	(\uu \xi)(n) := \xi(n-1), \qquad (\wt{\pi}(a)\xi )(n) := \pi( \a^{-n}(a) )(\xi(n)).
\end{align*}
Observe that, $\wt{\pi}(\a(a))= \uu\wt{\pi}(a) \uu^* $, $a\in \ca$.
Therefore, $(\wt\ch, \wt{\pi},\uu)$ is a covariant representation of $(\ca,\bz,\a)$, and the representation $\wt{\pi} \rtimes \uu$ is called a regular representation of $\ell^1(\ca,\bz,\a)$. In particular, if $a = \sum_{n\in\bz} a_n \dd_n \in C_c(\ca,\bz,\a)$,  then $(\wt{\pi} \rtimes \uu (a) \xi )(n) = \sum_{k\in\bz} \pi( \a^{-n}(a_k)) (\xi(n-k))$, $n\in\bz$.
Define the universal regular representation $\l_u$ of $\ell^1(\ca,\bz,\a)$ to be the direct sum of all regular representations of $\ell^1(\ca,\bz,\a)$ on Hilbert spaces. The (reduced) crossed product of $\ca$ by the action $\a$ of $\bz$ is the $C^*$-algebra obtained as the norm closure of $\l_u(\ell^1(\ca,\bz,\a))$. Observe that (\cite{Ped},  7.7.4), if $\pi_u$ is the universal representation of $\ca$, then $\ca\rtimes_\a\bz$ coincides with the norm closure of $\wt{\pi_u}\rtimes \uu (\ell^1(\ca,\bz,\a))$.
Therefore,  we get $\ca \rtimes_{\a} \bz = \langle\, \wt{\pi_u}(\ca),\uu \,\rangle$, where $\langle\, \wt{\pi_u}(\ca),\uu \,\rangle$ stands for the $C^*$-algebra generated by $\wt{\pi_u}(\ca)$ and $\uu$.

\smallskip

\textbf{Lift of a spectral triple to a crossed product}. 
First of all we recall the definition of spectral triples.

\begin{definition}\label{deftripla}
An odd spectral triple $(\cl,\ch,D)$ consists of a Hilbert space $\ch$,
an algebra  $\cl$ acting (faithfully) on it,
  a self adjoint operator  $D$  on the same Hilbert space such that $a\dom(D) \subset \dom(D)$ and $[D, a]$ is bounded for any $a\in\cl$, and with $D$ having compact resolvent.
A spectral triple is said to be even if there exists a  self-adjoint unitary operator $\G$ such that $\pi(a)\G = \G\pi(a)$, $\forall a\in\ca$, and $D\G=-\G D$. 
\end{definition}

In \cite{BMR}, Bellissard, Marcolli and Reihani show how to lift a spectral triple  from a unital C*-algebra $\ca$, endowed with an automorphism $\a$, to the crossed product $\ca\rtimes_\a \bz$. Their setting is generalised in (\cite{Skalski}, Theorem 2.8) to the case of the action of a discrete group. In the particular case of an automorphism, one obtains


\begin{definition}\label{equi}
Let $\ca$ be a unital C*-algebra, $\a\in \Aut(\ca)$ a unital automorphism, $(\cl,\ch,D)$ a spectral triple on $\ca$ such that $\a(\cl)\subset\cl$.
The automorphism is said to be Lip-bounded if
$$
\sup_{n\in\bz} \| [D,\a^{-n}(a)] \| < \infty, \qquad \forall a\in\cl.
$$
\end{definition}
The previous notion was introduced in  \cite{Skalski} where it is called the metric equicontinuity of the action.

\begin{theorem} 
Let $\ca$ be a unital C*-algebra, $(\cl,\ch,D)$ an odd spectral triple on $\ca$, and
$\a\in \Aut(\ca)$ a unital Lip-bounded automorphism. Set
\begin{flalign*}
& \cl_\rtimes := {}^*\mathrm{alg}(\wt{\pi_u}(\cl),U), & \qquad  & \ch_\rtimes   := \ch \otimes \ell^2(\bz) \otimes \bc^2, & \\
& D_\rtimes := D \otimes I \otimes \eps_1 + I \otimes D_\bz \otimes \eps_2, & \qquad  & \G_\rtimes  := I \otimes I \otimes \eps_3, & 
\end{flalign*}
where ${}^*\mathrm{alg}(\wt{\pi_u}(\cl),U)$ is the $^*$-algebra generated by $\wt{\pi_u}(\cl)$ and $U$, $(D_\bz \xi)(n) := n\xi(n)$, $\forall \xi\in\ell^2(\bz)$,
and 
\begin{align}\label{pauli}
\eps_1 := \begin{pmatrix}
0 & 1 \\
1 & 0
\end{pmatrix}, \; \eps_2 := \begin{pmatrix}
0 & -i \\
i & 0
\end{pmatrix}, \; \eps_3 := \begin{pmatrix}
1 & 0\\
0 & -1
\end{pmatrix}
\end{align}
are the Pauli matrices. %
\smallskip

Then $(\cl_\rtimes,\ch_\rtimes,D_\rtimes,\Gamma_\rtimes)$ is an even spectral triple on $\ca\rtimes_\a \bz$.
\end{theorem}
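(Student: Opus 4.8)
The plan is to verify, one at a time, the four defining properties of an even spectral triple on $\ca\rtimes_\alpha\bz$: that $\cl_\rtimes$ is a dense $*$-subalgebra acting faithfully (which is immediate, since $\wt{\pi_u}(\cl)\cup\{U\}$ generates the same $C^*$-algebra as $\wt{\pi_u}(\ca)\cup\{U\}$, namely $\ca\rtimes_\alpha\bz$ by the identity $\ca\rtimes_\alpha\bz=\langle\wt{\pi_u}(\ca),U\rangle$ recalled above); that $D_\rtimes$ is self-adjoint with compact resolvent; that every element of $\cl_\rtimes$ preserves $\dom(D_\rtimes)$ with bounded commutator; and that $\G_\rtimes$ is a grading. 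I would start from the algebraic relations, all of which reduce to the Pauli anticommutation relations $\eps_i\eps_j+\eps_j\eps_i=2\delta_{ij}I$. Writing $T:=D\otimes I+iI\otimes D_\bz$, which is a normal (hence closed) operator on $\ch\otimes\ell^2(\bz)$ because its two self-adjoint summands strongly commute (they act on different tensor factors), one gets the off-diagonal form $D_\rtimes=\begin{pmatrix}0&T^*\\ T&0\end{pmatrix}$ on $\ch_\rtimes=(\ch\otimes\ell^2(\bz))\otimes\bc^2$, from which self-adjointness on $\dom(T)\oplus\dom(T^*)$ is the standard fact about such matrices. Since $\eps_1$ and $\eps_2$ anticommute, the cross terms cancel and $D_\rtimes^2=(D^2\otimes I+I\otimes D_\bz^2)\otimes I_{\bc^2}$.

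For the compact resolvent it then suffices that $(D^2\otimes I+I\otimes D_\bz^2+I)^{-1}$ be compact. As $D$ has compact resolvent, $D^2$ has discrete spectrum with finite multiplicities and eigenvalues tending to $+\infty$, while $D_\bz^2$ has eigenvalues $n^2$, $n\in\bz$; hence the tensor sum $D^2\otimes I+I\otimes D_\bz^2$ has, below any bound $R$, only finitely many eigenvalues counted with multiplicity, which is exactly the criterion for compactness of the resolvent. This is also the point at which the one-dimensional growth of $\bz$ enters, accounting for the eventual increase of the metric dimension by $1$.

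Next I would establish the bounded-commutator property, which I expect to be the conceptual heart and the only place where Lip-boundedness is used. Because $[D_\rtimes,\cdot]$ is a derivation and $\cl_\rtimes$ is generated by $\wt{\pi_u}(\cl)$ and $U$, by the Leibniz rule it is enough to bound the commutators on these generators. Using $[D_\bz,S]=S$ for the bilateral shift $S$ underlying $U$, a direct computation gives $[D\otimes I\otimes\eps_1,U]=0$ and $[I\otimes D_\bz\otimes\eps_2,U]=U(I\otimes I\otimes\eps_2)$, so $[D_\rtimes,U]$ (and likewise $[D_\rtimes,U^*]$) is bounded. For the algebra, $\wt{\pi_u}(a)$ is block-diagonal in the $\bz$-index with $n$-th block representing $\alpha^{-n}(a)$, so it commutes with $I\otimes D_\bz$ and only the first summand of $D_\rtimes$ contributes, yielding the block-diagonal operator
\begin{equation*}
[D_\rtimes,\wt{\pi_u}(a)]=\Big(\bigoplus_{n\in\bz}[D,\alpha^{-n}(a)]\Big)\otimes\eps_1,
\end{equation*}
whose norm equals $\sup_{n\in\bz}\|[D,\alpha^{-n}(a)]\|$. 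This is finite precisely by Lip-boundedness of $\alpha$, and it is exactly the supremum that would be uncontrolled without that hypothesis. Boundedness of these commutators on the common core, together with self-adjointness of $D_\rtimes$, gives $b\,\dom(D_\rtimes)\subseteq\dom(D_\rtimes)$ for every generator $b$, hence for all of $\cl_\rtimes$.

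Finally, for the even structure, $\G_\rtimes=I\otimes I\otimes\eps_3$ is self-adjoint with $\G_\rtimes^2=I$, hence a symmetry; it commutes with $\cl_\rtimes$ because both $\wt{\pi_u}(\cl)$ and $U$ act as the identity on the $\bc^2$ factor, and it anticommutes with $D_\rtimes$ since $\eps_3$ anticommutes with each of $\eps_1,\eps_2$. I expect the only genuine analytic care to lie in the self-adjointness of the unbounded sum $D_\rtimes$, which I would handle cleanly through the normal operator $T$ and its off-diagonal matrix as above; the decisive step, conceptually, is the norm computation of $[D_\rtimes,\wt{\pi_u}(a)]$, where Lip-boundedness is exactly what converts a supremum over $\bz$ of commutator norms into a finite bound.
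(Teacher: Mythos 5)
Your proof is correct and follows essentially the same route as the paper: the paper states this theorem without proof, deferring to \cite{BMR} and \cite{Skalski}, but the argument it gives for its own analogue (the unnumbered proposition preceding Theorem \ref{triple-cross-prod-N}, for semifinite triples and the crossed product by $\bn$) is precisely yours --- reduce to the generators, note that $D\otimes I$ commutes with $U$ and $I\otimes D_\bz$ commutes with the represented copy of $\cl$, compute the remaining commutator as the block-diagonal operator of norm $\sup_n\|[D,\a^{-n}(a)]\|$, finite exactly by Lip-boundedness, and get the compact resolvent from the tensor-sum spectral structure. The additional care you take with self-adjointness of $D_\rtimes$ via the off-diagonal normal operator $T=D\otimes I+iI\otimes D_\bz$ is a detail the paper leaves to the cited references, and it is handled correctly.
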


\begin{remark} 
In \cite{KaKy} a more general notion than Lip-boundeness of an automorphism is introduced, there called quasi-isometricity (see their Definition 4.4), and it is shown (in Example 4.5) that quasi-isometric automorphisms arise naturally in differential geometry.
\end{remark}


\subsection{A new definition of crossed product by an endomorphism}

There are many different definitions of the crossed product with an endomorphism, see e.g. \cite{Murphy}, \cite{Exel}, and the very general one given in \cite{KwLe}. We will work with a modification of the one introduced in \cite{Cuntz,StaceyCrossed}. Indeed, Cuntz (\cite{Cuntz}, pag. 101) considers the inductive sequence \eqref{eq:CstarIndLim1}, and its inductive limit C*-algebra $\ca_\infty$, which is endowed with an automorphism $\a_\infty$,  uniquely defined by the diagram \eqref{eq:CstarIndLim2}
\begin{equation} \label{eq:CstarIndLim2}
	\xymatrix{
	& \ca \ar[rr]^{ \a } \ar[dd]^{\a} &&  \ca \ar[rr]^{ \a  } \ar[dd]^{\a} && \ca \ar[rr]^{  \a  } \ar[dd]^{\a} && \cdots \ar[r]  &  \ca_\infty \ar[dd]^{\a_\infty} \\
	&&&&&&&&\\
	& \ca \ar[rr]^{  \a  } \ar[uurr]^{id} && \ca \ar[rr]^{ \a  } \ar[uurr]^{id} && \ca \ar[rr]^{ \a  } \ar[uurr]^{id} && \cdots \ar[r] & \ca_\infty
         }
\end{equation}
where the diagonal maps define the inverse $\a_\infty^{-1}$. Then Cuntz defined $\ca\rtimes_\a \bn := q(\ca_\infty \rtimes_{\a_\infty} \bz)q$, where $q\in\ca_\infty$ is the image of $1\in\ca$, and turns out to be $q=1$ in our case, since $\a$ is unital. Subsequently, Stacey \cite{StaceyCrossed} characterised $\ca\rtimes_\a \bn$ as the solution of a universal problem. 

In this paper, our interest is in lifting suitable spectral triples from $(\ca,\a)$, where $\a\in \End(\ca)$, to $\ca \rtimes_\a \bn$. Since we already know how to lift a spectral triple from $(\ca,\a)$ to $(\ca_\infty,\a_\infty)$, at least in some examples \cite{AiGuIs01,AGI3}, and the lift from $(\ca_\infty,\a_\infty)$ to $\ca_\infty \rtimes_{\a_\infty} \bz$ is well known \cite{BMR}, we found only natural to use Cuntz' definition of the crossed product $\ca\rtimes_\a \bn$. Unfortunately, the spectral triples $(\cl_\infty,\ch_\infty,D_\infty)$ on $(\ca_\infty,\a_\infty)$ we constructed in \cite{AiGuIs01,AGI3} satisfy, besides $\a_\infty(\cl_\infty)\subset \cl_\infty$, only $\sup_{n\in\bn} \| [D_\infty,\a_\infty^{-n}(a)] \| < \infty$, $\forall a\in\cl_\infty$. This fact forces us to introduce a modification in Cuntz' procedure, namely to consider $\ca \rtimes_\a\bn :=p(\ca_\infty \rtimes_{\a_\infty} \bz)p$, where $p \in \cb(\ell^2(\bz,\ch_u))$ is the projection on the non-negative ``frequencies'' 
\begin{equation} \label{eq:pProjection}
(p \xi)(n)  = 
\begin{cases}
	\xi(n), & n\geq 0,\\
	0, & n<0.
\end{cases}
\end{equation} 
Actually, we prefer to define our version of the crossed product by an endomorphism, in the same spirit of Stacey, as the solution to a universal problem, see Definition \ref{1.1}, and then prove in Theorem \ref{prop:crossedProd} that it coincides with $p(\ca_\infty \rtimes_{\a_\infty} \bz)p$.


\begin{definition}\label{Def:CovRep}
	Let $\ca$ be a unital $C^*$-algebra,  $\a\in \End(A)$ a $^*$-endomorphism.  Let  $\piA:\ca\to \cb(\ch)$ be a representation, $W\in \cb(\ch)$  an isometry. We say that $(\ch,\piA,W)$ is a covariant representation of  $(\ca,\a)$ on $\ch$, if 
\begin{align*}
	\piA(\a(a))W & = W\piA(a), \quad  a\in \ca,\\
	W^kW^{*k} & \in \piA(\ca)', \quad k\in\bn.
\end{align*}
\end{definition}


\begin{definition}  
\label{1.1}
	Let $\ca$ be a unital $C^*$-algebra,  $\a\in \End(A)$ an injective, unital $*$-endomorphism. The crossed product of $\ca$ with $\bn$ by $\a$ is a unital $C^*$-algebra $\cb$, together with a unital $^*$-monomorphism $\zci_\ca: \ca\to\cb$, and an isometry $t\in\cb$, such that
\begin{enumerate}
\item[$(1)$] $\cb$ is the $C^*$-algebra generated by $\zci_\ca(\ca)$ and $t$,

\item[$(2)$] $\zci_\ca(\a(a))t = t\zci_\ca(a)$, $a\in\ca$,

\item[$(3)$] $t^k(t^*)^k$ commutes with $\zci_\ca(\ca)$, $k\in\bn$,

\item[$(4)$] for every covariant representation $(\ch,\piA,W)$ of $(\ca,\a)$, there exists a non-degenerate representation $\widehat{\piA}$ of $\cb$ on $\ch$, such that $\widehat{\piA}\circ \zci_\ca = \piA$, and $\widehat{\piA}(t)=W$.
\end{enumerate}
\end{definition}


We denote by $\ca \rtimes_\a \bn$ the above algebra $\cb$. We have defined our crossed product as a universal object, which guarantees its uniqueness. For its existence, we will  prove in Proposition \ref{prop:crossedProd} that it is a reduction by a projection of the $C^*$-algebra crossed product defined by Cuntz in \cite{Cuntz}.

\subsection{Existence of the universal object}

Let us now consider the commutative diagram \eqref{eq:CstarIndLim2}.
It follows from (\cite{WO}, Theorem L.2.1) that the vertical maps determine a $^*$-homomorphism $\a_\infty:\ca_\infty\to\ca_\infty$, and the diagonal maps define the inverse of $\a_\infty$.

\begin{proposition} \label{prop:exists}
	Let $\ca$ be a unital $C^*$-algebra,  $\a$ a unital, injective $^*$-endomorphism of $\ca$. Then, there exists a covariant representation $(\ch,\piA,W)$ of $(\ca,\a)$. 
\end{proposition}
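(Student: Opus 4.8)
The plan is to produce the required covariant representation by compressing a covariant representation of the \emph{automorphism} system $(\ca_\infty,\a_\infty)$ to the subspace of non-negative frequencies. First I would choose a faithful non-degenerate representation $\pi_0$ of $\ca_\infty$ on a Hilbert space $\ck$ and form the associated covariant representation $(\wt\ck,\wt\pi,U)$ of $(\ca_\infty,\bz,\a_\infty)$ exactly as in the paragraph on the reduced crossed product: here $\wt\ck=\ell^2(\bz,\ck)$, $U$ is the bilateral shift $(U\xi)(n)=\xi(n-1)$, the representation acts diagonally by $(\wt\pi(x)\xi)(n)=\pi_0(\a_\infty^{-n}(x))\xi(n)$, and $\wt\pi(\a_\infty(x))=U\wt\pi(x)U^*$ for all $x\in\ca_\infty$. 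Writing $\iota:=\f_{\infty 0}\colon\ca\to\ca_\infty$ for the canonical embedding of $\ca=A_0$, the commutativity of the diagram \eqref{eq:CstarIndLim2} gives $\a_\infty\circ\iota=\iota\circ\a$; this is the identity that converts the $\a_\infty$-covariance of $U$ into the $\a$-covariance demanded by Definition \ref{Def:CovRep}.

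Next I would compress by the projection $p$ of \eqref{eq:pProjection} onto non-negative frequencies. Set $\ch:=p\wt\ck=\ell^2(\bn,\ck)$, and define
\begin{align*}
\piA(a):=\wt\pi(\iota(a))\big|_{\ch},\qquad W:=U\big|_{\ch}.
\end{align*}
The decisive structural point is that $\wt\pi(\ca_\infty)$ is diagonal with respect to $\wt\ck=\bigoplus_{n\in\bz}\ck$ and hence commutes with $p$; consequently $\ch$ is a reducing subspace for $\wt\pi(\iota(\ca))$, so $\piA$ is a genuine $^*$-representation of $\ca$ on $\ch$. Moreover $U$ maps $\ch$ isometrically into itself (its image omits frequency $0$), so $W$ is a well-defined isometry with $W^*W=I_{\ch}$, but not a unitary.

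It then remains to verify the two covariance relations. For the first, given $\xi\in\ch$ one computes, using $\iota\circ\a=\a_\infty\circ\iota$, the relation $\wt\pi(\a_\infty(x))=U\wt\pi(x)U^*$, and the fact that the vectors involved already lie in $\ch$ (so $p$ acts as the identity),
\begin{align*}
\piA(\a(a))W\xi=\wt\pi(\a_\infty(\iota(a)))\,U\xi=U\wt\pi(\iota(a))U^*U\xi=U\wt\pi(\iota(a))\xi=W\piA(a)\xi,
\end{align*}
which is precisely $\piA(\a(a))W=W\piA(a)$. For the second, since $W$ is an isometry, $W^kW^{*k}$ is the orthogonal projection onto $\range(W^k)=\ell^2(\{n\ge k\},\ck)$, i.e.\ the frequency cut-off above $k$; being diagonal in the frequency, $\piA(\ca)$ commutes with it, whence $W^kW^{*k}\in\piA(\ca)'$ for every $k\in\bn$. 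Thus $(\ch,\piA,W)$ is a covariant representation of $(\ca,\a)$.

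The construction is essentially forced, and the only genuinely delicate point is the bookkeeping with the frequency projections: one must check that $p$ commutes with $\wt\pi(\ca_\infty)$ (which both makes $\piA$ a representation and yields $W^*W=I_{\ch}$) and that the range projections $W^kW^{*k}$ are exactly the cut-offs onto $\ell^2(\{n\ge k\},\ck)$. Both are immediate from the diagonal form of $\wt\pi$, so I expect no real obstacle beyond this verification.
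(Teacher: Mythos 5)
Your proof is correct, but it takes a genuinely different route from the paper's. The paper's proof avoids compression altogether: it picks a faithful representation $\repr$ of $\ca_\infty\rtimes_{\a_\infty}\bz$, sets $\piA:=\repr\circ\ppcov\circ\f_{\infty 0}$ and $W:=\repr(\uu)$, and observes that $W$ is \emph{unitary}, so the second covariance condition is vacuous because $W^kW^{*k}=\repr(\uu^k\uu^{*k})=1\in\piA(\ca)'$. Both arguments exploit the same key fact, namely that $\a$ dilates to the automorphism $\a_\infty$ of $\ca_\infty$ via diagram \eqref{eq:CstarIndLim2}, but you then compress the bilateral regular representation to $\ell^2(\bn,\ck)$, producing a \emph{proper} isometry, and this requires exactly the extra bookkeeping you identify: that the frequency projection $p$ of \eqref{eq:pProjection} commutes with the diagonal representation $\wt\pi(\ca_\infty)$, and that the range projections $W^kW^{*k}$ are the frequency cut-offs onto $\ell^2(\{n\geq k\},\ck)$, hence diagonal and commuting with $\piA(\ca)$; all of these verifications are sound. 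What your approach buys: your triple $(\ch,\piA,W)$ is essentially the concrete model that the paper only constructs afterwards (Proposition \ref{prop:generate}), where $\zci_\ca(a)=\ppcov(\f_{\infty 0}(a))p$ and $t=p\uu p$ act on $p\ell^2(\bz,\ch_u)$ and generate $C^*(\ca,\a,\bn)$; so your existence proof simultaneously exhibits a covariant representation in which $W$ is a non-unitary isometry, which is the situation the whole crossed-product construction is designed to accommodate. What the paper's shortcut buys is brevity: since Definition \ref{Def:CovRep} allows $W$ to be unitary, no compression is needed and the commutant condition holds trivially.
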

\begin{proof}
Let $\repr$ be a faithful representation of $\ca_\infty \rtimes_{\a_\infty} \bz$ on a Hilbert  space $H$. If $\pi_u$ is the universal representation of $\ca_\infty$, let $\ppcov : \ca_\infty \to  \ca_\infty \rtimes_{\a_\infty} \bz$, $\uu \in \cu( \ca_\infty \rtimes_{\a_\infty} \bz)$ be such that $\ca_\infty \rtimes_{\a_\infty} \bz = \langle \ppcov(\ca_\infty), \uu \rangle$, $\piA := \repr \circ \ppcov \circ \f_{\infty 0} : \ca \to \cb(H)$, which is a representation of $\ca$ on $H$, and $W:= \repr(\uu) \in \cb(H)$, which is a unitary operator acting on $H$. Moreover, for all $a\in\ca$, $k\in\bn$, by using that $\f_{\infty 0}\circ\a=\a_\infty \circ \f_{\infty 0}$ and $\ppcov(\a_\infty(x))=\uu\ppcov(x)\uu^*$, we get
\begin{align*}
	\piA(\a(a)) W & = (\repr\circ \ppcov \circ \f_{\infty 0}(\a(a)) \cdot \repr(\uu) ) = \repr( \ppcov \circ \a_\infty \circ \f_{\infty 0}(a) \cdot \uu ) \\
	& = \repr( \uu \cdot \ppcov \circ \f_{\infty 0}(a)) = (\repr(\uu))  (\repr\circ \ppcov \circ \f_{\infty 0}(a)) \\
	& = W\piA(a), \\
	W^kW^{*k} & = \repr(\uu^k \uu^{*k}) = 1 \in \piA(\ca)'.
\end{align*}
\end{proof}


We now prove that any covariant representation of $(\ca,\a)$ lifts to a covariant representation of $(\ca_\infty,\a_\infty)$.

\begin{proposition} \label{prop:RepIndLim}
	Let $\ca$ be a unital $C^*$-algebra,  $\a$ a unital, injective $^*$-endomorphism of $\ca$, and denote by $\ca_\infty$ the $C^*$-algebra inductive limit of the inductive system \eqref{eq:CstarIndLim1},
and denote by $\a_\infty$ the automorphism of $\ca_\infty$ induced by $\a$. 
Let $(\ch,\piA,W)$ be a covariant representation of $(A,\a)$, and denote by $\ch_\infty \equiv \varinjlim H_n$ the Hilbert space inductive limit of the inductive system 
\begin{equation} \label{HilbertIndLim}
\begin{CD}
	 H_0 @ > S_0 >>    H_1 @ > S_1 >> \cdots
\end{CD}
\end{equation}
where, for all $n\in\bn$,  $H_n:=\ch$, $S_n:= W$. Then, there exist $W_\infty\in\cu(\ch_\infty)$, and a covariant representation $(\ch_\infty, \ppinf, W_\infty)$ of $(\ca_\infty,\a_\infty)$, such that 
\begin{align*}
	\ppinf\circ \f_{\infty n}(a) S_{\infty n}  & = S_{\infty n} \piA(a), \quad n\in\bno, a\in\ca, \\
	W_\infty S_{\infty 0} & = S_{\infty 0} W,
\end{align*} 
where $S_{\infty n}: \xi\in H_n \mapsto (\xi_k)\in \ch_\infty$, $\xi_k:= 
\begin{cases}
	0, & k<n, \\
	W^{k-n}\xi, & k\geq n.
\end{cases}$
\end{proposition}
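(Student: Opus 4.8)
The plan is to realize $(\ch_\infty,\ppinf,W_\infty)$ as the inductive limit of the constant system whose datum at each level is $(\ch,\piA,W)$, exploiting that the covariance relation is exactly the compatibility needed to pass to the limit. First I would set up the Hilbert space inductive limit: since $W$ is an isometry, each connecting map $S_n=W$ is isometric, so on the algebraic inductive limit the sesquilinear form obtained by pushing two vectors to a common high level, $\langle S_{\infty n}\xi,S_{\infty m}\eta\rangle:=\langle W^{k-n}\xi,W^{k-m}\eta\rangle$ for $k\geq n,m$, is well defined (its value is independent of $k$ because $W^*W=1$) and positive, producing $\ch_\infty$ after completion. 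The embeddings $S_{\infty n}$ are then isometries, $\bigcup_n S_{\infty n}(H_n)$ is dense, and the telescoping identity $S_{\infty n}=S_{\infty,n+1}\circ W$ lets one raise the level of any vector at will.

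Next I would build $\ppinf$. The covariance relation $\piA(\a(a))W=W\piA(a)$ is precisely the intertwining condition $\piA(\f_n(a))S_n=S_n\piA(a)$ that makes the constant family of representations $\piA$ on the $H_n$ compatible with both inductive systems, so they assemble into a representation $\ppinf$ of $\ca_\infty$ on $\ch_\infty$ determined by $\ppinf(\f_{\infty n}(a))S_{\infty n}=S_{\infty n}\piA(a)$, which is the first asserted identity. Concretely I would set $\ppinf(\f_{\infty m}(b))S_{\infty m}\xi:=S_{\infty m}(\piA(b)\xi)$ after raising everything to a common level $m$, check independence of the representative (using the covariance relation to commute $\piA$ past powers of $W$), and bound the result by $\|\piA(b)\|\,\|\xi\|\leq\|b\|\,\|\xi\|=\|\f_{\infty m}(b)\|\,\|\xi\|$, the last equality being the fact that an injective $^*$-endomorphism of a $C^*$-algebra is isometric, which is what makes $p$ a $C^*$-norm in the Preliminaries. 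Thus $\ppinf$ extends to a bounded $^*$-representation of $\ca_\infty$.

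I would then define $W_\infty$ as the level-lowering map $W_\infty S_{\infty n}\xi:=S_{\infty,n-1}\xi$ for $n\geq 1$, extended to level $0$ by first writing $S_{\infty 0}\xi=S_{\infty 1}(W\xi)$. Well-definedness holds because an identity $W^{k-n}\xi=W^{k-m}\eta$ in a tail is preserved after applying $W$; the map is isometric since every $S_{\infty k}$ is; and it is surjective because $S_{\infty m}\eta=W_\infty S_{\infty,m+1}\eta$, so its range contains the dense union. Hence $W_\infty$ is unitary, and the one-line computation $W_\infty S_{\infty 0}\xi=W_\infty S_{\infty 1}(W\xi)=S_{\infty 0}(W\xi)$ gives the second asserted identity. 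Finally I would verify covariance $\ppinf(\a_\infty(x))W_\infty=W_\infty\ppinf(x)$ on the dense set $x=\f_{\infty n}(a)$, $S_{\infty m}\xi$ with $m$ large, using $\a_\infty\circ\f_{\infty n}=\f_{\infty n}\circ\a$: both sides reduce to $S_{\infty,m-1}(\piA(\a^{m-n}(a))\xi)$ and so coincide; the remaining condition $W_\infty^k W_\infty^{*k}=1\in\ppinf(\ca_\infty)'$ is automatic once $W_\infty$ is unitary, so $(\ch_\infty,\ppinf,W_\infty)$ is a covariant representation of $(\ca_\infty,\a_\infty)$.

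I expect the main obstacle to be the promotion of $W_\infty$ to a \emph{unitary}: because $W$ is merely an isometry, the naive shift is only isometric, and it is the telescoping structure of the inductive limit that upgrades the level-lowering map to a surjective isometry. Establishing its well-definedness and, above all, its surjectivity is the delicate point, whereas the compatibility and boundedness of $\ppinf$ and the final covariance check are comparatively routine verifications.
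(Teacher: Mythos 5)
Your proposal is correct and follows essentially the same route as the paper: both realize $W_\infty$ as the level-lowering shift on the Hilbert-space inductive limit (unitary because it is a surjective isometry, equivalently because the diagonal identity maps provide its inverse) and both define $\ppinf$ on the dense union of the $S_{\infty n}H_n$ through the intertwining identity $\ppinf(\f_{\infty n}(a))S_{\infty n}=S_{\infty n}\piA(a)$, with the covariance relation $\piA(\a(a))W=W\piA(a)$ supplying well-definedness and the final covariance check. The only difference is packaging: the paper routes the construction of $\ppinf$ through the family $\piAinf_n=\Ad(W_\infty^{*n})\circ\piAinf_0$ and the universal property of the $C^*$-inductive limit, whereas you define it level by level and extend by the norm bound coming from the isometricity of injective $^*$-endomorphisms --- two equivalent ways of organizing the same argument.
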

\begin{proof}
Denote by $W_\infty$ the unitary operator on the inductive limit $\ch_\infty \equiv \varinjlim H_n$ defined by the following diagram 
\begin{equation*}
	\xymatrix{
	& \ch \ar[rr]^{ W } \ar[dd]^{ W } &&  \ch \ar[rr]^{ W  } \ar[dd]^{ W } && \ch \ar[rr]^{ W  } \ar[dd]^{ W } && \cdots \ar[r]  &  \ch_\infty \ar[dd]^{ W_\infty } \\
	&&&&&&&&\\
	& \ch \ar[rr]^{ W } \ar[uurr]^{id} && \ch \ar[rr]^{ W  } \ar[uurr]^{id} && \ch \ar[rr]^{ W  } \ar[uurr]^{id} && \cdots \ar[r] & \ch_\infty
         }
\end{equation*}
so that $W_\infty S_{\infty n} = S_{\infty,n-1}$, for all $n\in\bn$, $n\geq 1$, and $W_\infty S_{\infty 0} = S_{\infty 0}W$. 

Introduce a map $\piAinf_0:\ca \to \cb(\ch_\infty)$ by 
\begin{align*}
	\piAinf_0(a) S_{\infty m}\xi := S_{\infty m} \piA(\a^m(a)) \xi, \quad a\in\ca, m\in\bn, \xi\in H_m \equiv \ch,
\end{align*}
which is well defined, because, if $S_{\infty m}\xi = S_{\infty,m-1}\eta = S_{\infty m}W\eta$, then $\xi=W\eta$, and 
\begin{align*}
	S_{\infty m} \piA(\a^m(a)) \xi & = S_{\infty m} \piA(\a^m(a)) W\eta = S_{\infty m} W\piA(\a^{m-1}(a))\eta = S_{\infty,m-1} \piA(\a^{m-1}(a))\eta.
\end{align*}
Let us prove that $\piAinf_0$ is a representation of $\ca$. Indeed, for $a,b\in\ca$, we get, for all $m\in\bn$, $\xi\in H_m$,
\begin{align*}
	\piAinf_0(ab) S_{\infty m}\xi & = S_{\infty m} \piA(\a^m(ab)) \xi = S_{\infty m} \piA(\a^m(a))  \piA(\a^m(b)) \xi \\
	& =   \piAinf_0(a) S_{\infty m}   \piA(\a^m(b)) \xi  = \piAinf_0(a) \piAinf_0(b) S_{\infty m}  \xi.
\end{align*}
Moreover, for $a\in\ca_\infty$, $\xi,\eta\in\ch$, $m,n\in\bz$, we get, if $n<m$,
\begin{align*}
	(S_{\infty m}\xi, \piAinf_0(a)^*S_{\infty n}\eta) & = (\piAinf_0(a)S_{\infty m}\xi, S_{\infty n}\eta) = (S_{\infty m}\piA(\a^m(a))\xi, S_{\infty n}\eta) \\
	& = (S_{\infty m}\piA(\a^m(a))\xi, S_{\infty m}S_{mn}\eta) = (\piA(\a^m(a))\xi, S_{mn}\eta) \\
	& = (\xi, \piA(\a^m(a^*))W^{m-n}\eta) = (\xi, W^{m-n}\piA(\a^n(a^*))\eta) \\
	& = (S_{\infty m}\xi, S_{\infty n}\piA(\a^n(a^*))\eta) = (S_{\infty m}\xi, \piAinf_0(a^*)S_{\infty n}\eta).
\end{align*}

Setting, for all $n\in\bn$, $\piAinf_n := \Ad(W_\infty^*)^n\circ\piAinf_0$, we get, for $m\geq n+1$, 
\begin{align*}
	\piAinf_{n+1}(\a(a)) S_{\infty m} & = (W_\infty^*)^{n+1}\piAinf_0(\a(a))W_\infty^{n+1} S_{\infty m} = (W_\infty^*)^{n+1} \piAinf_0(\a(a)) S_{\infty,m-n-1} \\
	& = (W_\infty^*)^{n+1} S_{\infty,m-n-1} \piA(\a^{m-n}(a)) = (W_\infty^*)^{n} S_{\infty,m-n} \piA(\a^{m-n}(a)) \\
	& = (W_\infty^*)^{n} \piAinf_0(a) S_{\infty,m-n} = (W_\infty^*)^{n} \piAinf_0(a) W_\infty^n S_{\infty m} = \piAinf_n(a) S_{\infty m},
\end{align*}
so that $\piAinf_{n+1}(\a(a)) = \piAinf_n(a)$. Therefore, the following diagram commutes
\begin{equation*} 
\begin{CD}
	 A_0 @ > \f_0 >>    A_1 @ > \f_1 >> A_2 @ > \f_2 >> \cdots @ >>> \ca_\infty \\
	 @V \piAinf_0 VV @V \piAinf_1 VV @V \piAinf_2 VV @. @V \ppinf VV \\
	 \cb(\ch_\infty) @ > \id >>  \cb(\ch_\infty) @> \id >> \cb(\ch_\infty) @> \id >> \cdots @>>> \cb(\ch_\infty)	 
\end{CD}
\end{equation*}
so that there is a unique $^*$-homomorphism $\ppinf:\ca_\infty\to\cb(\ch_\infty)$ such that $\ppinf\circ \f_{\infty n} = \piAinf_n$, for all $n\in\bn$. Therefore,  for all $n\in\bn$, $a\in\ca$, we have 
\begin{align}
	\ppinf\circ \f_{\infty n}(a) S_{\infty n}  & = \piAinf_n(a)S_{\infty n} = W_\infty^{*n}\piAinf_0(a) W_\infty^n S_{\infty n} = W_\infty^{*n}\piAinf_0(a)  S_{\infty 0} \notag \\
	& = W_\infty^{*n}  S_{\infty 0} \piA(a) = S_{\infty n} \piA(a). \label{allaccia}
\end{align}
Finally, for all  $n\in\bn$, $n\geq 1$, $a\in A_n = \ca$, we have 
\begin{align*}
	\ppinf\circ\a_\infty\circ\f_{\infty n}(a) & = \ppinf\circ\f_{\infty n}\circ\a(a) =  \piAinf_n\circ\a(a)  = \piAinf_{n-1}(a) \\ 
	& = \Ad(W_\infty) \circ \piAinf_n(a)  = \Ad(W_\infty) \circ \ppinf\circ \f_{\infty n} (a),
\end{align*}
so that $\ppinf\circ \a_\infty = \Ad(W_\infty) \circ \ppinf$, that is $(\ch_\infty,\ppinf,W_\infty)$ is a covariant representation of $(\ca_\infty,\a_\infty)$.
\end{proof}


\bigskip

We recall that in the construction of $\ca_\infty \rtimes_{\a_\infty} \bz$ we 
denoted by $\pi_u$ the universal representation of $\ca_\infty$ on $\ch_u$,  so that  $\ca_\infty \rtimes_{\a_\infty} \bz = \langle\, \ppcov(\ca_\infty),\uu \,\rangle$. 

\noindent Define the projection $p\in \cb( \ell^2(\bz,\ch_u) )$ as in \eqref{eq:pProjection}, so that $p\ppcov(a)=\ppcov(a)p$, $a\in\ca_\infty$, and set $t:= p\uu p \equiv \uu p$, so that $t^*t = p$,  and $t\ppcov(a) = \ppcov(\a_\infty(a))t$, $a\in \ca_\infty$. Set $\zci_\ca(a) := \ppcov\circ \f_{\infty 0}(a)p$, which is a representation of $\ca$ on $p\ell^2(\bz,\ch_u)$, and denote by $C^*(\ca,\a,\bn)$ the $C^*$-algebra generated by $\zci_\ca(\ca)$ and $t$ on $p\ell^2(\bz,\ch_u)$. 


\begin{proposition} \label{prop:generate}
For any $a\in\ca$, $k\in\bn$, we have that 
\begin{itemize}
\item[$(1)$] $\zci_\ca(\a(a))t  = t\zci_\ca(a)$, 

\item[$(2)$] $t^k(t^*)^k \zci_\ca(a) = \zci_\ca(a) t^k(t^*)^k$,

\item[$(3)$] $C^*(\ca,\a,\bn)  \equiv \langle\, \zci_\ca(\ca), t \,\rangle = \langle\, p\ppcov(\ca_\infty)p, p\uu p  \,\rangle = \langle\, t^{*m}\zci_\ca(a)t^n: a\in\ca, m,n\in\bn \,\rangle = \langle\, \zci_\ca(a)t^mt^{*n}: a\in\ca, m,n\in\bn \,\rangle$.
\end{itemize}
\end{proposition}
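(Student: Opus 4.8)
The plan is to verify (1) and (2) by direct computation with the concrete operators $t=\uu p$ and $\zci_\ca(a)=\ppcov(\f_{\infty 0}(a))p$ on $p\ell^2(\bz,\ch_u)$, and then to reduce (3) to a single embedding identity together with a density argument.

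For (1), I would expand $\zci_\ca(\a(a))t=\ppcov(\f_{\infty 0}(\a(a)))pt$ and use $\f_{\infty 0}\circ\a=\a_\infty\circ\f_{\infty 0}$ together with $pt=t$ to rewrite it as $\ppcov(\a_\infty\f_{\infty 0}(a))t$; on the other side, $t\zci_\ca(a)=t\ppcov(\f_{\infty 0}(a))p=\ppcov(\a_\infty\f_{\infty 0}(a))tp$ by the already established intertwining relation $t\ppcov(x)=\ppcov(\a_\infty(x))t$, and $tp=t$ yields the same expression. For (2), the first step is the normal form $t^k=\uu^k p$ and $(t^*)^k=p\uu^{*k}$ (an easy induction using $p\uu p=\uu p$), so that $t^k(t^*)^k=\uu^k p\uu^{*k}$. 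I would then show this operator commutes with $\ppcov(\ca_\infty)$: inserting $\uu^{*k}\uu^k=1$ and using $\uu^{*k}\ppcov(x)\uu^k=\ppcov(\a_\infty^{-k}(x))$ together with $p\ppcov(\cdot)=\ppcov(\cdot)p$ moves $\ppcov(x)$ past $\uu^k p\uu^{*k}$; since it also commutes with $p$, it commutes with $\zci_\ca(a)=\ppcov(\f_{\infty 0}(a))p$.

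The heart of (3) is the identity
$$
(t^*)^n\,\zci_\ca(a)\,t^n \;=\; p\,\ppcov(\f_{\infty n}(a))\,p, \qquad a\in\ca,\ n\in\bn .
$$
To obtain it I would first record the standard inductive-limit relation $\a_\infty\circ\f_{\infty k}=\f_{\infty,k-1}$, whence $\f_{\infty n}=\a_\infty^{-n}\circ\f_{\infty 0}$ and therefore $\ppcov(\f_{\infty n}(a))=\uu^{*n}\ppcov(\f_{\infty 0}(a))\uu^n$. Compressing by $p$ and using $p\uu^{*n}=(t^*)^n$, $\uu^n p=t^n$, and $pt^n=t^n$ then yields the displayed identity. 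Since $\bigcup_n\f_{\infty n}(\ca)$ is dense in $\ca_\infty$ and $p$ commutes with $\ppcov(\ca_\infty)$, this identity shows $p\ppcov(\ca_\infty)p\subseteq\ov{\ssv}\{(t^*)^n\zci_\ca(a)t^n\}\subseteq\langle\zci_\ca(\ca),t\rangle$; the reverse inclusion is immediate from $\zci_\ca(a)=p\ppcov(\f_{\infty 0}(a))p$ and $t=p\uu p$, giving the equality $\langle\zci_\ca(\ca),t\rangle=\langle p\ppcov(\ca_\infty)p,p\uu p\rangle$.

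For the two monomial descriptions, each element $t^{*m}\zci_\ca(a)t^n$ (resp.\ $\zci_\ca(a)t^mt^{*n}$) lies in $C^*(\ca,\a,\bn)$ as a product of generators, while conversely the generators are themselves such monomials ($\zci_\ca(a)$ for $m=n=0$, and $t=\zci_\ca(1)\,t$ since $\zci_\ca(1)=p$); the first family is moreover $*$-closed because $(t^{*m}\zci_\ca(a)t^n)^*=t^{*n}\zci_\ca(a^*)t^m$. Hence the $C^*$-algebras generated by either family coincide with $C^*(\ca,\a,\bn)$. I expect the only real work, should one want the stronger assertion that these monomials span a dense subspace, to be the verification that the span is closed under multiplication: the middle factor $t^n t^{*m}$ must be reduced using $t^{*j}t^j=p$ and the fact from (2) that $t^j(t^*)^j=\uu^j p\uu^{*j}$ commutes with $\zci_\ca(\ca)$. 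The conceptual point making this delicate is that, $\a$ being a non-surjective endomorphism, $t^*$ intertwines $\zci_\ca(\a(a))$ with $\zci_\ca(a)$ but does not commute with a general $\zci_\ca(\ca)$, so the case analysis $n\lessgtr m$ has to be carried out with care; the single new input beyond the commutation relations (1)--(2) is the embedding identity displayed above.
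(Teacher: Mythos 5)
Your proposal is correct and follows essentially the same route as the paper: the same direct computations for (1) and (2) using $t^k=\uu^k p$, $p\ppcov(\cdot)=\ppcov(\cdot)p$ and $\uu^{*k}\ppcov(x)\uu^k=\ppcov(\a_\infty^{-k}(x))$, and for (3) the same key identity $p\,\ppcov(\f_{\infty n}(a))\,p=(t^*)^n\zci_\ca(a)t^n$ combined with density of $\bigcup_n\f_{\infty n}(\ca)$ in $\ca_\infty$. Your derivation of that identity via $\f_{\infty n}=\a_\infty^{-n}\circ\f_{\infty 0}$ is just a rephrasing of the paper's insertion of $\uu^{-m}\uu^{m}$ together with $\f_{\infty m}\circ\a^m=\f_{\infty 0}$, and your closing remark is right that, since $\langle\,\cdot\,\rangle$ denotes the generated $C^*$-algebra rather than a closed span, no reduction of mixed monomials is actually needed.
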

\begin{proof}
$(1)$ Indeed, for all $a\in\ca$, 
\begin{align*}
	\zci_\ca(\a(a))t & = \ppcov\circ\f_{\infty 0}\circ\a(a)t = \ppcov\circ\a_\infty\circ\f_{\infty 0}(a)t = t\ppcov\circ\f_{\infty 0}(a)p = t\zci_\ca(a).
\end{align*}

\noindent $(2)$ Indeed, since $t^k = \uu^kp = p\uu^kp$, we get
\begin{align*}
t^k(t^*)^k \zci_\ca(a) & = U^kp(U^*)^k \ppcov \circ\f_{\infty 0}(a)p = U^kp \ppcov \circ\a_\infty^{-k} \circ \f_{\infty 0}(a) (U^*)^kp  \\
& = U^k \ppcov \circ\a_\infty^{-k} \circ \f_{\infty 0}(a) p(U^*)^kp  = \ppcov  \circ \f_{\infty 0}(a) U^k  p(U^*)^kp  \\
& = \ppcov  \circ \f_{\infty 0}(a) pU^k  p(U^*)^kp  = \zci_\ca(a) t^k(t^*)^k.
\end{align*}

\noindent $(3)$ Indeed, $\ca_\infty = \ov{\ssv} \{ \f_{\infty m}(a) : a\in\ca, m\in\bn \}$, and
\begin{align*}
	p\ppcov\circ \f_{\infty m}(a)p  & = p\ppcov\circ \f_{\infty m}(a) \uu^{-m}\uu ^{m}p = p \uu^{-m} \ppcov \circ \a_\infty^m \circ\f_{\infty m}(a) \uu ^{m} p \\
	& = p \uu^{-m} \ppcov \circ \f_{\infty m} \circ \a^m(a) \uu ^{m }p  = p \uu^{-m} \ppcov \circ \f_{\infty 0}(a) \uu ^{m }p  = t^{*m} \zci_\ca(a) t^{m},
\end{align*}
so that $\langle\, p\ppcov(\ca_\infty)p, p\uu p  \,\rangle = \langle\, t^{*m}\zci_\ca(a)t^n: a\in\ca, m,n\in\bn \,\rangle = \langle\, \zci_\ca(a)t^mt^{*n}: a\in\ca, m,n\in\bn \,\rangle =  \langle\, \zci_\ca(\ca), t \,\rangle$.
\end{proof}


We want to prove that $C^*(\ca,\a,\bn)$ is isomorphic to the crossed product of $\ca$ with $\a$ by $\bn$. Actually, property $(1)$ in Definition \ref{1.1} follows by definition, while properties $(2)$ and $(3)$  have been proved in Proposition \ref{prop:generate}. Unfortunately, the proof of property $(4)$ in Definition \ref{1.1} will force us to a long detour. First of all, we need a $C^*$-algebra which contains $\ca_\infty\rtimes_{\a_\infty}\bz$ and a projection on the ``positive frequencies'' of $\bz$, and to which we can lift, in a canonical way, any representation of $\ca_\infty$. We start with some preliminary results. 
Denote by $\bz_\infty:=\bz\cup\{+\infty\}$ the spectrum of the $C^*$-algebra of  functions on $\bz$, vanishing at $-\infty$, and having finite limit for $n\to+\infty$, and let $\betaC$ be the automorphism of $C_0(\bz_\infty)$ given by $\betaC(f)(n):= f(n-1)$, $n\in\bz$. 

It follows from \cite{Sakai}, Proposition 1.22.3,  that $\ca_\infty \otimes C_0(\bz_\infty) \cong C_0(\bz_\infty,\ca_\infty)$, that is two-sided sequences of elements in $\ca_\infty$, vanishing at $-\infty$, and having norm-limit for $n\to+\infty$. It follows from \cite{Take},  Proposition IV.4.22 that there is a unique automorphism $\gammaCA\in \Aut( C_0(\bz_\infty;\ca_\infty) )$ such that $\gammaCA(a\otimes f) = \a_\infty(a)\otimes \betaC(f)$, $a\in\ca_\infty$, $f\in C_0(\bz_\infty)$.

In Proposition \ref{prop:reprOfCrossed}, we construct a  representation of $C_0(\bz_\infty;\ca_\infty) \rtimes_{\gammaCA} \bz$ on $\ell^2(\bz,\ch_u)$. Let $\rruniv$ be the representation of $C_0(\bz_\infty)$  on $\ch_u$ given by  $\rruniv(f)\xi = f(0)\xi$, $f\in C_0(\bz_\infty)$, $\xi\in\ch_u$. It follows from \cite{Take}, Proposition IV.4.7, that there is a unique representation $\ssuniv$ of $C_0(\bz_\infty;\ca_\infty)$ on $\ch_u$, such that $\ssuniv(a\otimes f) = \ppuniv(a)\rruniv(f)$, $a\in\ca_\infty$, $f\in C_0(\bz_\infty)$. 

Introduce the representations $\rrcov$ of $C_0(\bz_\infty)$ and $\sscov$ of $C_0(\bz_\infty;\ca_\infty)$ on $\ell^2(\bz,\ch_u)$ given by, for $a\in\ca_\infty$, $f\in C_0(\bz_\infty)$, $\xi \in \ell^2(\bz,\ch_u)$, $n\in\bz$,
\begin{align*}
	(\rrcov(f) \xi)(n)  & :=  \rruniv( \betaC^{-n}(f) ) \xi(n) = f(n)\xi(n), \\
	(\sscov(a\otimes f) \xi)(n)  & :=  \ssuniv( \gammaCA^{-n}(a\otimes f) ) \xi(n).
\end{align*}


\begin{proposition} \label{prop:reprOfCrossed}
\begin{itemize}

\item[$(1)$] $\uu \rrcov(f)\uu^* = \rrcov(\betaC(f))$, $f\in C_0(\bz_\infty)$.

\item[$(2)$]	The representation $\sscov$ of $C_0(\bz_\infty;\ca_\infty)$ on $\ell^2(\bz,\ch_u)$ is faithful, and
\begin{align*}
	\sscov( C_0(\bz_\infty;\ca_\infty) ) & =  \langle\, \ppcov(\ca_\infty), \rrcov(C_0(\bz_\infty)) \,\rangle, \\
	\uu \sscov(a\otimes f)\uu^* & = \sscov(\gammaCA(a\otimes f)), \quad a\in\ca_\infty, f\in C_0(\bz_\infty).
\end{align*}

\item[$(3)$] The regular representation $\chi:=\ssU$ of $C_0(\bz_\infty,\ca_\infty) \rtimes_{\gammaCA} \bz$, induced from $\ssuniv$ on $\ell^2(\bz,\ch_u)$, is faithful.
\end{itemize}

\end{proposition}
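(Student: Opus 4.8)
The plan is to reduce the first two parts to the explicit block-diagonal form of the operators and to reserve the genuine work for $(3)$. For $(1)$ and for the covariance relation in $(2)$ I would evaluate both sides on $\xi\in\ell^2(\bz,\ch_u)$ at an index $n$. Since $(\uu\xi)(n)=\xi(n-1)$, conjugation by $\uu$ shifts the index by one, so $(\uu\rrcov(f)\uu^*\xi)(n)=f(n-1)\xi(n)=(\rrcov(\betaC(f))\xi)(n)$ because $\betaC(f)(n)=f(n-1)$; the identical computation, now carrying $\a_\infty$ along, gives $\uu\sscov(a\otimes f)\uu^*=\sscov(\gammaCA(a\otimes f))$ in view of $\gammaCA(a\otimes f)=\a_\infty(a)\otimes\betaC(f)$.

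The generation and faithfulness in $(2)$ both follow from unwinding the definition into $(\sscov(a\otimes f)\xi)(n)=\ssuniv(\gammaCA^{-n}(a\otimes f))\xi(n)=f(n)\,\ppuniv(\a_\infty^{-n}(a))\xi(n)$, i.e. $\sscov(a\otimes f)=\ppcov(a)\rrcov(f)$, a product of two commuting block-diagonal operators. As simple tensors generate $C_0(\bz_\infty;\ca_\infty)$, the image is the closed algebra generated by these products, namely by $\ppcov(\ca_\infty)$ and $\rrcov(C_0(\bz_\infty))$ (note that $\rrcov(f)=\sscov(1\otimes f)$ already lies in the image, since $\ca_\infty$ is unital and $\ppcov(1)=1$). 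For faithfulness I would observe that a general $x=(x_n)\in C_0(\bz_\infty;\ca_\infty)$ is carried to $\bigoplus_{n}\ppuniv(\a_\infty^{-n}(x_n))$; thus $\sscov(x)=0$ forces $\ppuniv(\a_\infty^{-n}(x_n))=0$ for every $n$, and since $\ppuniv$ is faithful and $\a_\infty$ an automorphism, every $x_n=0$, so $x=0$.

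Part $(3)$ is the crux, and the obstacle is that the coefficient representation $\ssuniv$ is not faithful: it is essentially $\ppuniv$ composed with evaluation at $0\in\bz_\infty$, so the textbook statement ``a regular representation induced from a faithful representation is faithful'' does not apply to $\chi=\ssU=\sscov\rtimes\uu$ directly. My plan is to exploit that the translated family $\{\ssuniv\circ\gammaCA^{n}:n\in\bz\}$ separates points: one checks that $\ker\ssuniv=\{x:x_0=0\}$ contains no nonzero $\gammaCA$-invariant ideal, because invariance propagates the vanishing at index $0$ to every index. Hence $\pi_0:=\bigoplus_{n\in\bz}\ssuniv\circ\gammaCA^{n}$ is a faithful representation of $C_0(\bz_\infty;\ca_\infty)$, and its induced regular representation is faithful on the crossed product, $\bz$ being amenable so that reduced and full coincide.

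Finally I would compare $\chi$ with $\mathrm{Ind}\,\pi_0$. Inducing is additive over direct sums of the coefficient representation, and inducing from $\ssuniv\circ\gammaCA^{n}$ is unitarily equivalent to inducing from $\ssuniv$ itself, the equivalence being the index-shift unitary, which also commutes with $\uu$ and therefore intertwines the integrated forms. This yields $\mathrm{Ind}\,\pi_0\cong\bigoplus_{n}\mathrm{Ind}(\ssuniv\circ\gammaCA^{n})\cong\chi^{(\infty)}$, an amplification of $\chi$; since an amplification has the same kernel as $\chi$ and $\mathrm{Ind}\,\pi_0$ is faithful, $\chi$ is faithful. The step I expect to be most delicate is checking cleanly that the index-shift unitary intertwines the two induced representations together with the implementing unitary $\uu$, that is, that the induction functor is insensitive to translating the coefficient representation; the separation-of-points verification itself is routine.
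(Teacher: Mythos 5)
Your handling of $(1)$ and of the faithfulness and covariance claims in $(2)$ coincides with the paper's own proof, which consists of exactly the same pointwise formula $(\sscov(g)\xi)(k)=\ppuniv(\a_\infty^{-k}(g(k)))\xi(k)$ and conjugation computations. The genuine difference is in $(3)$: the paper settles it with a bare citation of \cite{Will}, Theorem 7.13, whereas you reconstruct the argument. Your reconstruction is correct, and it is worth having because it makes explicit the point a reader could miss: $\ssuniv$ is $\ppuniv$ composed with evaluation at $0$, hence not faithful, so one cannot simply quote ``a regular representation induced from a faithful representation is faithful, plus amenability''. Your three ingredients --- that $\ker\ssuniv$ contains no nonzero $\gammaCA$-invariant ideal (your propagation argument is right, since $\gammaCA^k(g)(0)=\a_\infty^k(g(-k))$), additivity of induction over direct sums, and $\mathrm{Ind}(\ssuniv\circ\gammaCA^n)\cong\mathrm{Ind}\,\ssuniv$ --- do assemble into a complete proof once amenability of $\bz$ is invoked. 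Moreover, the step you single out as delicate is in fact immediate: the index-shift unitary on $\ell^2(\bz,\ch_u)$ intertwining $\mathrm{Ind}\,\ssuniv$ with $\mathrm{Ind}(\ssuniv\circ\gammaCA^n)$ is $\uu^n$ itself, which obviously commutes with $\uu$; thus $\mathrm{Ind}(\ssuniv\circ\gammaCA^n)\rtimes\uu=\Ad(\uu^n)\circ\chi$, every summand of $\mathrm{Ind}\,\pi_0\rtimes\uu$ is unitarily equivalent to $\chi$, and the kernels agree on the nose.

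There is, however, a genuine gap in your proof of the first display of $(2)$, the generation statement. It is true that $\mathrm{im}\,\sscov$ is the $C^*$-algebra generated by the products $\ppcov(a)\rrcov(f)$: the two families commute, and the products are stable under multiplication and adjoints, so this is just their closed linear span. But that is \emph{not} the $C^*$-algebra generated by the two sets $\ppcov(\ca_\infty)$ and $\rrcov(C_0(\bz_\infty))$, and no argument can make it so: each $\sscov(g)$ is block diagonal with $n$-th block $\ppuniv(\a_\infty^{-n}(g(n)))$, whose norm $\|g(n)\|$ tends to $0$ as $n\to-\infty$, and this property passes to norm limits; hence $\ppcov(1)=I\notin\mathrm{im}\,\sscov$, while $I\in\langle\,\ppcov(\ca_\infty),\rrcov(C_0(\bz_\infty))\,\rangle$ in the paper's declared sense of $\langle\,\cdot\,\rangle$. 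Your parenthetical yields only $\rrcov(C_0(\bz_\infty))\subset\mathrm{im}\,\sscov$; the inclusion $\ppcov(\ca_\infty)\subset\mathrm{im}\,\sscov$ fails, and it cannot be rescued by an approximate unit, because $\rrcov(e_\lambda)\to I$ only strongly and $\|I-\rrcov(e_\lambda)\|=\sup_n|1-e_\lambda(n)|=1$ for every $\lambda$. In fairness, this is a defect of the statement as printed rather than of your strategy alone: the paper's own proof of $(2)$ establishes only faithfulness and covariance and is silent on this display. What your computation does prove, and what the rest of the section actually needs, is that $\sscov(C_0(\bz_\infty;\ca_\infty))$ equals the closed linear span of $\{\ppcov(a)\rrcov(f):a\in\ca_\infty,\ f\in C_0(\bz_\infty)\}$, which is an ideal of $\langle\,\ppcov(\ca_\infty),\rrcov(C_0(\bz_\infty))\,\rangle$; you should state and use it in that form.
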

\begin{proof}
$(1)$  is a computation.

\noindent $(2)$ It is easy to see that $(\sscov(g)\xi)(k) = \pi_u(\a_\infty^{-k}(g(k)))\xi(k)$, $k\in\bz$, $\xi\in\ell^2(\bz,\ch_u)$, $g\in C_0(\bz_\infty,\ca_\infty)$, from which it follows that $\sscov$ is faithful. Moreover, for $a\in\ca_\infty$, $f\in C_0(\bz_\infty)$, one has
\begin{align*}
	\sscov(\gammaCA( a\otimes f)) & = \sscov(\a_\infty(a)\otimes \betaC(f)) = \ppcov(\a_\infty(a)) \rrcov(\betaC(f)) \\
	& = \uu \ppcov(a) \uu^* \uu \rrcov(f) \uu^* = \uu \sscov(a\otimes f) \uu^*.
\end{align*}

\noindent $(3)$ This follows from \cite{Will}, Theorem 7.13.
\end{proof}

It follows from the previous Proposition that $\cc:= \langle\, \ppcov(\ca_\infty), \rrcov(C_0(\bz_\infty)), \uu \,\rangle \subset \cb(\ell^2(\bz,\ch_u))$ is isomorphic, via $\chi^{-1}$, to $(\ca_\infty \otimes C_0(\bz_\infty)) \rtimes_{\gammaCA} \bz$, and contains $C^*(\ca,\a,\bn)$. It follows from its construction that we can lift canonically  to $\cc$ any representation of $\ca_\infty$, as we prove in Proposition \ref{prop:Sigma}.

We now begin the proof of property $(4)$ in Definition \ref{1.1}. In rough terms, starting from a covariant representation $\pi$ of $(\ca,\a)$ on a Hilbert space $\ch$, we construct a covariant representation $\pi_\infty$ of $(\ca_\infty,\a_\infty)$ on $\ch_\infty$. Then we construct a suitable representation $\r_\infty$ of $C_0(\bz_\infty)$ on $\ch_\infty$, which allows us to construct a representation $\s_\infty$ of $C_0(\bz_\infty;\ca_\infty)$ on $\ch_\infty$, and then a representation $\s_\infty \rtimes W_\infty$ of $C_0(\bz_\infty;\ca_\infty) \rtimes_\g \bz$, viz. a representation $\pi_\cc$ of $\cc$, on $\ch_\infty$, that we can restrict to $C^*(\ca,\a,\bn)$, and compress to a representation $\wh{\pi}$ on $\ch$ that satisfies property $(4)$ in Definition \ref{1.1}.

In order to the help the reader with the understanding of the following statements and proofs, we exhibit two tables with
the $C^*$-algebras considered, and their representations on the various Hilbert spaces
\begin{center}
  \begin{tabular}{ | c | c | c | c | c | }
    \hline
& $\Aut(\cdot)$ & $\ch_\infty$ & $\ch_u$ & $\ell^2(\bz;\ch_u)$ \\ \hline
$\ca_\infty$ & $\a_\infty$ & $\pi_\infty$ & $\pi_u$ & $\wt{\pi_u}$ \\ \hline
$C_0(\bz_\infty)$ & $\b$ & $\rho_\infty$ & $\rho_u$ & $\wt{\rho_u}$ \\ \hline
$C_0(\bz_\infty;\ca_\infty)$ & $\g\equiv \a_\infty\otimes\b$ & $\s_\infty$ & $\s_u$  & $\wt{\s_u}$ \\ \hline
$\ca_\infty \rtimes_{\a_\infty} \bz$ & - & $\pi_\infty \rtimes W_\infty$ & - & $\wt{\pi_u} \rtimes U$ \\ \hline
$C_0(\bz_\infty;\ca_\infty) \rtimes_{\g} \bz$ & - & $\s_\infty \rtimes W_\infty$ & - & $\chi\equiv \wt{\s_u} \rtimes U$ \\ \hline
$\cc \equiv \chi(C_0(\bz_\infty;\ca_\infty) \rtimes_{\g} \bz)$ & - & $\pi_\cc \equiv (\s_\infty \rtimes W_\infty)\circ \chi^{-1}$ & - & id \\ \hline
  \end{tabular}
\end{center}
and
\begin{center}
  \begin{tabular}{ | c | c | c | c | c | }
    \hline
& $\End(\cdot)$ & $\ch$ & $\ch_\infty$ & $p\ell^2(\bz;\ch_u)$ \\\hline
$\ca$ & $\a$ & $\pi$ & $\psi_0$  & $\zci_\ca \equiv \wt{\pi_u}\circ\f_{\infty0}(\cdot)p$ \\\hline
$C^*(\ca,\a,\bn)$ & - &  $\wh{\pi}\equiv S_{\infty0}^*\pi_\cc(\cdot)S_{\infty0}$ & $\pi_\cc|_{C^*(\ca,\a,\bn)}$ & id\\\hline
  \end{tabular}
\end{center}

Let  $(\ch,\piA,W)$ be a covariant representation of $(A,\a)$, and recall from Proposition \ref{prop:RepIndLim} that there exist $W_\infty\in\cu(\ch_\infty)$, and a covariant representation $(\ch_\infty, \ppinf, W_\infty)$ of $(\ca_\infty,\a_\infty)$, on  $\ch_\infty \equiv \varinjlim H_n$, the Hilbert space inductive limit of the inductive system \eqref{HilbertIndLim},
such that $\ppinf\circ \f_{\infty n}(a) S_{\infty n}  = S_{\infty n} \piA(a)$, for all $n\in\bn$, $a\in\ca$, and $W_\infty S_{\infty 0} = S_{\infty 0} W$. 

We now construct a representation $\rrinf$ of $C_0(\bz_\infty)$ on $\ch_\infty$ such that $[\ppinf(a),\rrinf(f)]=0$, for all $a\in\ca_\infty$, $f\in C_0(\bz_\infty)$. 


\begin{proposition} \label{prop:spectralFamily}
Set $P_0:= S_{\infty 0}S_{\infty 0}^*$, $P_n:= \Ad(W_\infty^n)(P_0)$, $n\in\bz$. Then
\begin{itemize}
	\item[$(1)$] $\{ P_n:n\in\bz\}$ is a decreasing family of projections in $\cb(\ch_\infty)$,
	
	\item[$(2)$] there exists $P_{+\infty}:= \lim_{n\to+\infty} P_n$, in the strong operator topology of $\cb(\ch_\infty)$,
	
	\item[$(3)$] $\lim_{n\to-\infty} P_n = 1$, in the strong operator topology of $\cb(\ch_\infty)$,
	
	\item[$(4)$] $\{ P_n : n\in\bz_\infty \} \subset \ppinf(\ca_\infty)'$.
	\end{itemize}
\end{proposition}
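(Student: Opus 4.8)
The plan is to identify each $P_n$ with an orthogonal projection onto an explicit subspace of $\ch_\infty$ and to read off all four assertions from the geometry of the ranges of the isometries $S_{\infty m}$, reserving the covariance hypothesis on $W$ for the last, and only genuinely delicate, point. First I would record three consequences of Proposition \ref{prop:RepIndLim}: since the connecting maps of \eqref{HilbertIndLim} are the isometry $W$, each $S_{\infty m}$ is an isometry (so $S_{\infty 0}^*S_{\infty 0}=1$ and $P_0=S_{\infty 0}S_{\infty 0}^*$ is the projection onto $\range(S_{\infty 0})$), and one has $S_{\infty 0}=S_{\infty m}W^m$, $W_\infty S_{\infty 0}=S_{\infty 0}W$, and $W_\infty^{-m}S_{\infty 0}=S_{\infty m}$ for $m\ge 0$. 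Because $W_\infty$ is unitary, $P_n=\Ad(W_\infty^n)(P_0)$ is the projection onto $\range(W_\infty^nS_{\infty 0})$; thus for $m\ge0$ the projection $P_{-m}$ has range $\range(S_{\infty m})$, while $P_m$ has range $\range(W_\infty^mS_{\infty 0})=\range(S_{\infty 0}W^m)=S_{\infty 0}(\range(W^m))$.

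For $(1)$ it then suffices to prove $P_1\le P_0$ and conjugate by $W_\infty^n$: indeed $\range(P_1)=S_{\infty 0}(\range W)\subseteq\range(S_{\infty 0})=\range(P_0)$, so $P_{n+1}=\Ad(W_\infty^n)(P_1)\le\Ad(W_\infty^n)(P_0)=P_n$. Part $(2)$ is then immediate, since a decreasing sequence of projections converges strongly; the limit $P_{+\infty}$ is the projection onto $\bigcap_{n\ge0}\range(P_n)$. For $(3)$, the ranges $\range(S_{\infty m})=\range(P_{-m})$ increase with $m$ (as $S_{\infty m}=S_{\infty,m+1}W$) and their union is dense in the inductive limit $\ch_\infty$, so $P_{-m}\uparrow 1$ strongly, i.e. $\lim_{n\to-\infty}P_n=1$.

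The only substantial step is $(4)$, and I expect the crux to be showing $P_0\in\ppinf(\ca_\infty)'$; once this is known, the general case follows formally. Since $\ca_\infty=\overline{\bigcup_m\f_{\infty m}(\ca)}$ and each $\f_{\infty m}(\ca)$ is a $*$-subalgebra, and since the commutant is norm closed, it is enough to show that $\range(S_{\infty 0})$ is invariant under $\ppinf(\f_{\infty m}(b))$ for all $b\in\ca$, $m\in\bn$ (invariance under a selfadjoint family makes the range reducing, whence $P_0$ commutes with it). Using $\ppinf\circ\f_{\infty m}=\Ad(W_\infty^*)^m\circ\piAinf_0$ together with $W_\infty^mS_{\infty 0}=S_{\infty 0}W^m$ and the defining relation $\piAinf_0(b)S_{\infty 0}=S_{\infty 0}\piA(b)$, a short computation gives $\ppinf(\f_{\infty m}(b))S_{\infty 0}\xi=S_{\infty m}\,\piA(b)W^m\xi$.

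Here is where the covariance condition $W^mW^{*m}\in\piA(\ca)'$ enters decisively: it yields $W^mW^{*m}\piA(b)W^m\xi=\piA(b)W^mW^{*m}W^m\xi=\piA(b)W^m\xi$, so $\piA(b)W^m\xi\in\range(W^m)$, and therefore $S_{\infty m}\piA(b)W^m\xi\in S_{\infty m}(\range(W^m))=\range(S_{\infty 0})$, proving the invariance. Finally, for general $n$ one writes $P_n\ppinf(a)=W_\infty^nP_0\,\ppinf(\a_\infty^{-n}(a))W_\infty^{-n}=W_\infty^n\ppinf(\a_\infty^{-n}(a))P_0W_\infty^{-n}=\ppinf(a)P_n$, using $\a_\infty^{-n}(a)\in\ca_\infty$ and the case just proved, while $P_{+\infty}\in\ppinf(\ca_\infty)'$ follows because the commutant is closed in the strong operator topology. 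The point to stress is that without the second covariance relation of Definition \ref{Def:CovRep} the subspace $\range(S_{\infty 0})$ would fail to be invariant under $\ppinf(\f_{\infty m}(\ca))$ for $m>0$, so this hypothesis is exactly what drives part $(4)$.
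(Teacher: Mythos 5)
Your proof is correct and follows essentially the same route as the paper's: the same identities ($S_{\infty 0}=S_{\infty m}W^{m}$, $W_\infty^{m}S_{\infty 0}=S_{\infty 0}W^{m}$, $\piAinf_0(b)S_{\infty 0}=S_{\infty 0}\piA(b)$), the same decisive use of the covariance relation $W^{k}W^{*k}\in\piA(\ca)'$ in part $(4)$, and the identical conjugation argument passing from $P_0$ to general $P_n$, with $P_{+\infty}$ handled by strong limits. The only differences are stylistic: you phrase $(1)$ and $(4)$ in the language of ranges (order preservation under $\Ad(W_\infty^{n})$, and invariance of $\range(S_{\infty 0})$ under a $*$-closed family), where the paper carries out the corresponding two-sided operator computations directly.
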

\begin{proof}
$(1)$ Let $n\in\bz$. If $n\geq0$, then 
$$
P_n = W_\infty^nS_{\infty 0}S_{\infty 0}^*W_\infty^{n*} = S_{\infty 0}W^nW^{n*}S_{\infty 0}^* \geq S_{\infty 0}W^{n+1}(W^{*})^{n+1}S_{\infty 0}^* = P_{n+1}.
$$ 
If $n=-k\leq 0$, then 
$$
P_n = W_\infty^{*k}S_{\infty 0}S_{\infty 0}^*W_\infty^{k} = S_{\infty k}S_{\infty k}^* = S_{\infty,k+1}WW^*S_{\infty,k+1}^* \leq S_{\infty,k+1}S_{\infty,k+1}^* = P_{n-1}.
$$

\noindent $(2)$ follows from $(1)$.

\noindent $(3)$ We have to prove that $\lim_{k\to+\infty} S_{\infty k}S_{\infty k}^* = 1$, in the strong operator topology, and it suffices to prove it on the dense subset of $\ch_\infty$ spanned by $\{ S_{\infty n}\xi : n\in\bn, \xi\in\ch\}$. Let us fix $n\in\bn$, $\xi\in\ch$, and compute, for $k>n$, $S_{\infty k}S_{\infty k}^*S_{\infty n}\xi = S_{\infty k}S_{\infty k}^*S_{\infty k}S_{kn}\xi = S_{\infty k}S_{kn}\xi = S_{\infty n}\xi$, and the thesis follows.

\noindent $(4)$  Let us first prove that $\ppinf(x) P_0 = P_0 \ppinf(x)$ for $x \in \ca_\infty$. It suffices to show the equality for $x\in \{ \f_{\infty n}(a) : n\in\bn,a\in\ca\}$. We have, from equation \eqref{allaccia},
\begin{align*}
	\ppinf \circ \f_{\infty n}(a)P_0 & = \ppinf\circ \f_{\infty n}(a)S_{\infty 0}S_{\infty 0}^* = \ppinf\circ \f_{\infty n}(a)S_{\infty n}W^nS_{\infty 0}^* \\
	& = S_{\infty n} \piA(a) W^n W^{*n} S_{\infty n}^*  = S_{\infty n} W^n W^{*n} \piA(a) S_{\infty n}^* \\
	& = S_{\infty 0}  W^{*n}  S_{\infty n}^* \ppinf \circ \f_{\infty n}(a)  =  P_0 \ppinf \circ \f_{\infty n}(a).
\end{align*}
Then, for any $x \in \ca_\infty$, $k\in\bz$, 
\begin{align*}
	\ppinf(x) P_k & = \ppinf(x)W_\infty^kP_0W_\infty^{*k} = W_\infty^k\ppinf(\a_\infty^{-k}(x)) P_0W_\infty^{*k} \\
	& = W_\infty^kP_0\ppinf(\a_\infty^{-k}(x)) W_\infty^{*k} = W_\infty^kP_0W_\infty^{*k} \ppinf(x) = P_k \ppinf(x).
\end{align*}
Finally, $P_{+\infty} \in \ppinf(\ca_\infty)'$, because of $(2)$.
\end{proof}


\begin{proposition} \label{prop:repC0}
There exists a representation $\rrinf$ of $C_0(\bz_\infty)$ on $\ch_\infty$, such that, for any $f\in C_0(\bz_\infty)$, 
\begin{align*}
	\rrinf(f) & \in\ppinf(\ca_\infty)', \\
	\rrinf(\betaC(f)) & = W_\infty \rrinf(f)W_\infty^*.
\end{align*}	
\end{proposition}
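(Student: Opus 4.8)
The plan is to build $\rrinf$ from a projection-valued measure on $\bz_\infty$ whose atoms are the successive differences of the projections $P_n$ produced in Proposition \ref{prop:spectralFamily}. Concretely, I would set $E_n := P_n - P_{n+1}$ for $n\in\bz$ and $E_{+\infty} := P_{+\infty}$. Since $\{P_n\}$ is decreasing (Proposition \ref{prop:spectralFamily}(1)), the $E_n$ are mutually orthogonal projections; and using $P_n\to P_{+\infty}$ as $n\to+\infty$ (part (2)) together with $P_n\to 1$ as $n\to-\infty$ (part (3)), the telescoping identity $\sum_{n=-N}^{M}E_n = P_{-N}-P_{M+1}$ shows that $\sum_{n\in\bz}E_n + E_{+\infty}=1$ in the strong operator topology. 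Thus $\{E_n:n\in\bz\}\cup\{E_{+\infty}\}$ is a resolution of the identity indexed by the points of the spectrum $\bz_\infty$, i.e. a projection-valued measure on $\bz_\infty$ (whose only non-isolated point is $+\infty$, matching the $C_0$ vanishing at $-\infty$).

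I would then define $\rrinf(f) := \sum_{n\in\bz} f(n)E_n + f(+\infty)E_{+\infty}$, where $f(+\infty):=\lim_{m\to+\infty}f(m)$ is the value of $f$ at the point $+\infty$ of the spectrum. Because the $E_n$ are orthogonal and $f$ is bounded, this series converges strongly and defines a bounded operator of norm $\le\|f\|_\infty$; orthogonality of the atoms makes $f\mapsto\rrinf(f)$ both multiplicative and $*$-preserving, so $\rrinf$ is a representation of $C_0(\bz_\infty)$ on $\ch_\infty$ (equivalently $\rrinf(f)=\int_{\bz_\infty}f\,dE$ for the projection-valued measure above).

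For the first required property, I would invoke Proposition \ref{prop:spectralFamily}(4): every $P_n$ and $P_{+\infty}$ lies in $\ppinf(\ca_\infty)'$, hence so does each $E_n$ and $E_{+\infty}$; since $\ppinf(\ca_\infty)'$ is a commutant it is closed in the strong operator topology, and therefore contains the strong limit $\rrinf(f)$. For the covariance property, the key computation is on the atoms: from $P_n=\Ad(W_\infty^n)(P_0)$ one gets $W_\infty P_n W_\infty^* = P_{n+1}$, whence $W_\infty E_n W_\infty^* = E_{n+1}$ and, passing to the strong limit, $W_\infty E_{+\infty} W_\infty^* = E_{+\infty}$. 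Since $(\betaC f)(n)=f(n-1)$ and $\lim_m(\betaC f)(m)=f(+\infty)$, re-indexing the defining series yields $\rrinf(\betaC(f)) = \sum_{n}f(n-1)E_n + f(+\infty)E_{+\infty} = \sum_{n}f(n)E_{n+1} + f(+\infty)E_{+\infty} = W_\infty\rrinf(f)W_\infty^*$.

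The only points needing care, and the main (mild) obstacle, are the strong-operator convergence of the defining series and the verification that $\rrinf$ is genuinely multiplicative; both follow routinely from the mutual orthogonality of the $E_n$ and the fact that they sum to the identity, so no genuine difficulty arises beyond the bookkeeping with the extra index $+\infty$.
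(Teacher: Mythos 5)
Your proposal is correct and follows essentially the same route as the paper: the paper's proof likewise sets $E_n := P_n - P_{n+1}$, $E_{+\infty} := P_{+\infty}$, notes that this is a spectral family satisfying $W_\infty E_n W_\infty^* = E_{n+1}$ and $W_\infty E_{+\infty} W_\infty^* = E_{+\infty}$, and defines $\rrinf(f) := \sum_{n\in\bz_\infty} f(n)E_n$ with strong-operator convergence. The only difference is that you spell out the verifications (orthogonality, telescoping to the identity, SOT-closedness of the commutant, re-indexing for covariance) which the paper leaves implicit.
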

\begin{proof}
Set $E_n:= P_n-P_{n+1}$, $n\in\bz$, $E_{+\infty}:=P_{+\infty}$. Then, $\{ E_n: n\in\bz_\infty\}$ is a spectral family on $\ch_\infty$, and $E_{n+1} = W_\infty E_n W_\infty^*$, $n\in\bn$, $E_{+\infty} = W_\infty E_{+\infty} W_\infty^*$. Define, for $f\in C_0(\bz_\infty)$, $\rrinf(f):= \sum_{n\in\bz_\infty} f(n)E_n$, where the series converges in the strong operator topology of $\cb(\ch_\infty)$. Then, $\rrinf$ is a representation of $C_0(\bz_\infty)$ on $\ch_\infty$, such that $\rrinf(f)\in\ppinf(\ca_\infty)'$, for any $f\in C_0(\bz_\infty)$, and $\rrinf(\betaC(f)) = W_\infty \rrinf(f)W_\infty^*$, $f\in C_0(\bz_\infty)$. 
\end{proof}


\begin{proposition} \label{prop:Sigma}
\begin{itemize}
\item[$(1)$] There is a unique representation $\ssinf$ of $C_0(\bz_\infty;\ca_\infty)$ on $\ch_\infty$, such that $\ssinf(a\otimes f) = \ppinf(a)\rrinf(f)$, $a\in\ca_\infty$, $f\in C_0(\bz_\infty)$. Moreover, $\ssinf(\gammaCA(g)) = W_\infty \ssinf(g)W_\infty^*$, $g\in C_0(\bz_\infty;\ca_\infty)$.
	
\item[$(2)$] There is a unique representation $\ssW$ of $C_0(\bz_\infty;\ca_\infty)\rtimes_{\gammaCA} \bz$ on $\ch_\infty$ such that $\ssW(g\d_n) = \ssinf(g)W_\infty^n$, $g\in C_0(\bz_\infty;\ca_\infty)$, $n\in\bz$.
\end{itemize}
\end{proposition}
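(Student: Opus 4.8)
The plan is to assemble Proposition \ref{prop:Sigma} entirely from the pieces prepared in the preceding propositions, since the genuine analytic content — the construction of $\ppinf$, of the spectral family, and of $\rrinf$ — is already in place. For part (1), I would first record the two facts I need about the building blocks: $\ppinf$ is a representation of $\ca_\infty$ on $\ch_\infty$ whose range, by Proposition \ref{prop:repC0}, commutes with the range of the representation $\rrinf$ of $C_0(\bz_\infty)$, which is the crucial input. Because $C_0(\bz_\infty)$ is abelian and therefore nuclear, the minimal and maximal $C^*$-tensor norms on $\ca_\infty\otimes C_0(\bz_\infty)$ coincide, so a pair of commuting representations on a common Hilbert space integrates to a representation of the tensor product. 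Concretely, under the identification $C_0(\bz_\infty;\ca_\infty)\cong\ca_\infty\otimes C_0(\bz_\infty)$ already invoked above (via \cite{Sakai}, Proposition 1.22.3), I would apply \cite{Take}, Proposition IV.4.7 — exactly as was done to produce $\ssuniv$ from $\ppuniv$ and $\rruniv$ — to obtain a representation $\ssinf$ with $\ssinf(a\otimes f)=\ppinf(a)\rrinf(f)$. Uniqueness is immediate, since elementary tensors span a dense $^*$-subalgebra.

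For the covariance relation in part (1), I would verify $\ssinf(\gammaCA(g))=W_\infty\ssinf(g)W_\infty^*$ on elementary tensors $g=a\otimes f$ and then extend by linearity and continuity. On such a $g$,
\[
\ssinf(\gammaCA(a\otimes f)) = \ssinf(\a_\infty(a)\otimes\betaC(f)) = \ppinf(\a_\infty(a))\,\rrinf(\betaC(f)),
\]
into which I substitute the two covariances already available, namely $\ppinf\circ\a_\infty=\Ad(W_\infty)\circ\ppinf$ from Proposition \ref{prop:RepIndLim} and $\rrinf(\betaC(f))=W_\infty\rrinf(f)W_\infty^*$ from Proposition \ref{prop:repC0}. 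The two inner factors $W_\infty^*W_\infty$ cancel, leaving $W_\infty\,\ppinf(a)\rrinf(f)\,W_\infty^*=W_\infty\ssinf(a\otimes f)W_\infty^*$, as desired.

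Part (2) is then purely formal. The covariance just established says precisely that $(\ch_\infty,\ssinf,W_\infty)$ is a covariant representation of the $C^*$-dynamical system $(C_0(\bz_\infty;\ca_\infty),\bz,\gammaCA)$, recalling that $W_\infty$ is unitary by Proposition \ref{prop:RepIndLim}. Passing to the integrated form \eqref{IntegratedForm} and invoking the bijection between covariant representations and representations of the crossed product (\cite{Ped}, Proposition 7.6.4) yields the unique representation $\ssW$ of $C_0(\bz_\infty;\ca_\infty)\rtimes_{\gammaCA}\bz$ with $\ssW(g\d_n)=\ssinf(g)W_\infty^n$.

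The only genuine (and mild) obstacle is the tensor-product step in part (1): one must justify that commuting representations of the two factors really do integrate to a representation of the $C^*$-tensor product, which is exactly where the nuclearity of $C_0(\bz_\infty)$ enters. Everything else is bookkeeping with the covariances already proved, and part (2) is a direct appeal to the standard universal property of the crossed product.
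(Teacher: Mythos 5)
Your proposal is correct and follows essentially the same route as the paper, whose entire proof consists of citing \cite{Take}, Proposition IV.4.7 for part (1) and \cite{Ped}, Proposition 7.6.4 for part (2); you simply make explicit the routine details (the commutation input from Proposition \ref{prop:repC0}, the covariance check on elementary tensors via Propositions \ref{prop:RepIndLim} and \ref{prop:repC0}, and the passage to the integrated form) that the paper leaves implicit. Nothing in your argument deviates from the paper's intended reasoning, and the covariance verification you spell out mirrors the computation already done for $\sscov$ in Proposition \ref{prop:reprOfCrossed}.
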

\begin{proof}
$(1)$ This follows from \cite{Take},  Proposition IV.4.7.
	
\noindent $(2)$ This follows from \cite{Ped}, Proposition 7.6.4.
\end{proof}


Let us set $\pi_\cc := \ssW \circ \chi^{-1}$, which is a representation of $\cc$ on $\ch_\infty$.

\begin{proposition} \label{prop:repr}
\begin{itemize}

	\item[$(1)$]  $\pi_\cc( x ) = \ppW( x )$, for all $x\in\ca_\infty \rtimes_{\a_\infty} \bz \equiv \langle\, \ppcov(\ca_\infty), \uu \,\rangle$, 
	
	\item[$(2)$] $\pi_\cc(p) = P_0 = S_{\infty 0}S_{\infty 0}^*$.
	\end{itemize}

\end{proposition}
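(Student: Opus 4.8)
The plan is to unwind everything through the three layers of definitions, reducing each claim to the action of $\ssinf$ and $W_\infty$ on elementary tensors. Throughout I would use the four identities already available: $\sscov(a\otimes f)=\ppcov(a)\rrcov(f)$ and $\ssinf(a\otimes f)=\ppinf(a)\rrinf(f)$ (Proposition~\ref{prop:reprOfCrossed}(2) and Proposition~\ref{prop:Sigma}(1)), together with $\chi(g\d_n)=\sscov(g)\uu^n$ and $\ssW(g\d_n)=\ssinf(g)W_\infty^n$. Note first that $\ppinf$ is unital (equivalently, the given $\piA$ is), so that $\ppinf(1)=1$ will drop out of both computations.

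For part $(2)$ the key observation is that $p$ is an honest element of $\cc$. Let $e:=\mathbf{1}_{\{n\ge0\}}\in C_0(\bz_\infty)$ be the indicator of the non-negative part, which indeed lies in $C_0(\bz_\infty)$ since it vanishes at $-\infty$ and has limit $1$ at $+\infty$. Comparing the formulas for $\rrcov$ and $p$ in \eqref{eq:pProjection} gives $p=\rrcov(e)=\sscov(1\otimes e)=\chi\big((1\otimes e)\d_0\big)$, whence
\[
\pi_\cc(p)=\ssW\big((1\otimes e)\d_0\big)=\ssinf(1\otimes e)=\ppinf(1)\rrinf(e)=\rrinf(e).
\]
It then remains to evaluate $\rrinf(e)=\sum_{n\in\bz_\infty}e(n)E_n=\sum_{n\ge0}E_n+E_{+\infty}$ in the strong topology; telescoping $E_n=P_n-P_{n+1}$ yields $\sum_{n\ge0}E_n=P_0-P_{+\infty}$, and adding $E_{+\infty}=P_{+\infty}$ gives $\rrinf(e)=P_0=S_{\infty0}S_{\infty0}^*$, which is the assertion.

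For part $(1)$ the obstacle is that $\ppcov(a)$ and $\uu$ (hence all of $\ca_\infty\rtimes_{\a_\infty}\bz=\langle\ppcov(\ca_\infty),\uu\rangle$) do not decay at $-\infty$ and therefore lie only in $M(\cc)$, not in $\cc$; so the equality must be read through the extension of $\pi_\cc$ to multipliers. I would first record that $\pi_\cc$ is non-degenerate: along an approximate unit $(f_\lambda)$ of $C_0(\bz_\infty)$, say $f_\lambda=\mathbf{1}_{\{n\ge-\lambda\}}$, one has $\ssinf(1\otimes f_\lambda)=\rrinf(f_\lambda)=P_{-\lambda}\to1$ strongly, so $\pi_\cc$ extends to a representation $\ov{\pi_\cc}$ of $M(\cc)$ that is continuous from the strict to the strong topology on bounded sets. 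Then I would compute, for $a\in\ca_\infty$,
\[
\pi_\cc\big(\ppcov(a)\rrcov(f_\lambda)\big)=\pi_\cc\big(\sscov(a\otimes f_\lambda)\big)=\ssinf(a\otimes f_\lambda)=\ppinf(a)\rrinf(f_\lambda),
\]
and, using $\uu\rrcov(f)\uu^*=\rrcov(\betaC(f))$ so that $\uu\rrcov(f_\lambda)=\sscov(1\otimes\betaC(f_\lambda))\uu=\chi\big((1\otimes\betaC(f_\lambda))\d_1\big)$ together with $\rrinf(\betaC(f))=W_\infty\rrinf(f)W_\infty^*$,
\[
\pi_\cc\big(\uu\rrcov(f_\lambda)\big)=\ssinf(1\otimes\betaC(f_\lambda))W_\infty=\rrinf(\betaC(f_\lambda))W_\infty=W_\infty\rrinf(f_\lambda).
\]
Since $\rrcov(f_\lambda)$ is an approximate unit of $\cc$, we have $\ppcov(a)\rrcov(f_\lambda)\to\ppcov(a)$ and $\uu\rrcov(f_\lambda)\to\uu$ strictly, while $\rrinf(f_\lambda)\to1$ strongly; passing to the limit through $\ov{\pi_\cc}$ on these bounded nets gives $\ov{\pi_\cc}(\ppcov(a))=\ppinf(a)$ and $\ov{\pi_\cc}(\uu)=W_\infty$. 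As $\ov{\pi_\cc}$ is a $*$-homomorphism, it therefore agrees on the whole of $\ca_\infty\rtimes_{\a_\infty}\bz$ with the integrated form of the covariant pair $(\ppinf,W_\infty)$, namely with $\ppW=\ppinf\rtimes W_\infty$, which proves $(1)$.

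The only delicate point is the multiplier/strict-continuity step in part $(1)$; once $\pi_\cc$ is known to be non-degenerate and extended to $M(\cc)$, everything reduces to the elementary-tensor identities above, and part $(2)$ is a direct telescoping computation requiring no such extension.
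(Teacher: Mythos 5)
Your proof is correct and follows essentially the same route as the paper's: both identify $p=\rrcov(\mathbf{1}_{\{n\geq 0\}})$ so that $\pi_\cc(p)=\rrinf(\mathbf{1}_{\{n\geq 0\}})=P_0$, and both prove $(1)$ by evaluating $\pi_\cc$ on the products $\ppcov(a)\rrcov(e_\lambda)\uu^k$ against an approximate unit $(e_\lambda)$ of $C_0(\bz_\infty)$ and passing to the limit via the strong convergence $\rrinf(e_\lambda)\to 1$. The only difference is one of bookkeeping: you make explicit the multiplier-algebra/strict-continuity justification (including the non-degeneracy of $\pi_\cc$, via $\rrinf(e_\lambda)=P_{-\lambda}\to 1$, and the unitality of $\ppinf$) that the paper leaves implicit when it writes $\pi_\cc(\ppcov(a)\uu^k)=\lim_n\pi_\cc(\ppcov(a)\rrcov(e_n)\uu^k)$.
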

\begin{proof}
It follows from Proposition \ref{prop:reprOfCrossed} that, for $\sum_{n\in\bz} (a_n\otimes f_n) \dd_n \in C_c( C_0(\bz_\infty;\ca_\infty), \bz, \gammaCA)$, we have $\chi( \sum_{n\in\bz} (a_n\otimes f_n) \dd_n ) = \sum_{n\in\bz} \ppcov(a_n)\rrcov(f_n)u^n$, so that 
\begin{align*}
	\pi_\cc \bigg( \sum_{n\in\bz} \ppcov(a_n)\rrcov(f_n) \uu^n \bigg) & = \ssW \bigg( \sum_{n\in\bz} (a_n\otimes f_n) \dd_n \bigg) \\
	& = \sum_{k\in\bz} \ssinf( a_n \otimes f_n) W_\infty^n = \sum_{k\in\bz} \ppinf( a_n) \rrinf( f_n) W_\infty^n.
\end{align*}

\noindent $(1)$ Indeed, with $\set{e_n:n\in\bn}$ an approximate unit of $C_0(\bz_\infty)$, we get, for all $a\in\ca_\infty$, $k\in\bz$, 
\begin{align*}
	\pi_\cc( \ppcov(a) \uu^k ) & = \lim_{n\to\infty}  \pi_\cc( \ppcov(a)\rrcov(e_n) \uu^k)      = \lim_{n\to\infty}  \ppinf(a) \rrinf(e_n)W_\infty^k \\
	& = \ppinf(a)W_\infty^k = \ppW( \ppcov(a) \uu^k ),
\end{align*}
and the thesis follows.

\noindent $(2)$ If $f(n) = 
\begin{cases}
	0, & n<0,\\
	1, & n\geq 0,
\end{cases}\ $ then $\widehat{\chi}(p)  = \widehat{\chi}(\rrcov(f))  =  \rrinf(f) = P_0$.
\end{proof}


Let us still denote by $\pi_\cc$ the restriction of $\pi_\cc$ to the subalgebra $C^*(\ca,\a,\bn) \equiv \langle\, p\ppcov(\ca_\infty)p, p\uu p \,\rangle$ of $\cc \equiv \langle\, \ppcov(\ca_\infty), \rrcov(C_0(\bz_\infty)), \uu \,\rangle$.

\begin{theorem} \label{prop:crossedProd}
	$C^*(\ca,\a,\bn)$ satisfies all the properties in Definition \ref{1.1}, namely  is  the crossed product of $\ca$ with $\bn$ by $\a$. 
\end{theorem}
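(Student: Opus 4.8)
The plan is to dispose of properties $(1)$--$(3)$ of Definition \ref{1.1} at once—$(1)$ holds by the very definition of $C^*(\ca,\a,\bn)$ as $\langle\,\zci_\ca(\ca),t\,\rangle$, while $(2)$ and $(3)$ are precisely parts $(1)$ and $(2)$ of Proposition \ref{prop:generate}—and to concentrate all the effort on the universal property $(4)$. Here the whole apparatus assembled above comes into play: starting from an arbitrary covariant representation $(\ch,\piA,W)$ of $(\ca,\a)$, Proposition \ref{prop:RepIndLim} produces the covariant representation $(\ch_\infty,\ppinf,W_\infty)$ of $(\ca_\infty,\a_\infty)$ together with the intertwining isometries $S_{\infty n}$; Propositions \ref{prop:repC0} and \ref{prop:Sigma} upgrade this to the representation $\ssW$ of $C_0(\bz_\infty;\ca_\infty)\rtimes_{\gammaCA}\bz$ on $\ch_\infty$; and composing with $\chi^{-1}$ yields the representation $\pi_\cc$ of $\cc$ on $\ch_\infty$ recorded in Proposition \ref{prop:repr}. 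The candidate for the sought representation is then the compression $\wh{\piA}(\cdot):=S_{\infty 0}^*\,\pi_\cc(\cdot)\,S_{\infty 0}$, restricted to the subalgebra $C^*(\ca,\a,\bn)\subset\cc$.

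The crux—and the only genuinely delicate point—is that this compression is a $^*$-homomorphism and not merely a completely positive map. This is exactly where Proposition \ref{prop:repr}$(2)$ is decisive: since $p=t^*t$ lies in $C^*(\ca,\a,\bn)$ and acts as its unit, every $x\in C^*(\ca,\a,\bn)$ satisfies $x=xp$, whence $\pi_\cc(x)=\pi_\cc(x)\pi_\cc(p)=\pi_\cc(x)P_0$, with $P_0=S_{\infty 0}S_{\infty 0}^*$ precisely the range projection of the compressing isometry $S_{\infty 0}$ (recall $S_{\infty 0}^*S_{\infty 0}=\id_\ch$ because $W$ is an isometry). Consequently, for $x,y\in C^*(\ca,\a,\bn)$,
\begin{align*}
\wh{\piA}(x)\wh{\piA}(y)=S_{\infty 0}^*\,\pi_\cc(x)\,P_0\,\pi_\cc(y)\,S_{\infty 0}=S_{\infty 0}^*\,\pi_\cc(x)\pi_\cc(y)\,S_{\infty 0}=\wh{\piA}(xy),
\end{align*}
so $\wh{\piA}$ is multiplicative; as it is plainly linear, bounded and $^*$-preserving, it is a representation.

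It then remains only to identify $\wh{\piA}$ on the generators, using $P_0S_{\infty 0}=S_{\infty 0}$ throughout. From Proposition \ref{prop:repr}$(1)$ one has $\pi_\cc(\ppcov(x))=\ppinf(x)$ for $x\in\ca_\infty$, so $\pi_\cc(\zci_\ca(a))=\pi_\cc(\ppcov\circ\f_{\infty 0}(a))\pi_\cc(p)=\ppinf\circ\f_{\infty 0}(a)\,P_0$, giving $\wh{\piA}(\zci_\ca(a))=S_{\infty 0}^*\,\ppinf\circ\f_{\infty 0}(a)\,S_{\infty 0}$; the intertwining relation $\ppinf\circ\f_{\infty 0}(a)\,S_{\infty 0}=S_{\infty 0}\piA(a)$ of Proposition \ref{prop:RepIndLim} then collapses this to $\piA(a)$. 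Similarly, since $\pi_\cc(\uu)=W_\infty$ and $t=\uu p$, one gets $\wh{\piA}(t)=S_{\infty 0}^*W_\infty S_{\infty 0}=W$ via $W_\infty S_{\infty 0}=S_{\infty 0}W$. Finally $\wh{\piA}$ is non-degenerate because it is unital, $\wh{\piA}(p)=S_{\infty 0}^*P_0S_{\infty 0}=\id_\ch$, which establishes $(4)$ and completes the identification of $C^*(\ca,\a,\bn)$ with the universal crossed product. The main obstacle is thus not the verification of the intertwining identities, which are routine once the diagram of representations is in place, but rather the recognition that $\pi_\cc(p)$ coincides with the range projection $S_{\infty 0}S_{\infty 0}^*$—this is what forces the compression to be genuinely multiplicative, and it is the reason the lengthy detour through $C_0(\bz_\infty;\ca_\infty)\rtimes_{\gammaCA}\bz$ was needed.
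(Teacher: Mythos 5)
Your proof is correct and follows the same global route as the paper: properties $(1)$--$(3)$ are dispatched exactly as in the paper via Proposition \ref{prop:generate}, and for $(4)$ you assemble the same representation $\pi_\cc$ through Propositions \ref{prop:RepIndLim}, \ref{prop:repC0}, \ref{prop:Sigma}, \ref{prop:repr}, and compress by the isometry $S_{\infty 0}$. The one genuine local difference lies in how you justify that the compression $\wh{\piA}=S_{\infty 0}^*\pi_\cc(\cdot)S_{\infty 0}$ is multiplicative. The paper proves that $P_0=S_{\infty 0}S_{\infty 0}^*$ belongs to the commutant $\pi_\cc(C^*(\ca,\a,\bn))'$ by a generator-by-generator check (using Proposition \ref{prop:generate}) that $\pi_\cc(t)$, $\pi_\cc(t^*)$ and $\pi_\cc(\zci_\ca(a))$ leave the subspace $S_{\infty 0}\ch$ invariant, computing on vectors. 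You instead observe that $p=t^*t=\zci_\ca(1)$ lies in $C^*(\ca,\a,\bn)$ and is its unit, so $x=xp=px$ for every $x$ in the algebra, whence $\pi_\cc(x)=\pi_\cc(x)P_0=P_0\pi_\cc(x)$ directly from Proposition \ref{prop:repr}$(2)$; this yields the same commutation relation, and hence multiplicativity, by a purely algebraic two-line argument with no vector computation. Both arguments rest on the identification $\pi_\cc(p)=P_0$, so the detour through $C_0(\bz_\infty;\ca_\infty)\rtimes_{\gammaCA}\bz$ is equally essential in both; your version is shorter and makes more transparent why that identification is the crux. The remaining verifications ($\wh{\piA}(t)=W$, $\wh{\piA}\circ\zci_\ca=\piA$) coincide with the paper's, and your explicit check of unitality $\wh{\piA}(p)=\id_\ch$ settles the non-degeneracy required by Definition \ref{1.1}$(4)$, a point the paper leaves implicit.
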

\begin{proof}
As it was already noticed, property $(1)$ in Definition \ref{1.1} follows by definition, while properties $(2)$ and $(3)$  have been proved in Proposition \ref{prop:generate}. 

It remains to prove property $(4)$. Let $(\ch,\piA,W)$ be a covariant representation of $(A,\a)$, and recall from Proposition \ref{prop:RepIndLim} that there exist $W_\infty\in\cu(\ch_\infty)$, and a covariant representation $(\ch_\infty, \ppinf, W_\infty)$ of $(\ca_\infty,\a_\infty)$, on  $\ch_\infty \equiv \varinjlim H_n$, the Hilbert space inductive limit of the inductive system \eqref{HilbertIndLim},
such that $\ppinf\circ \f_{\infty n}(a) S_{\infty n}  = S_{\infty n} \piA(a)$, for all $n\in\bn$, $a\in\ca$, and $W_\infty S_{\infty 0} = S_{\infty 0} W$. 

Let $\pi_\cc$ be the representation  of $C^*(\ca,\a,\bn)$ on $\ch_\infty$ constructed in Proposition \ref{prop:repr}.
Let us now prove that $P_0\in \pi_\cc(C^*(\ca,\a,\bn))'$, that is $\pi_\cc(C^*(\ca,\a,\bn)) S_{\infty 0}\ch \subset S_{\infty 0}\ch$. Because of Proposition \ref{prop:generate} it is enough to prove that $\pi_\cc(t)S_{\infty 0}\ch \subset S_{\infty 0}\ch$, $\pi_\cc(t^*)S_{\infty 0}\ch \subset S_{\infty 0}\ch$, and $\pi_\cc(\zci_\ca(a))S_{\infty 0}\ch \subset S_{\infty 0}\ch$, for all $a\in\ca$. Indeed, for all $a\in\ca$, $\xi\in\ch$, we have
\begin{align*}
	\pi_\cc(t)S_{\infty 0}\xi & = \pi_\cc(p\uu p)S_{\infty 0}\xi = P_0W_\infty P_0S_{\infty 0}\xi \in S_{\infty 0}\ch, \\
	\pi_\cc(t^*)S_{\infty 0}\xi & = \pi_\cc(p\uu^* p)S_{\infty 0}\xi = P_0W_\infty^* P_0S_{\infty 0}\xi \in S_{\infty 0}\ch, \\
	\pi_\cc( \zci_\ca(a) )S_{\infty 0}\xi & = \pi_\cc\circ \ppcov \circ \f_{\infty 0} (a) P_0S_{\infty 0}\xi =  \ppinf \circ \f_{\infty 0} (a) S_{\infty 0}\xi \\
	& = S_{\infty 0} \piA(a)\xi \in S_{\infty 0}\ch.
\end{align*}
Recall from the proof of Proposition \ref{prop:RepIndLim} that there is a representation $\piAinf_0$ of $\ca$ on $\ch_\infty$ such that $\piAinf_0(a)S_{\infty 0} = S_{\infty 0}\piA(a)$, $a\in\ca$, and $\ppinf\circ\f_{\infty 0} = \piAinf_0$. Finally, define 
\begin{align*}
	\widehat{\piA}(x):= S_{\infty 0}^* \pi_\cc(x)S_{\infty 0},\quad x\in C^*(\ca,\a,\bn),
\end{align*}
which is a representation of $C^*(\ca,\a,\bn)$ on $\ch$, because $P_0\in \pi_\cc(C^*(\ca,\a,\bn))'$. Then, 
\begin{align*}
	\widehat{\piA}(t) & = S_{\infty 0}^* \pi_\cc(t)S_{\infty 0} = S_{\infty 0}^* P_0W_\infty P_0S_{\infty 0} = S_{\infty 0}^* W_\infty S_{\infty 0} \\
	& =  S_{\infty 0}^* S_{\infty 0}W = W, 
\end{align*}
and, for all $a\in\ca$, 
\begin{align*}
	\widehat{\piA}(\zci_\ca(a)) & = S_{\infty 0}^* \pi_\cc( \ppcov \circ \f_{\infty 0} (a) p) S_{\infty 0} = S_{\infty 0}^* \ppinf \circ   \f_{\infty 0} (a) S_{\infty 0}S_{\infty 0}^* S_{\infty 0} \\
	& = S_{\infty 0}^* \piAinf_0 (a) S_{\infty 0} = S_{\infty 0}^* S_{\infty 0}\piA(a) = \piA(a).
\end{align*}
\end{proof}

\subsection{An example: the noncommutative torus}
As mentioned in the introduction, the crossed product $\ca \rtimes_\a \bn$ given in Definition \ref{1.1}  coincides with   a reduction by a projection of the ordinary crossed product when $\alpha$ is an automorphism. 
We now give two equivalent descriptions of $\ca \rtimes_\a \bn$, when $\ca=C(\br/\bz)$ and $\alpha$ is a rotation by $2\pi \theta$, where $\theta$ is irrational.

The first description is the following.
As it is known, the noncommutative torus $A_\theta$ can be described as the crossed product $C(\br/\bz)\rtimes_{\alpha_\theta}\bz$, where $(\alpha_\theta(f))(t)=f(t-\theta)$. Given the Hilbert space $H=\ell^2(\bz,L^2(\br/\bz))$,  the representation $\pi:C(\br/\bz)\to \cb(H)$, $(\pi(f)\xi)(n)=\alpha_\theta^{-n}(f)\xi(n)$ and the unitary $V$ acting on $H$ as $(V\xi)(n)=\xi(n-1)$,  $A_\theta$ can be identified with the $C^*$-algebra generated by $V$ and $\pi(C(\br/\bz))$ on the Hilbert space $H$. Since $C(\br/\bz)$ is generated as a $C^*$-algebra by the unitary $U_0=\exp(2\pi i t)$, $A_\theta$ is generated by the unitary $V$ and the unitary $U$ given by $(U\xi)(n)=\exp(2\pi i n\theta)U_0\xi(n)$.
It is easy to check that $UV=\exp(2\pi i\theta)VU$.

Since $\alpha_\theta$ is an automorphism,  Theorem \ref{prop:crossedProd} implies that $C(\br/\bz)\rtimes_{\alpha_\theta}\bn$ is the reduction of $C(\br/\bz)\rtimes_{\alpha_\theta}\bz$ by the projection $p$ on the Hilbert space $H_+=\ell^2(\bn_0,L^2(\br/\bz))$.
We have proved the following theorem.
\begin{theorem}
The C$^*$-algebra $C(\br/\bz)\rtimes_{\alpha_\theta}\bn$ can be identified with the $C^*$-algebra generated by the unitary $U$ and the isometry $pVp$ acting on $H_+$.
\end{theorem}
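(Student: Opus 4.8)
The plan is to reduce the statement to an elementary computation inside a corner of the (faithfully represented) rotation algebra. By the paragraph preceding the theorem, which is a consequence of Theorem~\ref{prop:crossedProd} together with the amenability of $\bz$ (so that the regular representation on $H=\ell^2(\bz,L^2(\br/\bz))$ faithfully realizes $A_\theta=\ca_\infty\rtimes_{\alpha_\infty}\bz$, here with $\ca_\infty=\ca=C(\br/\bz)$ and $\alpha_\infty=\alpha_\theta$), the algebra $C(\br/\bz)\rtimes_{\alpha_\theta}\bn$ is identified with the reduction of $A_\theta$ by the non-negative frequency projection $p$, that is, with the $C^*$-subalgebra $B:=C^*\big(\{\,pxp:x\in A_\theta\,\}\big)$ of $\cb(H_+)$. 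One must read this reduction as the \emph{generated} $C^*$-algebra, since $p\notin A_\theta$ and the set $\{pxp\}$ is not closed under multiplication. Writing $C:=C^*\big(U|_{H_+},\,pVp\big)$, the goal is then to prove $B=C$.

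First I would record the two structural facts that make everything work. The unitary $U$ is diagonal with respect to the $\bz$-grading of $H$, hence commutes with $p$; thus $pU=Up$ and $U|_{H_+}=pUp$ is a genuine unitary on $H_+$. The unitary $V$ is the right shift, so it maps $H_+$ into itself; consequently $Vp=pVp=:s$, and $s^*s=pV^*Vp=p$, so $s=pVp$ is an isometry on $H_+$ (a properly truncated shift, since $ss^*=VpV^*\neq p$). Both $U|_{H_+}$ and $pVp$ are compressions of elements of $A_\theta$, whence $C\subseteq B$.

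For the reverse inclusion $B\subseteq C$, since $\{\,U^mV^n:m,n\in\bz\,\}$ spans a dense $*$-subalgebra of $A_\theta$ and the compression map $x\mapsto pxp$ is norm-continuous, it suffices to show $pU^mV^np\in C$ for all $m,n\in\bz$. Using $pU^m=U^mp$ and $pU^m(1-p)=U^mp(1-p)=0$, one gets $pU^mV^np=(pU^mp)(pV^np)=(U|_{H_+})^m\,(pV^np)$. For $n\geq0$ the shift $V^n$ preserves $H_+$, so $pV^np=V^np=s^n$; for $n<0$ one takes adjoints, $pV^np=(pV^{-n}p)^*=(s^{-n})^*=(s^*)^{-n}$. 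In either case $pU^mV^np\in C$. By density and continuity every compression $pxp$ lies in $C$, hence $B=C^*\big(\{pxp\}\big)\subseteq C$, and therefore $B=C$, which is the assertion.

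The only genuinely delicate point is the first step: the identification of the abstract universal object $\ca\rtimes_{\alpha_\theta}\bn$ of Theorem~\ref{prop:crossedProd}, built there on the universal representation of $\ca_\infty$, with the concrete corner $B$ sitting on $H_+$. This rests on the uniqueness of the universal object and on the amenability of $\bz$, which lets one run the construction with the faithful regular representation on $L^2(\br/\bz)$ in place of the universal one. Once this is in place, the remaining content is the elementary corner computation above, whose only subtleties are that $\{pxp\}$ is not multiplicatively closed (handled by passing to the generated $C^*$-algebra) and the case $n<0$ (handled by taking adjoints).
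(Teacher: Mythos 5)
Your proposal is correct and follows essentially the same route as the paper: identify $A_\theta$ with $C(\br/\bz)\rtimes_{\alpha_\theta}\bz$ in its regular representation on $H$, then invoke Theorem \ref{prop:crossedProd} (here $\ca_\infty=\ca$ and $\alpha_\infty=\alpha_\theta$ because $\alpha_\theta$ is an automorphism) to realize $C(\br/\bz)\rtimes_{\alpha_\theta}\bn$ as the reduction of $A_\theta$ by $p$ acting on $H_+$. The two points you work out by hand are exactly what the paper obtains from its general machinery: your computation with the compressions $pU^mV^np$ is Proposition \ref{prop:generate}(3) specialized to the torus, where $\zci_\ca(\ca)=C^*(U|_{H_+})$ since $U_0$ generates $C(\br/\bz)$ and $t=pVp$, while the representation-independence you flag (via amenability of $\bz$ and uniqueness of the universal object) is the implicit content of the paper's phrase ``the reduction of $C(\br/\bz)\rtimes_{\alpha_\theta}\bz$ by the projection $p$''.
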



We now provide a description of  $\ca \rtimes_\a \bn$ as a universal object.
\begin{theorem}
The $C^*$-algebra $C(\br/\bz)\rtimes_{\alpha_\theta}\bn$ coincides with the universal $C^*$-algebra generated by a unitary $U$ and an isometry $V$ satisfying the conditions $UV=\exp(2\pi i\theta)VU$. 
\end{theorem}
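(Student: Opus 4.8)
The plan is to exhibit $C(\br/\bz)\rtimes_{\alpha_\theta}\bn$ and the claimed universal algebra as isomorphic by producing mutually inverse surjections. Write $B$ for the universal $C^*$-algebra generated by a unitary $u$ and an isometry $v$ subject to $uv=e^{2\pi i\theta}vu$, and recall from the preceding theorem that $C(\br/\bz)\rtimes_{\alpha_\theta}\bn$ is generated on $H_+$ by the unitary $U$ and the isometry $pVp$.

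For the first direction I would check that the concrete generators satisfy the defining relation. Since $U$ acts diagonally in the $\bz$-grading of $H$ (it preserves each summand indexed by $n\in\bz$), it commutes with $p$, so from $UV=e^{2\pi i\theta}VU$ one gets $U(pVp)=pUVp=e^{2\pi i\theta}pVUp=e^{2\pi i\theta}(pVp)U$, the outer equalities using $Up=pU$. By the universal property of $B$ there is a $*$-homomorphism $\Phi\colon B\to C(\br/\bz)\rtimes_{\alpha_\theta}\bn$ with $\Phi(u)=U$ and $\Phi(v)=pVp$; it is surjective because $U$ and $pVp$ generate the target by the preceding theorem.

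The substantive direction builds the inverse from the universal property of the crossed product, Definition \ref{1.1}$(4)$. Represent $B$ faithfully on a Hilbert space $K$; identifying $C(\br/\bz)\cong C^*(\bz)$ as the universal $C^*$-algebra on one unitary, the element $u$ determines a representation $\pi_B$ of $C(\br/\bz)$ on $K$ with $\pi_B(U_0)=u$. The point is that $(K,\pi_B,v)$ is a covariant representation in the sense of Definition \ref{Def:CovRep}. The identity $\pi_B(\alpha_\theta(a))v=v\pi_B(a)$ need only be checked on $U_0$, where $\alpha_\theta(U_0)=e^{-2\pi i\theta}U_0$ turns it into exactly $uv=e^{2\pi i\theta}vu$. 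For the second condition one must show $v^kv^{*k}\in\pi_B(\ca)'=\{u\}'$, and this is where the single relation has to do real work: from $uv=e^{2\pi i\theta}vu$ I would derive $u^*v=e^{-2\pi i\theta}vu^*$ and, by taking adjoints, $v^*u=e^{2\pi i\theta}uv^*$; iterating yields $v^{*k}u=e^{2\pi ik\theta}uv^{*k}$ and $uv^k=e^{2\pi ik\theta}v^ku$, whence
\[
u\,v^kv^{*k}=e^{2\pi ik\theta}\,v^k u v^{*k}=v^kv^{*k}\,u ,
\]
so each $v^kv^{*k}$ commutes with $u$, hence with all of $\pi_B(\ca)$.

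Granting this, Definition \ref{1.1}$(4)$, whose realisation is provided by Theorem \ref{prop:crossedProd}, gives a representation $\widehat{\pi_B}$ of $C(\br/\bz)\rtimes_{\alpha_\theta}\bn$ on $K$ with $\widehat{\pi_B}\circ i_\ca=\pi_B$ and $\widehat{\pi_B}(t)=v$. Its image is the $C^*$-algebra generated by $\pi_B(\ca)=C^*(u)$ and $v$, namely $B$ itself, so $\Psi:=\widehat{\pi_B}$ is a surjection onto $B$ sending the image of $U_0$ to $u$ and $t$ to $v$. Since $\Phi$ and $\Psi$ then fix the respective generating sets under composition, one gets $\Phi\circ\Psi=\mathrm{id}$ and $\Psi\circ\Phi=\mathrm{id}$, yielding the desired isomorphism. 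The main obstacle is precisely the commutation $v^kv^{*k}\in\{u\}'$: without it $(K,\pi_B,v)$ fails to be a covariant representation and the universal property of the crossed product cannot be invoked, whereas everything else is bookkeeping with the two universal properties.
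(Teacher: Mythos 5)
Your proposal is correct and follows essentially the same route as the paper: both directions are run off the two universal properties (that of $B$ and Definition \ref{1.1}), with the crux in both being that the single relation $uv=e^{2\pi i\theta}vu$ forces $v^kv^{*k}\in\{u\}'$, so that $(K,\pi_B,v)$ with $\pi_B(f)=f(u)$ is a covariant representation in the sense of Definition \ref{Def:CovRep}. The only difference is one of detail: you write out the computations that the paper leaves implicit, namely the verification that $U$ and $pVp$ satisfy the relation, the derivation of $uv^kv^{*k}=v^kv^{*k}u$, and the check that the two induced morphisms are mutually inverse on generators.
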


\begin{proof}
By definition, the universal $C^*$-algebra generated by a unitary $U$ and an isometry $V$ satisfying the conditions $UV=\exp(2\pi i\theta)VU$ 
 is the unique $C^*$-algebra $B$ satisfyng the following universal property: for any triple $(\ch,u,v)$, where $\ch$ is a Hilbert space, $u$ is a unitary and $v$ is an isometry acting on $\ch$ satisfying $uv=\exp(2\pi i\theta)vu$, there exists a representation $\pi:B\to \cb(\ch)$ such that  $\pi(U)=u$ and $\pi(V)=v$.

By definition, also $C(\br/\bz)\rtimes_{\alpha_\theta}\bn$ satisfies a universal property, given by properties $(1) - (4)$ of Definition 2.4. Therefore, given a triple  $(\ch,u,v)$ as above,  we get indeed a covariant representation $(\ch,\rho, v)$ of $(C(\br/\bz),\alpha_\theta)$, where we set $\rho(f)=f(u)$, in fact the commutation relations imply that $v^kv^{*k}u=u v^kv^{*k}$. The properties of $C(\br/\bz)\rtimes_{\alpha_\theta}\bn$ imply the thesis.
\end{proof}

\begin{remark}
If $\theta$ is rational, the projection $p$ in the first description is the identity and, therefore,  $C(\br/\bz)\rtimes_{\alpha_\theta}\bn$ coincides with 
$A_\theta$.
\end{remark}


\section{Some results on semifinite spectral triples}

In this section we discuss some generalizations of results well-known for type I spectral triples. Some of these results have already been proved in \cite{Jordans} and some are new.

First of all we recall the following definitions:


\begin{definition}
Let $(\cam,\t)$ be a von Neumann algebra with a  normal semifinite faithful (n.s.f.) trace,  $T\, \widehat{\in}\, \cam$   a self-adjoint operator\footnote{By $T\, \widehat{\in}\, \cam$ we mean that the operator $T$ is affiliated with $\cam$. Another common notation is $T\, \eta\, \cam$.}. We use the notation $e_T(\O)$ for the spectral projection of  $T$ relative to the measurable set $\O\subset \br$, $\l_t(T):=\t(e_{|T|}(t,+\infty))$, $\Lambda_t(T):=\t(e_{|T|}[0,t))$, $\mu_t(T):=\inf \{s>0: \lambda_T(s)\leq t\}$,   $t>0$. The operator $T$ is said to be $\t$-measurable if $\l_t(T) \to 0$, $t\to+\infty$, and $\t$-compact if $\m_t(T) \to 0$, $t\to+\infty$, or equivalently, $\l_t(T)<+\infty$, $\forall\; t>0$.
\end{definition}


\begin{definition} 
Let $\ca$ be a unital $C^*$-algebra. An odd semifinite spectral triple $(\cl,\ch,D; \cam,\t)$  on $\ca$, with respect to a semifinite von Neumann algebra $\cam\subset\cb(\ch)$ endowed with a n.s.f. trace $\t$,  is given by a unital, norm-dense, $^*$-subalgebra $\cl\subset\ca$, a  (separable) Hilbert space $\ch$, a faithful representation $\pi:\ca\to\cb(\ch)$ such that $\pi(\ca)\subset\cam$, and an unbounded self-adjoint operator $D\, \widehat{\in}\, \cam$ such that
\begin{itemize}
\item[$(1)$] $(1+D^2)^{-1}$ is a $\t$-compact operator, \textit{i.e.} $\l_t((1+D^2)^{-1})<+\infty$, $\forall\; t>0$ or, equivalently, $\Lambda_t(D)<+\infty$, $\forall \; t>0$,
\item[$(2)$] $\pi(a)(\dom D) \subset \dom D$, and $[D,\pi(a)] \in\cam$, for all $a\in\cl$.
\end{itemize}

\par\noindent

The spectral triple $(\cl,\ch,D; \cam,\t)$ is even if, in addition,
\begin{itemize}
\item[$(3)$] there is a  self-adjoint unitary operator (\textit{i.e.} a $\bz_2$-grading) $\G\in\cam$ such that $\pi(a)\G = \G\pi(a)$, $\forall a\in\ca$, and $D\G=-\G D$. 
\par
The spectral triple $(\cl,\ch,D; \cam,\t)$ is finitely summable if, in addition,
\item[$(4)$] there exists a $\d>0$ such that $\tau((1+D^2)^{-\d/2})<+\infty$.
\end{itemize}
\end{definition}

\begin{definition}
Given a finitely summable semifinite spectral triple $(\cl,\ch,D; \cam,\t)$, the number $d=\inf\{\a>0:\tau((1+D^2)^{-\a/2})<+\infty\}$ is called the metric or Hausdorff dimension of the triple, since it is the unique exponent, if any, such that the logarithmic Dixmier trace is finite non-zero on $(1+D^2)^{-\a/2}$ (cf. \cite{GuIs09}, Theorem 2.7).
\end{definition}

We note that the usual definition of spectral triple, which was recalled in Definition \ref{deftripla}, can be recovered by taking $\cam=\cb(\ch)$.


\begin{proposition}  \label{dimension-Lambda}
Let $(\cl,\ch,D; \cam,\t)$ be a finitely summable semifinite spectral triple. Then $d= \limsup_{t\to\infty}\frac{\log\Lambda_t(D)}{\log t}$.
\end{proposition}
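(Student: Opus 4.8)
The goal is to show the metric dimension $d$, defined via the abscissa of convergence of $\tau((1+D^2)^{-\alpha/2})$, equals the exponent governing the growth rate of the spectral counting function $\Lambda_t(D) = \tau(e_{|D|}[0,t))$. The plan is to rewrite the trace $\tau((1+D^2)^{-\alpha/2})$ as a Stieltjes integral against the (possibly infinite, but locally finite by the $\tau$-compactness assumption) spectral measure of $|D|$, and then apply a classical Abelian/Tauberian comparison between the decay of the integrand and the growth of the distribution function $\Lambda_t$.

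More concretely, first I would use functional calculus and the trace property to write
\begin{align*}
\tau\bigl((1+D^2)^{-\alpha/2}\bigr) = \int_{[0,\infty)} (1+s^2)^{-\alpha/2}\, d\Lambda_s(D),
\end{align*}
which makes sense because condition $(1)$ in the semifinite spectral triple definition guarantees $\Lambda_s(D) < +\infty$ for every $s > 0$, so $s \mapsto \Lambda_s(D)$ is a genuine nondecreasing right-continuous function of locally finite total variation. Integration by parts then recasts the integral, up to the convergent boundary/low-energy contributions, in terms of $\int^\infty \Lambda_s(D)\, s(1+s^2)^{-\alpha/2-1}\, ds$, so that the finiteness of the trace is controlled purely by the large-$s$ behaviour of $\Lambda_s(D)$ weighted by a power $\sim s^{-\alpha-1}$.

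Next I would carry out the standard two-sided estimate. Set $b := \limsup_{t\to\infty} \frac{\log \Lambda_t(D)}{\log t}$. For the inequality $d \le b$: if $\alpha > b$, then for any $\beta$ with $b < \beta < \alpha$ one has $\Lambda_s(D) \le C s^{\beta}$ for $s$ large, whence the weighted integral converges since $\int^\infty s^{\beta} s^{-\alpha-1}\, ds < \infty$; thus the trace is finite and $d \le \alpha$, giving $d \le b$ by taking $\alpha \downarrow b$. For the reverse inequality $d \ge b$: along a sequence $t_k \to \infty$ with $\Lambda_{t_k}(D) \ge t_k^{\beta}$ for $\beta < b$, the monotonicity of $\Lambda$ lets me bound the integral below on dyadic-type blocks $[t_k, 2t_k)$, forcing divergence whenever $\alpha < b$, so $\tau((1+D^2)^{-\alpha/2}) = +\infty$ for $\alpha < b$ and hence $d \ge b$. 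Combining the two gives $d = b$.

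The main obstacle is handling the semifinite (non-type-I) setting rigorously: one must be careful that $\Lambda_s(D)$ is finite and the Stieltjes integration-by-parts is legitimate even though the total trace $\tau(1)$ may be infinite and the spectrum of $|D|$ need not be discrete. The key technical point is that the $\tau$-compactness hypothesis $(1)$ forces $e_{|D|}[0,t)$ to have finite trace for each $t$, which is precisely what makes $\Lambda_t(D)$ a well-defined finite counting-type function and legitimizes treating $d$ as an abscissa of convergence exactly as in the classical Weyl-law/zeta-function comparison. Once this finiteness is in hand, the remaining estimates are the routine Abelian–Tauberian bounds sketched above.
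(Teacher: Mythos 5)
Your proof is correct, but it takes a genuinely different route from the paper. The paper's argument is a chain of citations through the generalized singular-value formalism: it first invokes Fack--Kosaki to write $\tau((1+D^2)^{-\a/2})=\int_0^{+\infty}\mu_t^\a((1+D^2)^{-1/2})\,dt$, then quotes a dimension formula from an earlier paper of two of the authors expressing $d$ as $\bigl(\liminf_{t\to\infty}\frac{\log\mu_t((1+D^2)^{-1/2})}{\log(1/t)}\bigr)^{-1}$, then passes from the rearrangement $\mu_t$ to the distribution function $\l_s$ via another cited proposition, and finally reaches $\Lambda_t(D)$ by definition and simple estimates. You instead bypass $\mu_t$ entirely: you write the trace as the Stieltjes integral $\int_{[0,\infty)}(1+s^2)^{-\a/2}\,d\Lambda_s(D)$ (legitimate by normality of $\t$ and the spectral theorem, with local finiteness of $\Lambda$ supplied by $\t$-compactness), integrate by parts, and run a two-sided abscissa-of-convergence argument on $\int^\infty\Lambda_s(D)\,s(1+s^2)^{-\a/2-1}\,ds$. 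Both directions of your estimate are sound: for convergence the bound $\Lambda_s\le Cs^\b$ with $\b<\a$ kills both the weighted integral and the boundary term, and for divergence the truncated integration-by-parts identity (dropping the nonnegative boundary term at $T$) gives $\tau((1+D^2)^{-\a/2})\ge\a\int_0^T\Lambda_s\,s(1+s^2)^{-\a/2-1}\,ds$, so your block estimate $\ge c_\a t_k^{\b-\a}\to\infty$ forces the trace to be infinite. What each approach buys: the paper's proof is shorter and situates $d$ within a family of equivalent dimension formulas (in terms of $\mu_t$, $\l_s$, $\Lambda_t$) that are reused elsewhere, at the cost of depending on three external results; yours is self-contained, needs only standard facts about traces of spectral functions, and makes transparent that the proposition is at bottom an elementary Abelian comparison between the convergence abscissa of a zeta-type integral and the growth exponent of the counting function. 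Do make the boundary-term bookkeeping explicit (you only gesture at it), since dropping or retaining $f(T)\Lambda_T$ is exactly what makes each of the two directions work.
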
  
\begin{proof}
We first observe that, by \cite{FaKo}, Proposition 2.7,
$$
 \tau((1+D^2)^{-\a/2})=\int_0^{+\infty}\mu_t((1+D^2)^{-\a/2})dt
 = \int_0^{+\infty}\mu^\a_t((1+D^2)^{-1/2})dt.
$$ 
Therefore,
\begin{align*}
 d&=\left(\liminf_{t\to\infty}\frac{\log\m_t((1+D^2)^{-1/2})}{\log(1/t)}\right)^{-1}
 =\limsup_{s\to0}\frac{\log\lambda_s((1+D^2)^{-1/2})}{\log(1/s)}\\
 &=\limsup_{t\to\infty}\frac{\log\Lambda_t((1+D^2)^{1/2})}{\log t}
 =\limsup_{t\to\infty}\frac{\log\Lambda_t(D)}{\log t},
\end{align*}
where the first equality follows by  \cite{GuIs09} Theorem 1.4, the second by \cite {GuIs05} Proposition 1.13, the third by definition of $\Lambda$, the last by simple estimates.
\end{proof}


\subsection{The case of the tensor product}
 
Let us recall the definition of tensor product of semifinite spectral triples.  
 
\begin{definition}
Let $\ca_1,\ca_2$ be unital $C^*$-algebras, with respective semifinite spectral triples $\ct_1:=(\cl_1,\ch_1,D_1,\G_1;\cam_1,\t_1)$, $\ct_2:=(\cl_2,\ch_2,D_2,\G_2;\cam_2,\t_2)$, and define $\ct_1\times \ct_2 \equiv (\cl,\ch,D,\G;\cam,\t)$ as follows:
\\
 if $\ct_1$  and $\ct_2$ are both even
$$\begin{array}{ccc}
\cl:=\cl_1 \odot \cl_2, &
\ch:= \ch_1 \otimes \ch_2, &
D :=  D_1 \otimes I_2  + \G_1 \otimes D_2, \\
\G:=\G_1\otimes \G_2, &
\cam := \cam_1 \otimes \cam_2, &
\t := \t_1 \otimes \t_2,
\end{array}$$
\\
 if $\ct_1$  is  even, and $\ct_2$ is odd,
$$\begin{array}{ccc}
\cl:=\cl_1 \odot \cl_2,  &
\ch:= \ch_1 \otimes \ch_2, &
D :=  D_1 \otimes I_2 + \G_1 \otimes D_2, \\
\G:=I_1\otimes I_2, &
\cam := \cam_1 \otimes \cam_2, &
\t := \t_1 \otimes \t_2,
\end{array}$$
\\
 if $\ct_1$  is  odd, and $\ct_2$ is even,
$$\begin{array}{ccc}
\cl:=\cl_1 \odot \cl_2,  &
\ch:= \ch_1 \otimes \ch_2, &
D :=  D_1 \otimes \G_2 + I_1 \otimes D_2, \\
\G:=I_1\otimes I_2, &
\cam := \cam_1 \otimes \cam_2, &
\t := \t_1 \otimes \t_2,
\end{array}$$
\\
 if $\ct_1$  and $\ct_2$ are both odd,
$$\begin{array}{ccc}
\cl:=\cl_1 \odot \cl_2, &
\ch:= \ch_1 \otimes \ch_2 \otimes\bc^2,  &
D :=  D_1 \otimes I_2 \otimes \eps_1 + I_1 \otimes D_2 \otimes \eps_2, \\
\G:=I_1\otimes I_2\otimes \eps_3, &
\cam := \cam_1 \otimes \cam_2\otimes M_2(\bc), &
\t := \t_1 \otimes \t_2\otimes Tr,
\end{array}$$

where $\eps_1$, $\eps_2$, $\eps_3$
are the Pauli matrices, see \eqref{pauli}. 
\end{definition}


\begin{proposition}  \label{tensorProductTriple}
Let $\ca_1,\ca_2$ be unital $C^*$-algebras, with respective semifinite spectral triples $\ct_1:=(\cl_1,\ch_1,D_1,\G_1;\cam_1,\t_1)$, $\ct_2:=(\cl_2,\ch_2,D_2,\G_2;\cam_2,\t_2)$. Then $\ct_1 \times \ct_2$ is a semifinite spectral triple on the spatial tensor product $\ca_1 \otimes \ca_2$. Moreover, the Hausdorff dimension $d$ of $\ct_1 \times \ct_2$ satisfies $d\leq d_1 + d_2$, where $d_1, d_2$ are the Hausdorff dimensions of the factor spectral triples. Finally, if  $\displaystyle\lim_{t\to\infty}\frac{\log\Lambda_t(D_1)}{\log t}$ exists, the equality $d= d_1+d_2$ holds.
\end{proposition}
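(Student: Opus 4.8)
The plan is to treat the three assertions in turn, reducing everything to the single algebraic identity $D^2 = D_1^2\otimes I + I\otimes D_2^2$ (tensored with the identity of the auxiliary $\bc^2$ in the odd--odd case). First I would verify this identity in each of the four parity cases: on expanding the square the two cross terms cancel because in every case the middle factor anticommutes with the relevant Dirac operator --- $\G_1 D_1 + D_1\G_1 = 0$ when $\ct_1$ is even, $\G_2 D_2 + D_2\G_2 = 0$ when $\ct_2$ is even, and $\eps_1\eps_2 + \eps_2\eps_1 = 0$ for the Pauli matrices in the odd--odd case --- while the squared middle factor equals the identity. With this in hand the spectral-triple axioms are routine: $\pi_1\otimes\pi_2$ is faithful and maps into $\cam = \cam_1\otimes\cam_2$, $D$ is self-adjoint and affiliated with $\cam$, and axiom (2) follows from the Leibniz rule together with the fact that $\G_1$ commutes with the represented algebra, so that in the even--even and even--odd cases $[D,\pi_1(a_1)\otimes\pi_2(a_2)]$ splits as $[D_1,\pi_1(a_1)]\otimes\pi_2(a_2)$ plus $\pi_1(a_1)\G_1\otimes[D_2,\pi_2(a_2)]$, both bounded and domain-preserving; the remaining cases are analogous.

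The analytic heart of the argument is to identify the spectral distribution of $D^2$. Writing $\mu_i(\Omega):=\t_i(e_{D_i^2}(\Omega))$ for Borel $\Omega\subset[0,\infty)$, the operators $D_1^2\otimes I$ and $I\otimes D_2^2$ commute strongly with joint spectral measure $e_{D_1^2}\otimes e_{D_2^2}$, so $D^2$ is the image of $(x,y)\mapsto x+y$ and $\t=\t_1\otimes\t_2$ pushes the product measure forward to the convolution; hence, up to the harmless factor $2$ coming from $\mathrm{Tr}$ on $\bc^2$ in the odd--odd case,
\[
\Lambda_t(D)=\t\big(e_{D^2}[0,t^2)\big)=(\mu_1*\mu_2)([0,t^2))=\iint_{\{x+y<t^2\}}d\mu_1(x)\,d\mu_2(y).
\]
Since $x,y\ge0$ forces $\{x+y<t^2\}\subset[0,t^2)\times[0,t^2)$, this immediately yields $\Lambda_t(D)\le\Lambda_t(D_1)\,\Lambda_t(D_2)<\infty$ for every $t$, which is exactly the $\t$-compactness of $(1+D^2)^{-1}$ and completes the proof that $\ct_1\times\ct_2$ is a semifinite spectral triple.

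For the bound $d\le d_1+d_2$ I would argue directly on the trace rather than through the convolution. The weighted arithmetic--geometric mean inequality gives, for any $\theta\in[0,1]$ and $x,y\ge0$, that $1+x+y\ge(1+x)^\theta(1+y)^{1-\theta}$, whence $(1+x+y)^{-s}\le(1+x)^{-s\theta}(1+y)^{-s(1-\theta)}$; integrating against $\mu_1\times\mu_2$ and using multiplicativity of $\t$ on the tensor product yields
\[
\t\big((1+D^2)^{-s}\big)\le\t_1\big((1+D_1^2)^{-s\theta}\big)\,\t_2\big((1+D_2^2)^{-s(1-\theta)}\big).
\]
By the definition of $d_i$ the right-hand side is finite whenever $s\theta>d_1/2$ and $s(1-\theta)>d_2/2$, and such a $\theta$ exists precisely when $s>(d_1+d_2)/2$; taking $\alpha=2s$ gives $d=\inf\{\alpha:\t((1+D^2)^{-\alpha/2})<\infty\}\le d_1+d_2$, which in particular establishes finite summability.

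The reverse inequality, and the role of the extra hypothesis, is the main obstacle. Restricting the convolution integral to $\{x<t^2/2,\ y<t^2/2\}$ gives the lower bound $\Lambda_t(D)\ge\Lambda_{t/\sqrt2}(D_1)\,\Lambda_{t/\sqrt2}(D_2)$; taking logarithms, dividing by $\log t$, and invoking Proposition \ref{dimension-Lambda} reduces the claim to showing that $\limsup_{t}\big(\tfrac{\log\Lambda_{t/\sqrt2}(D_1)}{\log t}+\tfrac{\log\Lambda_{t/\sqrt2}(D_2)}{\log t}\big)\ge d_1+d_2$. The subtlety is that the $\limsup$ of a sum can be strictly smaller than the sum of the $\limsup$'s, so the two factors cannot be controlled independently; this is exactly where the hypothesis that $\lim_{t}\tfrac{\log\Lambda_t(D_1)}{\log t}$ exists enters. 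Since the rescaling $t\mapsto t/\sqrt2$ only alters $\log t$ by an additive constant, the first summand then converges to $d_1$, and I can apply $\limsup(a_t+b_t)\ge\lim a_t+\limsup b_t$ to bound the expression below by $d_1+\limsup_t\tfrac{\log\Lambda_{t/\sqrt2}(D_2)}{\log t}=d_1+d_2$. Combined with the upper bound this gives $d=d_1+d_2$.
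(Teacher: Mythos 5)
Your proof is correct, and its analytic core coincides with the paper's: both identify the spectral distribution of $D^2=D_1^2\otimes I+I\otimes D_2^2$ with the product of the spectral measures of the two factors (the paper draws this in the joint spectral plane of $(D_1,D_2)$ and uses the inclusions $Q_{t/\sqrt 2}\subset B_t\subset Q_t$ of squares and a disk; your convolution bounds $\Lambda_{t/\sqrt2}(D_1)\,\Lambda_{t/\sqrt2}(D_2)\le\Lambda_t(D)\le\Lambda_t(D_1)\,\Lambda_t(D_2)$ are exactly the same inequalities), and both resolve the limsup-of-a-sum subtlety in the lower bound by invoking the assumed existence of $\lim_{t\to\infty}\log\Lambda_t(D_1)/\log t$, via Proposition \ref{dimension-Lambda}. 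There are two genuine differences. First, the paper does not verify the spectral-triple axioms at all: when the two triples are not both odd it cites Jordans (Theorem 2.13 and Lemma 2.19) and declares the odd--odd case analogous, whereas you check the axioms directly (cancellation of the cross terms in $D^2$ by anticommutation, Leibniz rule together with $[\G_1,\pi_1(a_1)]=0$ for bounded commutators, and $\tau$-compactness from $\Lambda_t(D)\le \Lambda_t(D_1)\Lambda_t(D_2)<\infty$), which makes the argument self-contained. Second, for the upper bound $d\le d_1+d_2$ you argue through the weighted AM--GM inequality $1+x+y\ge(1+x)^\theta(1+y)^{1-\theta}$ and the resulting H\"older-type factorization $\tau\big((1+D^2)^{-s}\big)\le\tau_1\big((1+D_1^2)^{-s\theta}\big)\,\tau_2\big((1+D_2^2)^{-s(1-\theta)}\big)$, while the paper reads the same bound off $\Lambda_t(D)\le\Lambda_t(D_1)\Lambda_t(D_2)$ and Proposition \ref{dimension-Lambda}; since you already have that inequality, your AM--GM step is logically redundant, but it has the merit of exhibiting finite summability of the product triple directly at the level of traces, independently of the $\Lambda$-characterization of the dimension.
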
  
\begin{proof} 
In case $\ct_1$ and $\ct_2$ are not both odd, the result is proved in \cite{Jordans}, Theorem 2.13, and Lemma 2.19. In the remaining case, one can proceed analogously.
We now give an alternative proof of the formula for the Hausdorff dimension, valid in all cases. 
Since $D^2=D_1^2\otimes I+I\otimes D_2^2$, in all cases, if $d$ denotes the dimension of $(\cl,\ch,D; \cam,\t)$, we have that 
\begin{align*}
d & = \limsup_{t\to\infty}\frac{\log\Lambda_t(D)}{\log t}
= \limsup_{t\to\infty}\frac{\log\t(e_{D}(-t,t))}{\log t} \\
& = \limsup_{t\to\infty}\frac{\log\t(\chi_{[0,t^2)}(D_1^2\otimes I+I\otimes D_2^2))}{\log t}.
\end{align*}
If $\s_i$ denotes the spectrum of $D_i$, $i=1,2$, the representations of $C_0(\s_i)$ on $\ch_i$ with image in $\cam_i$ given by functional calculus, $i=1,2$, together with the Radon measures $\nu_i$ on $\s_i$ induced by the traces $\tau_i$, $i=1,2$, give rise to a representation $j$ of $C_0(\s_1\times\s_2)$ on $\ch_1\otimes\ch_2$ with image in $\cam_1\otimes\cam_2$ together with the Radon measure $\nu:=\nu_1\otimes\nu_2$ on $\s_1\times\s_2$ induced by the trace $\tau:=\tau_1\otimes\tau_2$ such that $j(f_1\otimes f_2)=f_1(D_1)\otimes f_2(D_2)$ and $\int f_1\otimes f_2 d\nu=\tau_1(f_1(D_1)) \tau_2(f_2(D_2))$. Then, denoting by $B_r$ the disk of radius $r$ centered in the origin of the plane, and by $Q_r$ the square $[-r,r]\times[-r,r]$ in the plane,
$$
\chi_{[0,t^2)}(D_1^2\otimes I+I\otimes D_2^2) = j(\chi_{B_t}).
$$
Then the inclusions $Q_{t/\sqrt2}\subset B_t\subset Q_t$ give the inequalities
$$
\tau_1(\Lambda_{t/\sqrt2}(D_1))\cdot\tau_2(\Lambda_{t/\sqrt2}(D_2)) \leq \nu(Q_{t/\sqrt2}) \leq 
\nu(B_t)\leq\nu(Q_t)\leq\tau_1(\Lambda_t(D_1))\cdot\tau_2(\Lambda_t(D_2)),
$$
from which we get
\begin{align*}
\liminf_{t\to\infty}\frac{\log\Lambda_t(D_1)}{\log t}+\limsup_{t\to\infty}\frac{\log\Lambda_t(D_2)}{\log t}
& \leq \limsup_{t\to\infty}\frac{\log\Lambda_t(D)}{\log t} \\
& \leq \limsup_{t\to\infty}\frac{\log\Lambda_t(D_1)}{\log t}+\limsup_{t\to\infty}\frac{\log\Lambda_t(D_2)}{\log t}.
\end{align*}
\end{proof}


\subsection{The cases of the crossed products}\label{subsec:CrossedProd}

Let $\ca$ be a unital $C^*$-algebra, $\a\in \Aut(\ca)$ a unital automorphism, and $(\cl,\ch,D;\cam,\t)$ a semifinite spectral triple on $\ca$. Assume that $\a$ is Lip-bounded, that is $\a(\cl)\subset\cl$, and, for any $a\in\cl$, $\sup_{n\in\bz} \| [D,\a^{-n}(a)] \| < \infty$. Then, following \cite{BMR}, we can construct a semifinite spectral triple $(\cl_\rtimes,\ch_\rtimes,D_\rtimes;\cam_\rtimes,\t_\rtimes)$ on the crossed product $C^*$-algebra $\ca\rtimes_\a \bz = \langle \wt{\pi_u}(\ca),U \rangle$, which is defined as follows:

\begin{itemize}
\item[$(1)$] if $(\cl,\ch,D,\G;\cam,\t)$ is even, 
\begin{flalign*}
& \cl_\rtimes := {}^*\mathrm{alg}(\wt{\pi_u}(\cl),U), & \qquad  & \ch_\rtimes   := \ch \otimes \ell^2(\bz), & \\
& D_\rtimes := D \otimes I + \G \otimes D_\bz, & \qquad  & \G_\rtimes  := I\otimes I, & \\
& \cam_\rtimes := \cam \otimes \cb(\ell^2(\bz)), & \qquad & \t_\rtimes  := \t \otimes Tr, &
\end{flalign*}
where ${}^*\mathrm{alg}(\wt{\pi_u}(\cl),U)$ is the $^*$-algebra generated by $\wt{\pi_u}(\cl)$ and $U$, $(D_\bz \xi)(n) := n\xi(n)$, $\forall \xi\in\ell^2(\bz)$, and $Tr$ is the usual trace on $\cb(\ell^2(\bz))$,

\item[$(2)$] if $(\cl,\ch,\G;\cam,\t)$ is odd,
\begin{flalign*}
& \cl_\rtimes := {}^*\mathrm{alg}(\wt{\pi_u}(\cl),U), & \qquad  & \ch_\rtimes   := \ch \otimes \ell^2(\bz) \otimes \bc^2, & \\
& D_\rtimes := D \otimes I \otimes \eps_1 + I \otimes D_\bz \otimes \eps_2, & \qquad  & \G_\rtimes  := I \otimes I \otimes \eps_3, & \\
& \cam_\rtimes := \cam \otimes \cb(\ell^2(\bz)) \otimes M_2(\bc), & \qquad & \t_\rtimes  := \t \otimes Tr \otimes tr, &
\end{flalign*}
where $tr$ is the normalized trace on $M_2(\bc)$.
\end{itemize}


In case $\a$ satisfies a weaker condition, we have the following result.

\begin{definition}\label{semiequi}
Let $\ca$ be a unital C*-algebra, $\a\in \Aut(\ca)$ a unital automorphism, $(\cl,\ch,D)$ a spectral triple on $\ca$ such that $\a(\cl)\subset\cl$.
The automorphism is said to be Lip-semibounded if
$$
\sup_{n\in\bn} \| [D,\a^{-n}(a)] \| < \infty, \qquad \forall a\in\cl.
$$
\end{definition}


\begin{proposition}  
Let $\ca$ be a unital $C^*$-algebra, $\a\in \Aut(\ca)$ a unital automorphism, $(\cl,\ch,D;\cam,\t)$ a semifinite spectral triple on $\ca$, and assume $\a$ is Lip-semibounded. Then we can construct a semifinite spectral triple $(\cl_\rtimes,\ch_\rtimes,D_\rtimes;\cam_\rtimes,\t_\rtimes)$ on the crossed product $C^*$-algebra $\ca\rtimes_\a \bn = \langle \zci_\ca(\ca),t \rangle$, which is defined as follows:

\begin{itemize}
\item[$(1)$] if $(\cl,\ch,D,\G;\cam,\t)$ is even, 
\begin{flalign*}
& \cl_\rtimes := {}^*\mathrm{alg}(\zci_\ca(\cl),t), & \qquad  & \ch_\rtimes   := \ch \otimes \ell^2(\bn), & \\
& D_\rtimes := D \otimes I + \G \otimes D_\bn, & \qquad  & \G_\rtimes  := I\otimes I, & \\
& \cam_\rtimes := \cam \otimes \cb(\ell^2(\bn)), & \qquad & \t_\rtimes  := \t \otimes Tr, &
\end{flalign*}
where ${}^*\mathrm{alg}(\zci_\ca(\cl),t)$ is the $^*$-algebra generated by $\zci_\ca(\cl)$ and $t$, $(D_\bn \xi)(n) := n\xi(n)$, $\forall \xi\in\ell^2(\bn)$, and $Tr$ is the usual trace on $\cb(\ell^2(\bn))$,

\item[$(2)$] if $(\cl,\ch,\G;\cam,\t)$ is odd,
\begin{flalign*}
& \cl_\rtimes := {}^*\mathrm{alg}(\zci_\ca(\cl),t), & \qquad  & \ch_\rtimes   := \ch \otimes \ell^2(\bn) \otimes \bc^2, & \\
& D_\rtimes := D \otimes I \otimes \eps_1 + I \otimes D_\bn \otimes \eps_2, & \qquad  & \G_\rtimes  := I \otimes I \otimes \eps_3, & \\
& \cam_\rtimes := \cam \otimes \cb(\ell^2(\bn)) \otimes M_2(\bc), & \qquad & \t_\rtimes  := \t \otimes Tr \otimes tr, &
\end{flalign*}
where $tr$ is the normalized trace on $M_2(\bc)$.

\noindent Moreover, in both cases, if $\som$ is the dimension of the original spectral triple, then the dimension of the new spectral triple is $\som+1$. 
\end{itemize}
\end{proposition}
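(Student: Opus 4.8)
The plan is to verify directly the four defining properties of a semifinite spectral triple for the data $(\cl_\rtimes,\ch_\rtimes,D_\rtimes;\cam_\rtimes,\t_\rtimes)$, following the blueprint of \cite{BMR} but exploiting the reduction to $\bn$. By Theorem \ref{prop:crossedProd} we may represent $\ca\rtimes_\a\bn$ on $\ch_\rtimes=\ch\otimes\ell^2(\bn)$ (tensored with $\bc^2$ in the odd case): identifying $\ell^2(\bn,\ch)$ with $\ch\otimes\ell^2(\bn)$, we have $(\zci_\ca(a)\xi)(n)=\pi(\a^{-n}(a))\xi(n)$ for $n\geq0$, while the generating isometry acts as $t=I_\ch\otimes S$ with $S$ the unilateral shift on $\ell^2(\bn)$. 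This is the compression to $\ell^2(\bn,\ch)$ of the regular representation of $\ca\rtimes_\a\bz$ on $\ell^2(\bz,\ch)$, which is faithful by amenability of $\bz$ and faithfulness of $\pi$; hence the corner $p(\ca\rtimes_\a\bz)p\cong\ca\rtimes_\a\bn$ is faithfully represented. Moreover $\cl_\rtimes={}^*\mathrm{alg}(\zci_\ca(\cl),t)$ is a unital, norm-dense $^*$-subalgebra, since $\cl$ is dense in $\ca$ and $\zci_\ca$ is isometric. It then remains to check self-adjointness and affiliation of $D_\rtimes$, the bounded commutator property, $\t_\rtimes$-compactness of the resolvent, and the dimension formula.

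The heart of the matter is the bounded commutator property, and it is exactly here that Lip-semiboundedness and the restriction to $\ell^2(\bn)$ come into play. Treating the even base case ($D_\rtimes=D\otimes I+\G\otimes D_\bn$), the operator $\zci_\ca(a)=\sum_n\pi(\a^{-n}(a))\otimes e_{nn}$ is block-diagonal in the $\ell^2(\bn)$-variable, so $[\G\otimes D_\bn,\zci_\ca(a)]=0$ (because $\G$ commutes with $\pi(\ca)$ and $D_\bn$ is diagonal), while $[D\otimes I,\zci_\ca(a)]$ is block-diagonal with $n$-th block $[D,\pi(\a^{-n}(a))]$. Its norm is therefore $\sup_{n\in\bn}\|[D,\a^{-n}(a)]\|$, which is finite precisely by the Lip-semiboundedness hypothesis; had we worked on $\ell^2(\bz)$ the supremum would run over all of $\bz$ and would require the stronger Lip-boundedness of \cite{Skalski}. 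For the isometry, $[D\otimes I,t]=0$, while a direct computation gives $[D_\bn,S]=S$, whence $[\G\otimes D_\bn,t]=\G\otimes S$ is bounded. By the Leibniz rule $[D_\rtimes,\cdot]$ is then bounded on all of $\cl_\rtimes$, and each such commutator lies in $\cam_\rtimes=\cam\otimes\cb(\ell^2(\bn))$, being block-diagonal with entries in $\cam$. The odd base case is identical after inserting the Pauli matrices $\eps_1,\eps_2$.

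For self-adjointness one uses that $D\otimes I$ and $\G\otimes D_\bn$ anticommute (because $D\G=-\G D$), so that $D_\rtimes^2=D^2\otimes I+I\otimes D_\bn^2$ and $D_\rtimes$ is self-adjoint and affiliated with $\cam_\rtimes$, exactly as in the tensor-product construction underlying Proposition \ref{tensorProductTriple}. Since $\Lambda_t(D_\rtimes)=\t_\rtimes(\chi_{[0,t^2)}(D_\rtimes^2))\leq\Lambda_t(D)\,\Lambda_t(D_\bn)<\infty$ (using $\Lambda_t(D_\bn)=\#\{n\geq0:n<t\}\sim t$ and the finiteness of $\Lambda_t(D)$), the resolvent is $\t_\rtimes$-compact. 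Finally, because the metric dimension depends only on $D_\rtimes^2$ and $\t_\rtimes=\t\otimes Tr$, the computation is literally that of Proposition \ref{tensorProductTriple} applied to the base triple together with the auxiliary dimension-one triple $(\bc,\ell^2(\bn),D_\bn;\cb(\ell^2(\bn)),Tr)$; since $\lim_{t\to\infty}\log\Lambda_t(D_\bn)/\log t=1$ exists, the equality case of that proposition yields metric dimension $\som+1$.

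The main obstacle is the bounded commutator estimate for $\zci_\ca(a)$: it is the only place where the passage from $\bz$ to $\bn$ is indispensable, since the reduction onto the nonnegative frequencies $\ell^2(\bn)$ is exactly what confines the relevant conjugates to $\a^{-n}(a)$ with $n\geq0$, making Lip-semiboundedness — rather than the full Lip-boundedness required in \cite{BMR,Skalski} — sufficient. Everything else is a routine transcription of the tensor-product and crossed-product arguments already recorded above.
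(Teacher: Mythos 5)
Your proposal is correct and follows essentially the same route as the paper's own proof: there too the crossed product is represented on $\ch\otimes\ell^2(\bn)$ by letting $\ca$ act block-diagonally as $\bigoplus_{n\geq 0}\pi(\a^{-n}(a))$ and the isometry act as the unilateral shift (the representation being induced from this covariant pair via the universal property established in Theorem \ref{prop:crossedProd}), the bounded-commutator property is reduced to $\sup_{n\in\bn}\|[D,\pi(\a^{-n}(a))]\|<\infty$ by exactly your block-diagonal computation together with $[D\otimes I,t]=0$ and $\|[\G\otimes D_\bn,t]\|\le 1$, and the $\t$-compactness of the resolvent and the dimension formula $\som+1$ follow from Proposition \ref{tensorProductTriple} together with $\Lambda_t(D_\bn)=[t]$. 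The one point where you assert more than the paper is faithfulness of this representation, and there your ``hence'' is too quick --- compression by the positive-frequency projection is not a $*$-homomorphism, so faithfulness of the regular representation of $\ca\rtimes_\a\bz$ on $\ell^2(\bz,\ch)$ does not by itself give faithfulness of the generated corner algebra (one would need, e.g., a gauge-invariant uniqueness argument of Coburn/Fowler--Raeburn type) --- but this side remark is not used elsewhere in your argument, and the paper's own proof does not address faithfulness at all.
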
  
\begin{proof} 
We only prove the even case, the odd case being similar. Let us first observe that, since $\a$ is an automorphism, $\ca_\infty = \ca$, $\a_\infty=\a$, and $\zci_\ca(a) = \ppcov(a)p$, $\forall a\in\ca$. Let $\pi:\ca\to\cb(\ch)$ be the representation implied by the spectral triple $(\cl,\ch,D,\G;\cam,\t)$, and consider $(\wt{\pi}(a)\xi)(n) := \pi(\a^{-n}(a))\xi(n)$, $\forall a\in\ca$, $\xi\in\ch\otimes\ell^2(\bn)$, $n\in\bn$, which is a representation of $\ca$ on $\ch\otimes\ell^2(\bn)$, and the shift operator 
$$
(W\xi)(n) :=
\begin{cases}
0, & n=0,\\
\xi(n-1), & n\geq 1.
\end{cases}\ 
$$ 
Then, it is easy to see that $(\ch\otimes\ell^2(\bn),\wt{\pi},W)$ is a covariant representation of $(\ca,\a,\bn)$ on $\ch\otimes\ell^2(\bn)$, in the sense of Definition \ref{Def:CovRep}. Therefore it induces a non-degenerate representation $\widehat{\pi}$ of $\ca\rtimes_\a \bn = \langle \zci(\ca), t \rangle$ on $\ch\otimes\ell^2(\bn)$, such that $\widehat{\pi}\circ \zci_\ca = \wt{\pi}$, and $\wh{\pi}(t)=W$. Hence $\widehat{\pi}(\ca\rtimes_\a \bn) \subset \cam_\rtimes$, while the facts that $D_\rtimes\widehat{\in} \cam_\rtimes$, and $(1+D_\rtimes^2)^{-1}$ is $\t_\rtimes$-compact follow from Proposition \ref{tensorProductTriple}. It remains to prove that $\| [ D_\rtimes, \widehat{\pi}(a) ] \| < \infty$, $\forall a\in\cl_\rtimes$. Since the commutators $[\G\otimes D_\bn,\widehat{\pi}(a)]$ and $[D\otimes I,W]$ vanish, while $\| [\G\otimes D_\bn, W] \| \leq 1$, it is enough to estimate the commutators $\| [D\otimes I,\widehat{\pi}(a) ] \| = \| \diag \{ [D,\pi(\a^{-n}(a)) ] : n\in\bn \} \| = \sup_{n\in\bn} \| [D,\pi(\a^{-n}(a)) ] \| < \infty$, and the claim follows.

\noindent We now prove the statement about the dimension, which in turn implies (again) the $\tau$-compactness of the resolvent.
By Proposition \ref{dimension-Lambda}, the Hausdorff dimension of $D_\rtimes$ is given by
\begin{displaymath}
	\limsup_{t\to +\infty}\frac{\log(\Lambda_t(D_\rtimes))}{\log t}.
\end{displaymath}
We observe that $\Lambda_{t}(D_\bn)=[t]$ and thus 
$$
\limsup_{t\to \infty} \frac{\log \Lambda_t (D_\bn)}{\log t}=\lim_{t\to \infty} \frac{\log ([t])}{\log t }=1.
$$
Now by applying Proposition \ref{tensorProductTriple} we are done.
\end{proof}


The next result has to do with the case of crossed products with respect to endomorphisms.

\begin{theorem} \label{triple-cross-prod-N}
Let $\ca$ be a unital $C^*$-algebra,  $\a\in \End(A)$ an injective, unital $*$-endomorphism, $\ca_\infty=\varinjlim \ca$ the inductive limit  described in \eqref{eq:CstarIndLim1}, and $(\cl_\infty,\ch_\infty,D_{\infty};\cam_\infty,\t_\infty)$ a semifinite spectral triple of dimension $p$ on $\ca_\infty$. If the morphism $\a_\infty\in {\rm Aut}(\ca_\infty)$ is Lip-semibounded, then
 there exists a semifinite spectral triple $(\cl_\rtimes,\ch_\rtimes,D_\rtimes;\cam_\rtimes,\t_\rtimes)$ of dimension $p+1$   on the crossed product $C^*$-algebra $\ca\rtimes_\a \bn$.
\end{theorem}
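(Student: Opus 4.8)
The plan is to reduce the statement to the automorphism case treated in the Proposition preceding this theorem, applied to the pair $(\ca_\infty,\a_\infty)$. The only genuinely new ingredient is the recognition that the crossed product $\ca\rtimes_\a\bn$ of $\ca$ by the endomorphism $\a$ coincides, as a $C^*$-algebra, with the crossed product $\ca_\infty\rtimes_{\a_\infty}\bn$ of $\ca_\infty$ by the \emph{automorphism} $\a_\infty$. Once this identification is in place, the required triple is exactly the one produced by that Proposition.

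First I would establish the identification $\ca\rtimes_\a\bn\cong\ca_\infty\rtimes_{\a_\infty}\bn$. By Theorem \ref{prop:crossedProd}, $\ca\rtimes_\a\bn$ is isomorphic to the concrete algebra $C^*(\ca,\a,\bn)=\langle\,\zci_\ca(\ca),t\,\rangle$ acting on $p\ell^2(\bz,\ch_u)$, and Proposition \ref{prop:generate}(3) rewrites this as $\langle\,p\ppcov(\ca_\infty)p,\,p\uu p\,\rangle$. On the other hand, since $\a_\infty$ is an automorphism, the inductive system \eqref{eq:CstarIndLim1} associated with $(\ca_\infty,\a_\infty)$ has isomorphic connecting maps, so its inductive limit is $\ca_\infty$ itself and the induced automorphism is again $\a_\infty$; applying Theorem \ref{prop:crossedProd} and Proposition \ref{prop:generate}(3) to $(\ca_\infty,\a_\infty)$ realises $\ca_\infty\rtimes_{\a_\infty}\bn$ as the same algebra $\langle\,p\ppcov(\ca_\infty)p,\,p\uu p\,\rangle$ inside $\ca_\infty\rtimes_{\a_\infty}\bz$. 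Because $\ppcov$, $\uu$ and $p$ are manufactured from the single algebra $\ca_\infty\rtimes_{\a_\infty}\bz$ in both constructions, the two generated $C^*$-algebras are literally equal, yielding the isomorphism $\ca\rtimes_\a\bn\cong\ca_\infty\rtimes_{\a_\infty}\bn$.

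With this identification in hand, I would apply the Proposition preceding this theorem to the automorphism $\a_\infty$ of $\ca_\infty$ together with the given semifinite spectral triple $(\cl_\infty,\ch_\infty,D_\infty;\cam_\infty,\t_\infty)$ of dimension $p$. The Lip-semiboundedness of $\a_\infty$ is precisely the hypothesis that Proposition requires, so it produces a semifinite spectral triple $(\cl_\rtimes,\ch_\rtimes,D_\rtimes;\cam_\rtimes,\t_\rtimes)$ on $\ca_\infty\rtimes_{\a_\infty}\bn=\ca\rtimes_\a\bn$, and the dimension assertion in that Proposition gives that its metric dimension equals $p+1$, as claimed.

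The main obstacle is the identification of the two crossed products, namely checking that the universal object of Definition \ref{1.1} for the endomorphism $\a$ and the universal object for the automorphism $\a_\infty$ yield the same algebra. The concrete realisation via Theorem \ref{prop:crossedProd} and Proposition \ref{prop:generate} makes this transparent; a more conceptual alternative would be to verify that covariant representations of $(\ca,\a)$ in the sense of Definition \ref{Def:CovRep} correspond bijectively to covariant representations of $(\ca_\infty,\a_\infty)$, using the lift of Proposition \ref{prop:RepIndLim} in one direction and restriction along $\f_{\infty 0}$ in the other. Once the algebras are matched, the remainder is a direct invocation of the automorphism case.
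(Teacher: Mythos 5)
Your proposal is correct and takes essentially the same route as the paper: the paper's own proof consists precisely of the observation that $\ca\rtimes_\a \bn=\ca_\infty\rtimes_{\a_\infty} \bn$ followed by an application of the preceding proposition for Lip-semibounded automorphisms. Your more detailed justification of that identification, via Theorem \ref{prop:crossedProd} and Proposition \ref{prop:generate}(3), simply fills in the step the paper dispatches with ``Note that\dots''.
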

\begin{proof}
Note that $\ca\rtimes_\a \bn=\ca_\infty\rtimes_{\a_\infty} \bn$. Now the claim follows by applying the previous proposition.
\end{proof}

\section[Spectral triples for crossed products]{Spectral triples for crossed products generated by self-coverings}

In this section we exhibit some examples of semifinite spectral triples for crossed products with respect to an endomorphism: the self-covering of a $p$-torus, the self-covering of the rational rotation algebra,  the endomorphism UHF algebra given by the shift, and the self-covering of the Sierpi\'nski gasket. 
In this paper we consider two pictures of the inductive limits. One is what we call the Cuntz picture. The other one deals with an increasing sequence of algebras  $\ca_i$ with the morphisms $\varphi_i: \ca_i\to \ca_{i+1}$ being the inclusions, which entails that the morphisms $\alpha_i: \ca_i\to\ca_i$ are injective.
The following result gives a more detailed description of the second picture. 

\begin{proposition}\label{prop41}
Given a family of algebras $\{\ca_i\}_{i\geq 1}$, a morphism $\a_1: \ca_1\to\ca_1$, a collection of isomorphisms $\b_i: \ca_i\to\ca_{i+1}$ for all $i\in \bn$, one can obtain the following commuting diagram
\begin{equation*} 
	\xymatrix{
	& \ca_1 \ar[rr]^{ \varphi_1 } \ar[dd]^{\a_1} &&  \ca_2 \ar[rr]^{ \varphi_2  } \ar[dd]^{\a_2} && \ca_3 \ar[rr]^{  \varphi_3  } \ar[dd]^{\a_3} && \cdots \ar[r]  &  \ca_\infty \ar[dd]^{\a_\infty} \\
	&&&&&&&&\\
	& \ca_1 \ar[rr]^{  \varphi_1  } \ar[uurr]^{\b_1} && \ca_2 \ar[rr]^{ \varphi_2  } \ar[uurr]^{\b_2} && \ca_3 \ar[rr]^{ \varphi_3  } \ar[uurr]^{\b_3} && \cdots \ar[r] & \ca_\infty
         }
\end{equation*} 
where the morphisms $\a_i: \ca_i\to\ca_i$ are defined by the formula $\a_i :=\b_{i-1}\circ \a_{i-1}\circ \b_{i-1}^{-1}$ for $i\geq 2$, 
$\varphi_1 := \beta_1\circ \a_1$, $\varphi_i := \a_{i+1}\circ \b_i =\b_i\circ \a_i$ for $i\geq 2$.  
Moreover, the morphisms $\{\varphi_i\}_{i\geq 1}$ give rise to an inductive limit that we denote by $\ca_\infty$ and
 the former morphisms $\{\a_i\}_{i\geq 1}$ and $\{\b_i\}_{i\geq 1}$ induce morphisms $\a_\infty, \b_\infty: \ca_\infty\to\ca_\infty$   that are inverses of each other.
\end{proposition}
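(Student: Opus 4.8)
The plan is to first check that the prescribed maps are internally consistent and make every cell of the diagram commute, then to produce $\a_\infty$ and $\b_\infty$ via the universal property of the inductive limit, and finally to verify on a dense subalgebra that they are mutually inverse. The identity on which everything rests is
\[
\a_{i+1}\circ\b_i=\b_i\circ\a_i,\qquad i\geq 1,
\]
which for $i\geq 2$ is immediate from the definition $\a_{i+1}=\b_i\circ\a_i\circ\b_i^{-1}$, and for $i=1$ reads $\a_2\circ\b_1=\b_1\circ\a_1\circ\b_1^{-1}\circ\b_1=\b_1\circ\a_1=\varphi_1$. In particular the two expressions given for $\varphi_i$ agree, and moreover $\varphi_1=\b_1\circ\a_1=\a_2\circ\b_1$, so the uniform formula $\varphi_i=\a_{i+1}\circ\b_i=\b_i\circ\a_i$ is valid for all $i\geq 1$ and no genuine special-casing of $i=1$ is needed beyond remembering that $\a_1$ is primitive while the higher $\a_i$ are conjugates.

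Next I would reduce the commutativity of the diagram to two families of identities: the vertical squares $\a_{i+1}\circ\varphi_i=\varphi_i\circ\a_i$ and the diagonal parallelograms $\varphi_{i+1}\circ\b_i=\b_{i+1}\circ\varphi_i$. Both follow by direct substitution using the formula above; for instance $\a_{i+1}\circ\varphi_i=\a_{i+1}\circ\b_i\circ\a_i=\varphi_i\circ\a_i$, while $\varphi_{i+1}\circ\b_i=\b_{i+1}\circ\a_{i+1}\circ\b_i=\b_{i+1}\circ\b_i\circ\a_i=\b_{i+1}\circ\varphi_i$. These are routine computations, each collapsing to a common expression such as $\b_{i+1}\circ\b_i\circ\a_i$.

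Since the $\b_i$ are isomorphisms and $\a_1$ is a $^*$-homomorphism, each $\varphi_i$ is a $^*$-homomorphism, so $\{\ca_i,\varphi_i\}$ is a genuine inductive system whose $C^*$-inductive limit $\ca_\infty$ exists, with canonical maps $\varphi_{\infty i}:\ca_i\to\ca_\infty$ satisfying $\varphi_{\infty,i+1}\circ\varphi_i=\varphi_{\infty i}$. I would then invoke the universal property (as in \cite{WO}, Theorem L.2.1, already used for \eqref{eq:CstarIndLim2}) twice. The vertical squares say that $\{\a_i\}$ is a morphism of the inductive system into itself, hence induces a unique $\a_\infty:\ca_\infty\to\ca_\infty$ with $\a_\infty\circ\varphi_{\infty i}=\varphi_{\infty i}\circ\a_i$. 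For $\b_\infty$ I would check that the family $\psi_i:=\varphi_{\infty,i+1}\circ\b_i$ is a compatible cocone: by the parallelogram identity and inductive-limit compatibility, $\psi_{i+1}\circ\varphi_i=\varphi_{\infty,i+2}\circ\b_{i+1}\circ\varphi_i=\varphi_{\infty,i+2}\circ\varphi_{i+1}\circ\b_i=\varphi_{\infty,i+1}\circ\b_i=\psi_i$, so there is a unique $\b_\infty:\ca_\infty\to\ca_\infty$ with $\b_\infty\circ\varphi_{\infty i}=\varphi_{\infty,i+1}\circ\b_i$.

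Finally I would verify $\a_\infty\circ\b_\infty=\b_\infty\circ\a_\infty=\id$, which it suffices to do on the dense subalgebra $\bigcup_i\varphi_{\infty i}(\ca_i)$. Using $\b_i\circ\a_i=\a_{i+1}\circ\b_i=\varphi_i$ together with $\varphi_{\infty,i+1}\circ\varphi_i=\varphi_{\infty i}$, one computes $\b_\infty\circ\a_\infty\circ\varphi_{\infty i}=\varphi_{\infty,i+1}\circ\b_i\circ\a_i=\varphi_{\infty,i+1}\circ\varphi_i=\varphi_{\infty i}$ and symmetrically $\a_\infty\circ\b_\infty\circ\varphi_{\infty i}=\varphi_{\infty,i+1}\circ\a_{i+1}\circ\b_i=\varphi_{\infty,i+1}\circ\varphi_i=\varphi_{\infty i}$, so the two induced morphisms are mutually inverse automorphisms. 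The bulk of the argument is bookkeeping; the only point needing real care is the well-definedness of $\b_\infty$, where the index-shifting diagonal maps must still assemble into a cocone over $\ca_\infty$, a subtlety that is resolved precisely by the parallelogram identity and the boundary identity $\varphi_1=\b_1\circ\a_1=\a_2\circ\b_1$.
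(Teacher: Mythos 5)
Your proposal is correct, but it takes a different route from the paper in the second half. Both arguments pivot on the same identity $\alpha_{i+1}\circ\beta_i=\beta_i\circ\alpha_i$ (valid for $i=1$ as well, as you check), and the commutativity of the diagram is treated as a direct computation in both. The difference is in how $\alpha_\infty$ and $\beta_\infty$ are obtained and shown to be mutually inverse. You work abstractly: the squares and parallelograms exhibit $\{\alpha_i\}$ and $\{\varphi_{\infty,i+1}\circ\beta_i\}$ as compatible cocones, the universal property of $\varinjlim$ produces $\alpha_\infty$ and $\beta_\infty$, and invertibility is verified on the dense subalgebra $\bigcup_i\varphi_{\infty i}(\mathcal{A}_i)$ and extended by continuity. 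The paper instead works in the concrete (Takeda) picture of the limit as coherent sequences, defining $\alpha_\infty(f_1,f_2,\ldots)=(\alpha_1(f_1),\alpha_2(f_2),\ldots)$ and $\beta_\infty(f_1,f_2,\ldots)=(0,\beta_1(f_1),\beta_2(f_2),\ldots)$, and computing that both compositions give $(0,\varphi_1(f_1),\varphi_2(f_2),\ldots)$, which is the same element of the limit as $(f_1,f_2,\ldots)$. Your version buys rigor at no extra cost: well-definedness of the induced maps is automatic from the universal property, whereas the paper's explicit formulas tacitly require checking that they preserve coherent sequences and the equivalence relation (your square and parallelogram identities are exactly what is needed, but the paper leaves this implicit). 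The paper's version buys concreteness: the explicit sequence formulas for $\alpha_\infty$ and $\alpha_\infty^{-1}$ are reused verbatim later (e.g.\ in the Sierpi\'nski gasket example, where $\alpha_\infty^{-1}[f]$ is written as a shifted sequence), so having them on record is genuinely useful downstream.
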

\begin{proof}
The first part of the statement, namely the one concerning the commuting diagram, follows by direct computations.
Now we take care of the second part concerning the morphisms $\a_\infty$ and $\b_\infty$. We observe that 
\begin{align*} 
\a_\infty(f_1, f_2, \ldots) & = (\a_1(f_1), \a_{2}(f_2),\ldots)\\
\b_\infty(f_1, f_2, \ldots) & = (0,\b_{1}(f_1), \b_{2} (f_2),\ldots)
\end{align*}
for all $(f_1, f_2,\ldots)\in\ca_\infty$.
On the one hand, we have that
\begin{align*} 
\a_\infty\circ \b_\infty(f_1, f_2, \ldots) & = \a_\infty(0,\b_{1}(f_1), \b_{2} (f_2),\ldots)\\
& = (0,\a_2\circ\b_{1}(f_1), \a_3\circ\b_{2} (f_2),\ldots)\; .
\end{align*}
On the other hand, we have that
\begin{align*} 
\b_\infty\circ\a_\infty (f_1, f_2, \ldots)  & = \b_\infty(\a_1(f_1), \a_{2}(f_2),\ldots)\\
& = (0,\b_1\circ \a_1(f_1), \b_2\circ\a_{2}(f_2),\ldots)\; .
\end{align*}
Since $ \a_{i+1}\circ \b_i =\b_i\circ \a_i$ we are done.
\end{proof}

\begin{notation}\label{notazioneBs}
Before the discussion of the   examples, we introduce some notation. We will consider an invertible matrix $B\in M_p(\bz)$ and we will set $A:=(B^T)^{-1}$. The following exact sequence will play a role in the definition of some of the Dirac operators
$$
0\to \bz^p\to A\bz^p\to \widehat{\bz_B}:= A\bz^p/\bz^p\to 0\; .
$$ 
Moreover, we will consider a section $s: \widehat{\bz_B}\to A\bz^p$ such that $s(\cdot)\in [0,1)^p$. We set $s_h(x):=A^{h-1}s(x)$ as in \cite{AiGuIs01}, p. 1387-1388. Note that $|\widehat{\bz_B}|=|\det(B)|=:r$.
\end{notation}

\medskip

\subsection{The crossed product for the self-coverings of the $p$-torus}

We begin with the case of tori. 
The $p$-torus $\bt^p :=\mathbb{R}^p/\mathbb{Z}^p$ can be endowed with a Dirac operator acting on the Hilbert space $\ch_0 := \bc^{2^{[p/2]}} \otimes L^2(\bt^p,dm)$ 
$$
D_0 :=-i\sum_{a=1}^p \eps_a \otimes \partial^a,
$$
where the matrices $\eps_a= (\eps_a)^*\in M_{2^{[p/2]}}(\bc)$, $\eps_a \eps_b + \eps_b \eps_a=2 \delta_{a,b}$, furnish a representation of the Clifford algebra for the $p$-torus (see \cite{Spin} for more information on Dirac operators).
Then, we may consider the following spectral triple 
$$
(\cl_0 :=C^1(\bt^p),  \ch_0, D_0).
$$
We recall that the spectral triple considered for the torus is even precisely when $p$ is even.   

With the above notation and $B\in M_p(\bz)$, let $\pi:t\in\bt^p\mapsto Bt\in\bt^p$ be  the self-covering, $\a(f)(t)=f(Bt)$ the associated endomorphism of $\ca =C(\bt^p)$.  
Then we consider the inductive system \eqref{eq:CstarIndLim1} and construct the inductive limit $\ca_\infty=\displaystyle\varinjlim\ca_n$. 
An alternative description is given by the following isomorphic inductive family: $\ca_n$ consists of continuous $B^n\bz^p$-periodic functions on $\br^p$, and the embedding is the inclusion.  
In the following we denote by $\bt_n$ the $p$-torus $\br^p/B^n\bz^p$.

Assume now that $B$ is purely expanding, namely $\|B^n v\|$ goes to infinity
 for all vectors $v\neq 0$, hence $\|A\|<1$, where
$A=(B^T)^{-1}$.
In \cite{AiGuIs01}, we produced a semifinite spectral triple on $\ca_\infty=\varinjlim C(\bt_n)$. More precisely,  we constructed a Dirac operator\footnote{The symbol $s_h(\cdot)$ denotes the section defined in Notation \ref{notazioneBs}.} $D_\infty$ acting on $\ch_\infty:= \bc^{2^{[p/2]}} \otimes L^2(\bt^p,dm)\otimes L^2(\car,\tau)$
$$
D_\infty := D_0 \otimes I - 2\pi \sum_{a=1}^p \eps_a \otimes I \otimes \bigg(  \sum_{h=1}^\infty I^{\otimes h-1} \otimes \diag(s_{h}(\cdot)^a)  \bigg),
$$
the algebra $\cl_\infty := \cup_{n\in\bn} C^1(\bt_n)\subset \ca_\infty$ embeds into the injective limit 
$$
\varinjlim \cb(\ch_0)\otimes M_{{\molt}^n}(\bc) = \cb(\bc^{2^{[p/2]}} \otimes L^2(\bt^p,dm)) \otimes \mathrm{UHF}_r\; ,
$$
where $\mathrm{UHF}_r$ denotes the infinite tensor product of $M_r(\bc)$, see Section 4.3 for more details. The C$^*$-algebra $\cb(\bc^{2^{[p/2]}} \otimes L^2(\bt^p,dm)) \otimes \mathrm{UHF}_r$ in turn embeds into $\cam_\infty := \cb(\bc^{2^{[p/2]}} \otimes L^2(\bt^p,dm)) \otimes \car$, where $\car$ denotes the unique injective type II$_1$ factor obtained as the weak closure of the UHF algebra in the GNS representation of the unital trace, and we denote by $\t_\infty := Tr\otimes\t_\car$ the trace on $\cam_\infty$. Then
$( \cl_\infty, \ch_\infty,D_\infty; \cam_\infty, \t_\infty)$ is a finitely summable, semifinite, spectral triple on $\varinjlim \ca_n$, with Hausdorff dimension $p$.


\begin{theorem}\label{teo-p-toro}
Under the above hypotheses and with the notation of the former section, $C(\bt^p)\rtimes_\a\bn$ can be endowed with the finitely summable semifinite spectral triple $(\cl_\rtimes, \ch_\rtimes,D_\rtimes; \cam_\rtimes, \t_\rtimes)$ of Theorem \ref{triple-cross-prod-N}, with Hausdorff dimension $p+1$. 
\end{theorem}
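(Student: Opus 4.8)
The plan is to deduce the statement directly from Theorem \ref{triple-cross-prod-N}. Given the semifinite spectral triple $(\cl_\infty,\ch_\infty,D_\infty;\cam_\infty,\t_\infty)$ of dimension $p$ on $\ca_\infty$ recalled above (constructed in \cite{AiGuIs01}), the only hypothesis of that theorem left to check is the Lip-semiboundedness of $\a_\infty$, i.e.
$$
\sup_{m\in\bn}\|[D_\infty,\a_\infty^{-m}(a)]\|<\infty,\qquad \forall a\in\cl_\infty.
$$
Once this is established, Theorem \ref{triple-cross-prod-N} produces the desired finitely summable semifinite spectral triple on $\ca\rtimes_\a\bn=\ca_\infty\rtimes_{\a_\infty}\bn$, and the value $p+1$ of the metric dimension is part of its conclusion (ultimately coming from Proposition \ref{tensorProductTriple}). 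So the whole content of the proof is the verification of the displayed bound.

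First I would make the action of $\a_\infty^{-1}$ explicit. Working in the picture in which $\ca_n$ consists of the continuous $B^n\bz^p$-periodic functions on $\br^p$ with the connecting maps being inclusions, the endomorphism is $\a(f)(t)=f(Bt)$, so on $\cl_\infty=\cup_n C^1(\bt_n)$ one has $\a_\infty^{-m}(f)(t)=f(B^{-m}t)$, and $f\in C^1(\bt_n)$ implies $\a_\infty^{-m}(f)\in C^1(\bt_{n+m})$. Equivalently, on the characters $u_k(t)=e^{2\pi i\langle k,t\rangle}$, $k\in A^n\bz^p$, which generate $\ca_n$, one computes $\a(u_k)=u_{B^Tk}=u_{A^{-1}k}$ (since $A=(B^T)^{-1}$), whence $\a_\infty^{-m}(u_k)=u_{A^mk}$ with $A^mk\in A^{n+m}\bz^p$.

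Next I would invoke the local isometricity of the covering, which is precisely the property for which $D_\infty$ was designed: for every $g\in\cl_\infty$ the commutator norm $\|[D_\infty,g]\|$ equals the Lipschitz seminorm $\mathrm{Lip}(g)$ of $g$ for the flat Euclidean metric on $\br^p$; on generators this reads $\|[D_\infty,u_k]\|=2\pi|k|$. Granting this, the estimate becomes an elementary chain-rule computation. Since $\nabla(\a_\infty^{-m}f)(t)=(B^{-m})^{T}\nabla f(B^{-m}t)=A^{m}\nabla f(B^{-m}t)$, we get
$$
\|[D_\infty,\a_\infty^{-m}(f)]\|=\mathrm{Lip}(\a_\infty^{-m}f)=\sup_{t}\,|A^{m}\nabla f(B^{-m}t)|\le \|A^{m}\|\,\mathrm{Lip}(f).
$$
Because $B$ is purely expanding we have $\|A\|<1$, so $\|A^m\|\to 0$ and the right-hand side is bounded (indeed it tends to $0$); hence $\sup_{m}\|[D_\infty,\a_\infty^{-m}(f)]\|=\mathrm{Lip}(f)<\infty$. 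This proves Lip-semiboundedness, and the theorem follows.

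The step carrying the real weight is the identification $\|[D_\infty,g]\|=\mathrm{Lip}(g)$ for $g$ living at an arbitrary level $\bt_l$, uniformly in $l$, because for $g\notin C^1(\bt_0)$ the element $g$ acts through the UHF tensor factors and $D_\infty$ contributes via the correction terms $\sum_{h} I^{\otimes h-1}\otimes\diag(s_h(\cdot))$ with $s_h=A^{h-1}s$. For the present argument, however, the sharp equality is not needed: it is enough to have a single constant $C$ with $\|[D_\infty,g]\|\le C\,\mathrm{Lip}(g)$ for all $g\in\cup_l C^1(\bt_l)$, which already gives $\sup_m\|[D_\infty,\a_\infty^{-m}(f)]\|\le C\,\mathrm{Lip}(f)$ by the same scaling. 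This reduction, together with the dilation estimate above, is the essence of the proof; summability and the metric dimension $p+1$ are then inherited verbatim from Theorem \ref{triple-cross-prod-N}.
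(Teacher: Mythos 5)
Your proposal is correct and follows essentially the same route as the paper's proof: reduce to verifying Lip-semiboundedness of $\a_\infty$, invoke the compatibility of the seminorms $L_{D_\infty}, L_{D_n}$ established in \cite{AiGuIs01} (your ``local isometricity'' identification $\|[D_\infty,g]\|=\mathrm{Lip}(g)$ is exactly what the paper's Clifford-algebra computation with $D_0$ establishes), and then apply the chain-rule estimate $\mathrm{Lip}(f\circ B^{-m})\le\|B^{-m}\|\,\mathrm{Lip}(f)$ together with pure expansivity of $B$. The only cosmetic difference is that the paper carries out the dilation estimate directly on commutators with $D_0$ via the relations $\eps_a\eps_b+\eps_b\eps_a=2\delta_{a,b}$, while you phrase it in terms of gradients and Lipschitz seminorms; these are the same computation.
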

\begin{proof} 
In order to construct a spectral triple on $C(\bt^p)\rtimes_\alpha \mathbb{N}$, according to Theorem \ref{triple-cross-prod-N}, we only need to check that $\alpha_\infty$ is Lip-semibounded, that is
\begin{displaymath}
	\sup \{\Arrowvert [D_\infty ,\alpha_\infty^{-n}(f)]\Arrowvert, n\in\bn\}<\infty, \quad \forall f\in \cl_\infty = \cup_{n\in\bn} C^1(\bt_n).
\end{displaymath}
Let $f\in C^1(\bt_k)$. 
As observed in \cite{AiGuIs01}, the seminorms $L_{D_\infty}$, $L_{D_1}$, $L_{D_2}$, \ldots \; are compatible and we have that 
$$
\| [D_\infty ,\alpha_\infty^{-n}(f)] \| = \| [D_0,f\circ B^{-n}] \|
$$
Moreover, by using the relation $\eps_a \eps_b + \eps_b \eps_a=2 \delta_{a,b}$ we obtain the following equalities
\begin{align*}
 \| [D_0,f] \|^2 & =  \left\| \sum_{a=1}^p \eps_a \otimes \partial^a(f) \right\|^2\\
 & =  \left\| \left(\sum_{a=1}^p \eps_a \otimes \partial^a(f)\right)^* \left(\sum_{a=1}^p \eps_a \otimes \partial^a(f)\right)\right\| \\
 & =  \left\| \sum_{a=1}^p (\eps_a)^2 \otimes |\partial^a f|^2\right\| =  \left\| \sum_{a=1}^p 1 \otimes |\partial^a f|^2\right\|
\end{align*}
Now we compute $\| [D_0,f\circ B^{-n}] \|$. Setting  $X=B^{-n}$ for simplicity, we have that
\begin{align*}
\| [D_0,f\circ X] \|^2 & =  \left\| \sum_{a=1}^p \eps_a \otimes \partial^a(f\circ X) \right\|^2 \\
& =  \left\| \left(\sum_{a=1}^p \eps_a \otimes \partial^a(f\circ X)\right)^* \left(\sum_{a=1}^p \eps_a \otimes \partial^a(f\circ X)\right)\right\| \\
& =  \left\| \left(\sum_{a=1}^p \eps_a \otimes \left( \sum_{i=1}^p X_{a,i} (\partial^i \ov{f})\circ X \right) \right) \left(\sum_{b=1}^p \eps_b \otimes  \left( \sum_{j=1}^p X_{b,j}(\partial^j f)\circ X \right)  \right)\right\| \\
& =  \left\| \sum_{a=1}^p (\eps_a)^2 \otimes  \sum_{i,j=1}^p X_{a,i} X_{a,j} (\partial^i \ov{f})\circ X \cdot (\partial^j f)\circ X \right.\\
& \quad + \sum_{a<b} \eps_a \eps_b  \otimes  \sum_{i,j=1}^p X_{a,i} X_{b,j} (\partial^i \ov{f})\circ X \cdot (\partial^j f)\circ X  \\
& \quad  \left. + \sum_{a>b} \eps_a \eps_b  \otimes  \sum_{i,j=1}^p X_{a,i} X_{b,j} (\partial^i \ov{f})\circ X \cdot (\partial^j f)\circ X \right\| \\
& =  \left\| \sum_{a=1}^p 1 \otimes \left(   \sum_{i,j=1}^p X_{a,i} X_{a,j} (\partial^i \ov{f})\circ X \cdot (\partial^j f)\circ X  \right) \right\| \\
& = \| \big( (\nabla f)\circ X, X^* X (\nabla f)\circ X \big)\| \\
& \leq \| X^* X  \|  \left\| \sum_{a=1}^p 1 \otimes (\partial^a f)^2\right\| =   \| X\|^2 \| [D,f] \|^2 \; .
\end{align*}
These computations and the hypothesis on $B$ being purely expanding (cf. Proposition 2.6 in \cite{AiGuIs01}) imply that
\begin{displaymath}
	\sup\{\Arrowvert [D_\infty ,\alpha_\infty^{-n}(f)]\Arrowvert, n\in\bn\}\leq\sup\{ \Arrowvert B^{-n}\Arrowvert \Arrowvert [D_\infty,f]\Arrowvert, n\in\bn\}<\infty\; .
\end{displaymath}
\end{proof}

\medskip

\subsection{The crossed product for the self-coverings of the  rational rotation algebra}

The present  example is associated with a regular noncommutative self-covering with finite abelian group of deck transformations \cite{AiGuIs01}. 

\begin{definition} \label{def-reg-cov}
A finite (noncommutative) covering with abelian group is an inclusion of  (unital) $C^*$-algebras $\ca\subset \cb$ together with an action of a finite abelian group $\Gamma$ on $\cb$ such that $\ca=\cb^\Gamma$. We will say that $\cb$ is a covering of $\ca$ with deck transformations given by the group $\Gamma$.
\end{definition}


We are now going to give a description of the rational rotation algebra making small modifications to the description of $A_\theta$, $\theta=p/q\in\mathbb{Q}$, seen in \cite{BEEK}. We observe that $A_\theta$ reduces to $C(\bt^2)$ in the case $\theta\in\bz$.
Consider the following matrices 
\begin{eqnarray*}
  (U_0)_{hk} = \delta_{h,k}e^{2\pi i(k-1) \theta }, \quad
  (V_0)_{hk} = \delta_{h+1,k} +\delta_{h,q}\delta_{k,1} \in M_q(\mathbb{C})
\end{eqnarray*}
and $W_0(n) := U_0^{n_1}V_0^{n_2}$, for all $n=(n_1,n_2)\in\bz^2$. Let $p',p''\in\bn$, $p',p''<q$, be such that $pp'+1=n'q$, $pp''-1=n''q$, for some $n',n''\in\bn$, and introduce 
$P := \begin{pmatrix}
0 & p' \\
p'' & 0
\end{pmatrix}
$, and  
$$
\widetilde{\g}_n(f)(t):=\ad(W_0(P n))[f(t+n)]=V_0^{-p''n_1}U_0^{-p'n_2}f(t+n)U_0^{p'n_2}V_0^{p''n_1},
$$ 
for all $t\in\br^2$, $n\in\bz^2$. We have the following description of $A_\theta$  (cf. \cite{BEEK})
$$
A_\theta=\{f\in C(\mathbb{R}^2, M_q(\mathbb{C})) \, : \, f  = \widetilde{\g}_{n}(f),  n\in\bz^2 \}.
$$
This algebra comes with a natural trace 
$$
\tau(f):= \frac{1}{q}\int_{\bt_0} \tr(f(t))dt, 
$$
where we are considering the Haar measure on $\bt_0:=\br^2/B\bz^2$ and $\tr(A)=\sum_i a_{ii}$. We observe that the function $\tr(f(t))$ is $\bz^2$-periodic.

Define 
\begin{align*}
U(t_1,t_2)&:=e^{-2\pi i t_1/q} U_0\\
V(t_1,t_2)&:=e^{-2\pi i t_2/q} V_0
\end{align*}
and
\begin{displaymath}
		\cl_\theta :=\left\{\sum_{r,s}a_{rs}U^rV^s :  (a_{rs})\in S(\mathbb{Z}^2) 	\right\},
\end{displaymath}
where $S(\mathbb{Z}^2)$ is  the set of rapidly decreasing sequences. It is clear that the derivations $\partial_1$ and $\partial_2$, defined as follows on the generators, extend to $\cl_\theta$
\begin{eqnarray*}
	\partial_1(U^hV^k)&=&2\pi ihU^hV^k\\
	\partial_2(U^hV^k)&=&2\pi ikU^hV^k.
\end{eqnarray*} 
Moreover, the above derivations extend to densely defined derivations both on $A_\theta$ and $L^2(A_\theta,\tau)$.

We still denote these extensions with the same symbols. 
We may consider the following spectral triple (see \cite{GBFV})
 \begin{eqnarray*}
	(\cl_0:=\cl_\theta ,  \ch_0:=\bc^{2} \otimes L^2(A_\theta,\tau), D_0 :=-i(\eps_1 \otimes \partial_1+\eps_2 \otimes \partial_2)),
\end{eqnarray*}
where $\eps_1, \eps_2$ denote the Pauli matrices. 

Given the integer-valued matrix $B\in M_2(\bz)$ such that $\det(B)\equiv_q 1$, there is an associated endomorphism  $\alpha: A_\theta\to A_\theta$ defined by $\a(f)(t)=f(Bt)$, \cite{Stacey}. Then, we consider the inductive limit $\ca_\infty=\displaystyle\varinjlim\ca_n$ as in  \eqref{eq:CstarIndLim1}.  
As in the case of the torus one can consider the following isomorphic inductive family: $\ca_n$ consists of continuous $B^n\bz^2$-invariant  matrix-valued functions on $\br^2$, i.e
$$
\ca_n:=\{f\in C(\mathbb{R}^2, M_q(\mathbb{C})) \, : \, f =\widetilde{\g}_{B^n k}(f),  k\in\bz^2 \},
$$ 
with trace
$$
\tau_n(f)=\frac{1}{q |\!\det B^n|}\int_{\bt_n}\tr(f(t))dt,
$$
and the embedding is unital inclusion $\alpha_{n+1,n}: \ca_n\hookrightarrow \ca_{n+1}$.  In particular, $\ca_0=\ca$, and $\ca_1=\cb$. This means that $\ca_\infty$ may be considered as a solenoid $C^*$-algebra  (cf. \cite{McCord}, \cite{LP2}).

On the $n$-th noncommutative covering $\ca_n$, the formula of the Dirac operator doesn't change and we can consider the following spectral triple
\begin{eqnarray*}
	(\cl_\theta^{(n)} ,  \bc^{2} \otimes L^2(\ca_n,\tau), D=-i(\eps_1 \otimes \partial_1 + \eps_2 \otimes \partial_2)).
\end{eqnarray*}

In \cite{AiGuIs01}, we produced a semifinite spectral triple on $\ca_\infty=\varinjlim \ca_n$. More precisely, we constructed a Dirac operator $D_\infty$ acting on $\ch_\infty:= \bc^{2} \otimes L^2(\ca_0,\tau_0)\otimes L^2(\car,\tau)$
\[
D_\infty := D_0 \otimes I - 2\pi \sum_{a=1}^2 \eps_a \otimes I \otimes \bigg(  \sum_{h=1}^\infty I^{\otimes h-1} \otimes \diag(s_{h}(\cdot)^a)  \bigg),
\]
the algebra $\ca_\infty$ embeds into the injective limit 
$$
\varinjlim \cb(\bc^{2} \otimes L^2(\ca_0,\tau_0))\otimes M_{{\molt}^n}(\bc) = \cb(\bc^{2} \otimes L^2(\ca_0,\tau_0)) \otimes \mathrm{UHF}_{\molt}
$$
which in turn embeds into $\cam_\infty := \cb(\bc^{2} \otimes L^2(\ca_0,\tau_0)) \otimes \car$, which is endowed with the trace $\t_\infty := Tr\otimes\t_\car$. Then
$(\cl_\infty, \ch_\infty,D_\infty; \cam_\infty, \t_\infty)$ is a finitely summable, semifinite, spectral triple on $\varinjlim \ca_n$, with Hausdorff dimension $2$ (\cite{AiGuIs01}, Theorem 3.7).


\begin{theorem}
Under the above hypotheses and with the notation of the former section, $A_\theta\rtimes_\a\bn$ can be endowed with the finitely summable semifinite spectral triple $(\cl_\rtimes, \ch_\rtimes,D_\rtimes; \cam_\rtimes, \t_\rtimes)$ of Theorem \ref{triple-cross-prod-N}, with Hausdorff dimension $3$. 
\end{theorem}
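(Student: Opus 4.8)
The plan is to follow verbatim the strategy of Theorem \ref{teo-p-toro}: by Theorem \ref{triple-cross-prod-N} it suffices to check that the automorphism $\a_\infty\in\Aut(\ca_\infty)$ is Lip-semibounded, and then, since the semifinite spectral triple $(\cl_\infty,\ch_\infty,D_\infty;\cam_\infty,\t_\infty)$ has Hausdorff dimension $2$, the resulting triple on $A_\theta\rtimes_\a\bn$ automatically has dimension $2+1=3$. Thus the whole content of the proof is the estimate
$$
\sup_{n\in\bn}\bigl\|[D_\infty,\a_\infty^{-n}(f)]\bigr\|<\infty,\qquad \forall f\in\cl_\infty=\textstyle\bigcup_{n}\cl_\theta^{(n)}.
$$

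First I would appeal to the compatibility of the Lip-seminorms $L_{D_\infty},L_{D_1},L_{D_2},\dots$ proved in \cite{AiGuIs01}, which identifies $\|[D_\infty,\a_\infty^{-n}(f)]\|$ with the commutator of the single operator $D_0=-i(\eps_1\otimes\partial_1+\eps_2\otimes\partial_2)$ on $\bc^2\otimes L^2(A_\theta,\t)$ with the transported function $\a_\infty^{-n}(f)$. Since $[\partial_a,\pi(f)]=\pi(\partial_a f)$ and the $\eps_a$ are self-adjoint with $\eps_a\eps_b+\eps_b\eps_a=2\d_{a,b}$, expanding $[D_0,f]^*[D_0,f]$ shows, exactly as in the torus case, that $\|[D_0,f]\|^2$ is governed by the quadratic form $\sum_a(\partial_a f)^*(\partial_a f)$.

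The core computation is the chain rule for the derivations under $\a_\infty^{-n}$. As $\partial_1,\partial_2$ are the generators of the dual (gauge) action and $\a$ is induced by $t\mapsto Bt$, one gets a linear transformation law $\partial_a\circ\a_\infty^{-n}=\sum_i X_{a,i}\,\a_\infty^{-n}\circ\partial_i$ on $\cl_\infty$, with $X=B^{-n}$. Substituting this and regrouping the terms with the anticommutation $\eps_a\eps_b=-\eps_b\eps_a$ for $a\neq b$, the same algebra as in Theorem \ref{teo-p-toro} yields
$$
\|[D_0,\a_\infty^{-n}(f)]\|^2\le\|X^*X\|\,\Bigl\|\sum_a(\partial_a f)^*(\partial_a f)\Bigr\|=\|B^{-n}\|^2\,\|[D_0,f]\|^2.
$$
Since $B$ is purely expanding (as in the torus example, cf. Proposition 2.6 in \cite{AiGuIs01}), $\sup_n\|B^{-n}\|<\infty$, whence the Lip-semiboundedness follows.

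The one genuine departure from the commutative case, and the step I expect to require the most care, is that here $f$ and its derivatives $\partial_a f$ are $M_q(\bc)$-valued and act by left multiplication on $L^2(A_\theta,\t)$, so their entries no longer commute. Consequently the off-diagonal ($a\neq b$) terms in $[D_0,f]^*[D_0,f]$ cannot simply be dropped as in the scalar torus identity $\|[D_0,f]\|^2=\|\sum_a 1\otimes|\partial^a f|^2\|$; one must instead verify, at the operator level, that the full Clifford expansion is still dominated by $\|X^*X\|\,\|\sum_a(\partial_a f)^*(\partial_a f)\|$. The arithmetic condition $\det(B)\equiv_q 1$ does not enter this estimate: it is only needed, as recalled above, to ensure that $\a$ is a well-defined endomorphism of $A_\theta$.
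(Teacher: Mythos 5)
Your proposal is correct and takes essentially the same route as the paper: the paper likewise invokes Theorem \ref{triple-cross-prod-N} to reduce everything to Lip-semiboundedness of $\a_\infty$, asserts the bound $\|[D_\infty,\a_\infty^{-n}(f)]\|\leq\|B^{-n}\|\,\|[D_\infty,f]\|$ by ``similar computations to those in the proof of Theorem \ref{teo-p-toro}'', and concludes from the purely expanding hypothesis on $B$, with the dimension $2+1=3$ coming from Theorem \ref{triple-cross-prod-N}. If anything, you are more careful than the paper, which does not comment on the point you rightly flag --- that the matrix-valued, noncommuting derivatives $\partial_a f$ prevent one from verbatim repeating the scalar cancellation of the off-diagonal Clifford terms used in the torus case.
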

\begin{proof}
According to  Theorem \ref{triple-cross-prod-N} we only need to check that $\alpha_\infty$ is Lip-semibounded, that is
$$
	\sup\{\Arrowvert [D_\infty,\alpha_\infty^{-n}(f)]\Arrowvert, n\in\bn\} < \infty, \quad \forall f\in \cl_\infty \; .
$$
This is true because similar computations to those in the proof of Theorem \ref{teo-p-toro} yield 
\begin{align*}
	\sup\{\Arrowvert [D_\infty,\alpha_\infty^{-n}(f)]\Arrowvert, n\in\bn\}
	\leq	\sup\{ \| B^{-n}\| \Arrowvert [D_\infty,f]\|, n\in\bn\}
\end{align*}
The hypothesis of $B$ being purely expanding ensures that $\sup\{\Arrowvert [D_\infty,\alpha_\infty^{-n}(f)]\Arrowvert, n\in\bn\}$ is finite.
\end{proof}

\medskip

\subsection{The crossed product for the shift-endomorphism of the UHF-algebra}\label{UHF}

Consider now the case of the UHF-algebra. 
This algebra is defined as the inductive limit of the following  sequence of finite dimensional matrix algebras:   
\begin{eqnarray*}
	M_0 & = & M_\molt(\mathbb{C})\\
	M_n & = & M_{n-1}\otimes M_\molt(\mathbb{C}) \quad n\geq 1,
\end{eqnarray*}
with maps $\phi_{ij}: M_j\to M_i$ given by $\phi_{ij}(a_j)=a_j\otimes 1$.
We denote by $\ca$ the $C^*$-algebra UHF$_r$  and set $M_{-1}=\mathbb{C}1_\ca$  in the inductive limit defining the above algebra. The $C^*$-algebra $\ca$ has a unique normalized trace that we denote by $\tau$.

Consider the projection $P_n:L^2(\ca,\tau)\to L^2(M_n,{\rm Tr})$, where ${\rm Tr}: M_n \to \mathbb{C}$ is the normalized trace, and define 
\begin{eqnarray*}
	Q_n&:=& P_n-P_{n-1}, \quad n\geq 0,\\
	E(x)&:=&\tau(x)1_\ca\,.
\end{eqnarray*} 
For any $s>1$, Christensen and Ivan \cite{Chris} defined the following spectral triple for the algebra UHF$_\molt$ 
\begin{eqnarray*} 
	(\cl_0, L^2(\ca,\tau),D_0=\sum_{n\geq 0} \molt^{ns}Q_n )
\end{eqnarray*}
where $\cl_0$ is the algebra consisting of the elements of $\ca$ with bounded commutator with $D_0$. It was proved that for any such value of the parameter $s$, this spectral triple induces a metric which defines a topology equivalent to the weak$^*$-topology on the state space (\cite[Theorem 3.1]{Chris}).

We consider the endomorphism of $\ca$ given by the right shift, $\a(x)=1\otimes x$. Then as in  \eqref{eq:CstarIndLim1} we may consider the inductive limit $\ca_\infty=\displaystyle\varinjlim\ca_n$. As in the previous sections, we have the following isomorphic inductive family: $\ca_i$  is defined as
\begin{eqnarray*}
		\ca_0&=& \ca;\\
		\ca_n &=& M_\molt(\mathbb{C})^{\otimes n} \otimes \ca_0;\\
		\ca_\infty &=& \varinjlim \ca_i
\end{eqnarray*}
and the embedding is the inclusion.
It is easy to see that $\ca_\infty$ is again the UHF-algebra of the same type, since the corresponding supernatural number is the same.

In \cite{AiGuIs01}, we produced a semifinite spectral triple on $\varinjlim \ca_n$. More precisely,  we defined the following Dirac operator acting on $\ch_\infty := L^2(\car, \tau)\otimes L^2(\ca_0, \tau)$
\begin{equation}
D_{\infty}=I_{-\infty,-1} \otimes D_0+\sum_{k=1}^\infty \molt^{-sk} I_{-\infty,-k-1}\otimes F\otimes E,
\end{equation}
where $I_{-\infty,k}$ is the identity on the factors with indices in $[-\infty, k]$, $F: M_r(\bc)\to M_r(\bc)$ is defined as $F(x):=x-\tr(x)1$ for $x\in M_r(\bc)$, and the algebra $\ca_\infty$ embeds in the injective limit 
$$
\varinjlim \cb(L^2(\ca_0,\tau)) \otimes M_{{\molt}^n}(\bc)=\cb(L^2(\ca_0,\tau))\otimes  \mathrm{UHF}_r
$$
Set $\cl_\infty = \cup_n\cl_n$, $\cam_\infty = \car \otimes \cb(L^2(\ca_{0},\tau))$, $\t_\infty:=\t_\car\otimes Tr$. 
Then $(\cl_\infty,\ch_\infty,D_{\infty}; \cam_\infty,\t_\infty)$ is a finitely summable, semifinite, spectral triple, with Hausdorff dimension $2/s$ (\cite{AiGuIs01}, Theorem 5.6).


\begin{theorem}\label{UHFcrossedprod}
Under the above hypotheses and the notation of the former section, ${\rm UHF}_r\rtimes_\alpha \bn$ can be endowed with the finitely summable semifinite spectral triple $(\cl_\rtimes, \ch_\rtimes,D_\rtimes; \cam_\rtimes,\t_\rtimes)$ of Theorem \ref{triple-cross-prod-N}, with Hausdorff dimension $1+2/s$. 
\end{theorem}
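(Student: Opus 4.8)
The plan is to funnel everything through Theorem \ref{triple-cross-prod-N}: once $\alpha_\infty\in\Aut(\ca_\infty)$ is shown to be Lip-semibounded, that theorem automatically produces a finitely summable semifinite spectral triple on $\mathrm{UHF}_\molt\rtimes_\alpha\bn$ whose Hausdorff dimension is that of the base triple $(\cl_\infty,\ch_\infty,D_\infty;\cam_\infty,\t_\infty)$ raised by $1$, i.e. $2/s+1=1+2/s$. Hence the whole substance of the proof is the verification of
$$
\sup_{m\in\bn}\bigl\|[D_\infty,\alpha_\infty^{-m}(f)]\bigr\|<\infty,\qquad \forall f\in\cl_\infty=\cup_n\cl_n .
$$
Since $\cl_\infty$ is an increasing union, I would fix $n$ and $f\in\cl_n$, say supported on the tensor factors indexed by $\{0,\dots,L\}$, and bound the commutator uniformly in $m$.

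The decisive difference with the torus and rotation-algebra examples (Theorem \ref{teo-p-toro} and the analogous rotation-algebra statement) is that here the Lip-norms are \emph{not} compatible, so one cannot simply factor out an operator-norm constant; instead I would exploit the explicit two-piece structure of $D_\infty$. Writing $D_\infty=I_{-\infty,-1}\otimes D_0+T$ with $T:=\sum_{k\geq1}\molt^{-sk}I_{-\infty,-k-1}\otimes F\otimes E$, I would split
$$
[D_\infty,\alpha_\infty^{-m}(f)]=[I_{-\infty,-1}\otimes D_0,\alpha_\infty^{-m}(f)]+[T,\alpha_\infty^{-m}(f)]
$$
and estimate the two summands separately. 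The key structural observation is that $\alpha_\infty^{-1}$ is the \emph{left} shift, so $\alpha_\infty^{-m}(f)$ is supported on the factors $\{-m,\dots,L-m\}$: the element is pushed toward the negative factors, exactly the region where $D_0$ acts trivially and where the tail coefficients $\molt^{-sk}$ are small.

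For the first summand I would use that $D_0$ only sees the factors of non-negative index; the visible part of $\alpha_\infty^{-m}(f)$ occupies levels $0,\dots,\max(L-m,0)$, so by the Christensen--Ivan estimate \cite{Chris} (the weights $\molt^{js}$ being increasing in the level $j$) the corresponding commutator norm is dominated by its value at $m=0$, hence bounded uniformly in $m$; for $m>L$ it vanishes outright. For the second summand I would note that the $k$-th term of $T$ acts nontrivially only when the factor $-k$ lies in the support of $\alpha_\infty^{-m}(f)$, i.e. $m-L\le k\le m$, so at most $L+1$ terms contribute, each carrying a coefficient $\molt^{-sk}$ with $k\approx m$; summing these geometric weights over the window yields a bound of order $\molt^{-s(m-L)}$, uniformly bounded in $m$ (indeed tending to $0$, which recovers the convergence mentioned in the Introduction).

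I expect the main obstacle to be the bookkeeping in the second summand: one must make the informal ``support'' argument precise by tracking how $F$ (at the single factor $-k$) and the conditional expectation $E$ (onto the factors $\ge0$) act on the shifted element $\alpha_\infty^{-m}(f)$, and verify that the contributions are genuinely dominated by the decaying coefficients $\molt^{-sk}$ rather than by the growing norms produced by $D_0$. It is worth emphasizing why the passage to $\bn$ is essential here: the \emph{right} shifts $\alpha_\infty^{+m}$ would move $f$ into levels $\{m,\dots,m+L\}$, forcing the $D_0$-commutator to grow like $\molt^{(m+L)s}$, so Lip-\emph{boundedness} (over $\bz$) fails, whereas only the favourable left-shift direction enters Lip-semiboundedness. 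Once the two estimates are combined, $\sup_m\|[D_\infty,\alpha_\infty^{-m}(f)]\|<\infty$ follows, and Theorem \ref{triple-cross-prod-N} delivers the asserted spectral triple of Hausdorff dimension $1+2/s$.
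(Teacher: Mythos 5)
Your top-level reduction is the same as the paper's: everything rests on verifying that $\alpha_\infty$ is Lip-semibounded, after which Theorem \ref{triple-cross-prod-N} delivers the triple of dimension $2/s+1$. But your verification contains two genuine gaps. The first is the support-window claim for the $T$-part. The $k$-th term of $T$ is $\molt^{-sk}$ times the projection $Q_{-k}=I_{(-\infty,-k-1]}\otimes F_{\{-k\}}\otimes E_{[-k+1,+\infty)}$, where $E_{[-k+1,+\infty)}$ is the projection onto the trace vector of \emph{all} factors to the right of $-k$. For $k>m$ the whole support $\{-m,\dots,L-m\}$ of $\alpha_\infty^{-m}(f)$ lies inside the region on which this $E$ acts, and the projection onto the trace vector commutes with no non-scalar element sitting there: writing $\alpha_\infty^{-m}(f)=I_{(-\infty,-k]}\otimes g$ one gets $[Q_{-k},\alpha_\infty^{-m}(f)]=I_{(-\infty,-k-1]}\otimes F_{\{-k\}}\otimes[E,g]$, which is nonzero (with norm independent of $k$) whenever $f$ is not a scalar. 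So infinitely many terms contribute, not at most $L+1$; your quantitative conclusion $O(\molt^{-s(m-L)})$ survives only because these extra terms carry the geometric coefficients $\molt^{-sk}$, $k>m$, and hence sum to $O(\molt^{-sm})$ --- an estimate you never make, since you assumed the terms vanish. The second gap is scope: your argument treats only $f$ supported on finitely many tensor factors, whereas $\cl_\infty=\cup_n\cl_n$ is defined by bounded commutators and contains elements of infinite support; extending your bound to such $f$ requires an approximation argument (uniformity of the bounds along $f_j\to f$ together with convergence of the commutators) that is missing. (Your ``dominated by its value at $m=0$'' justification for the $D_0$-summand is also unsupported, but harmless for finitely supported $f$: that summand vanishes for $m>L$, so its supremum over $m$ is a maximum over finitely many terms.)

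For comparison, the paper's proof avoids all bookkeeping through a covariance identity: writing $D_\infty=\sum_{h\in\bz}\molt^{hs}Q_h$ and setting $\Phi=\ad(U_\alpha)$, where $U_\alpha$ is the right shift on $\ch_\infty$, one has $\Phi(Q_h)=Q_{h+1}$ and $\Phi\upharpoonright_{\ca_\infty}=\alpha_\infty$, whence
\begin{equation*}
[D_\infty,\alpha_\infty^{-k}(f)]=\molt^{-ks}\,\Phi^{-k}\bigl([D_\infty,f]\bigr),
\qquad\text{so}\qquad
\bigl\|[D_\infty,\alpha_\infty^{-k}(f)]\bigr\|=\molt^{-ks}\bigl\|[D_\infty,f]\bigr\|
\end{equation*}
exactly, for every $f\in\cl_\infty$, with no support restriction. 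If you wish to keep your splitting, the repairs are: replace the window claim by the geometric-tail estimate above, and add the density argument for general $f$ --- or simply note that the shift-conjugation identity subsumes both.
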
	
\begin{proof}
According to Theorem \ref{triple-cross-prod-N}, in order to construct a spectral triple on $\ca \rtimes_\a\bn$  we only need to check that $\alpha_\infty$ is Lip-semibounded, that is
\begin{displaymath}
	\sup\{\Arrowvert [D_\infty,\alpha_\infty^{-k}(f)]\Arrowvert, k\in\bn\}< \infty, \quad \forall f\in \cl_\infty.
\end{displaymath}
This is true because
\begin{align*}
	\Arrowvert [D_\infty,\alpha_\infty^{-k}(f)]\Arrowvert = \molt^{-ks}\Arrowvert [D_\infty,f]\Arrowvert.
\end{align*}
In fact, let $f=(\bigotimes_{k=-\infty}^{-n-1}I)\otimes a\in \ca_n$, 
$\alpha_\infty^{k}(f)=(\bigotimes_{j=-\infty}^{-n+k-1}I)\otimes a\in \ca_{n-k}$ for $k\in\bz$. 

The Hilbert space on which $D_\infty$ acts is the completion of $\ca_\infty$. 
On this Hilbert space, we consider the right shift on the factors  and we denote it by $U_\alpha$. We set $\Phi := \ad (U_\alpha)$. Then we have that
\begin{eqnarray*}
	 [D_\infty,\alpha_\infty^{-k}(f)] &=&  \sum_{h\in\bz} \molt^{hs}[Q_h,\left(\bigotimes_{j=-\infty}^{-n-k-1}I\right)\otimes a]\\
	&=& \Phi^{-k} \left(\sum_{h\in\bz} \molt^{hs}[Q_{k+h},\left(\bigotimes_{j=-\infty}^{n-1}I\right)\otimes a]\right)\\
	&=& \molt^{-ks}   \Phi^{-k}([D_\infty,f]) 
\end{eqnarray*}
where we used that $\Phi(Q_h)=Q_{h+1}$ and $\Phi\upharpoonright_{\ca_\infty} = \alpha_\infty$. 
\end{proof}
In the theorem above, we considered the $C^*$-algebra ${\rm UHF}_r\rtimes_\alpha \bn$. We note that the crossed product of the UHF of type $2^\infty$ under the action of the bilateral shift, namely the $C^*$-algebra ${\rm UHF}_2 \rtimes_\alpha  \bz$, 
  was studied in \cite{BKRS}.
 
\medskip

\subsection{The crossed product for the self-coverings of the Sierpi\'nski gasket}

We conclude this paper with the case of a self-covering of the Sierpi\'nski gasket that was studied by the authors in \cite{AGI3}.
The Sierpi\'nski gasket is the self-similar fractal determined by $3$ similarities with scaling parameter 1/2 centered in the vertices 
$v_0=(0,0)$, $v_1=(1/2,\sqrt{3}/2)$, $v_2=(1,0)$, 
namely 
the non-empty, compact set $K$, such that
$$
K=\bigcup_{j=0,1,2}w_j(K),
$$
where $w_j$ is the dilation around $v_j$ with contraction parameter $1/2$ (see Figure \ref{fig:covering}). 
Denote by $V_0(K)$ the set  $\{v_0, v_1, v_2\}$, and let $E_0(K):=\{ (p,q) : p,q\in V_0, p\neq q\}$.  
We call an element of the family $\{w_{i_1} \circ \dots \circ w_{i_k}(K):k\geq0\}$ a {\it cell}, and call its diameter the size of the cell.
We call an element of the family $E(K)=\{w_{i_1} \circ \dots \circ w_{i_k}(e):k\geq0, e\in E_{0}(K)\}$  an {\it (oriented) edge} of $K$ and we denote by $e^-$ (resp. $e^+$) the source (resp. the target) of the oriented edge $e$.
Note that a cell $C:=w_{i_1} \circ \dots \circ w_{i_k}(K)$ has $\size(C)=2^{-k}$ and, if $e_0\in E_{0}(K)$, then $e=w_{i_1} \circ \dots \circ w_{i_k}(e_0)$ has length $2^{-k}$.

In the following we shall consider $K_0:=K$, $E_0:=E(K)$, $K_n:=w_0^{-n}K_0$.
Let us now consider  the middle point $x_{i,i+1}$ of the segment $(w_0^{-1}v_i,w_0^{-1}v_{i+1})$, $i=0,1,2$, the map $R_{i+1,i}:w_0^{-1}w_{i}K\to w_0^{-1}w_{i+1}K$ consisting of the rotation of $\frac43\pi$ around the point $x_{i,i+1}$, $i=0,1,2$. 

We then construct the coverings $p:K_1\to K$ and $\phi: K\to K$ given by
$$
p(x)=
\begin{cases}
x,&x\in K,\\
R_{0,1}(x),&x\in w_0^{-1}w_1 K,\\
R_{0,2}(x),&x\in w_0^{-1}w_2 K,
\end{cases}
$$
and
\begin{displaymath}
\phi(x)=\left\{
 	\begin{array}{ll}
 		w_0^{-1}x & \textrm{if $x\in C_0$} \\ 
 		R_{0,1}(w_0^{-1}(x)) & \textrm{if $x\in C_1$}\\
 		R_{0,2}(w_0^{-1}(x)) & \textrm{if $x\in C_2$}\\
 	\end{array}
 		\right.\\
\end{displaymath}
Note that $p(x)=\phi(w_0(x))$ for all $x\in K_1$ (see Figure \ref{fig:covering}).
\begin{figure} 
\scalebox{1.75}{\def\trianglewidth{2cm}%
\pgfdeclarelindenmayersystem{Sierpinski triangle}{
    \symbol{X}{\pgflsystemdrawforward}
    \symbol{Y}{\pgflsystemdrawforward}
    \rule{X -> X-Y+X+Y-X}
    \rule{Y -> YY}
}
\foreach \level in {6}{

\tikzset{
    l-system={step=\trianglewidth/(2^\level), order=\level, angle=-120}
}%
\begin{tikzpicture}
    \fill [black] (0,0) -- ++(0:\trianglewidth) -- ++(120:\trianglewidth) -- cycle;
    \draw [draw=none] (0,0) l-system
    [l-system={Sierpinski triangle, axiom=X},fill=white];

\node (bbb) at (-.05,-0.075) {$\scalebox{.5}{$v_0$}$}; 
\node (bbb) at (1,1.85) {$\scalebox{.5}{$v_2$}$}; 
\node (bbb) at (2.09,-.075) {$\scalebox{.5}{$v_1$}$};  
\node (bbb) at (0,-.87) {$\scalebox{.5}{$\;$}$};  

\end{tikzpicture}
}%
}
\scalebox{3}{
\def\trianglewidth{2cm}%
\foreach \level in {6}{

\tikzset{
    l-system={step=\trianglewidth/(2^\level), order=\level, angle=-120}
}%
\begin{tikzpicture}
    \fill [black] (0,0) -- ++(0:\trianglewidth) -- ++(120:\trianglewidth) -- cycle;
    \draw [draw=none] (0,0) l-system
    [l-system={Sierpinski triangle, axiom=X},fill=white];
 
\draw[line width=0.05mm, <-] (.75,-.1) to[out=-90,in=-90] (1.25,-.1);
\draw[line width=0.05mm, <-] (0.25,.6) to[out=135,in=120]  (.5,1);

\node (bbb) at (-.05,-0.075) {$\scalebox{.25}{$v_0$}$}; 
\node (bbb) at (.395,.87) {$\scalebox{.25}{$x_{2,0}$}$}; 
\node (bbb) at (1.65,.87) {$\scalebox{.25}{$x_{1,2}$}$}; 
\node (bbb) at (1,1.85) {$\scalebox{.25}{$w_0^{-1}v_2$}$}; 
\node (bbb) at (1,-.075) {$\scalebox{.25}{$x_{0,1}=v_1$}$}; 
\node (bbb) at (2.07,-.075) {$\scalebox{.25}{$w_0^{-1}v_1$}$}; 
\node (bbb) at (0.1,.89) {$\scalebox{.25}{$R_{0,2}$}$}; 
\node (bbb) at (1.1,-.35) {$\scalebox{.25}{$R_{0,1}$}$}; 

\end{tikzpicture}
}
}
\label{fig:covering}
\caption{The Sierpi\'nski gasket $K=K_0$ and the covering map $p_1=p: K_1\to K$.}
\end{figure}
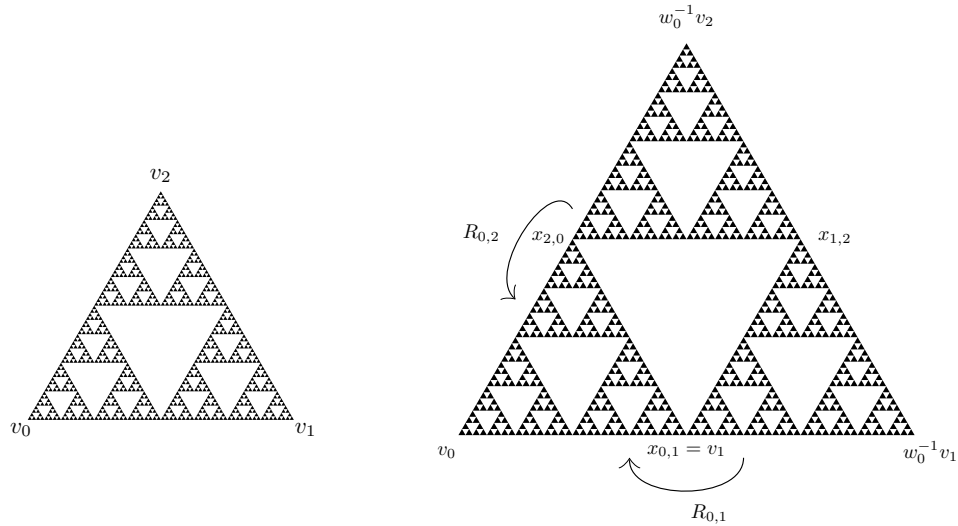

Similarly, for every $n\geq 0$, we define a family of coverings $p_n: K_{n+1}\to K_{n}$ and $\phi_n: K_n\to K_n$ by $p_{n+1} := w_0^{-n} \circ p \circ w_0^{n}$ and $\phi_n := w_0^{-n} \circ \phi \circ w_0^{n}$.

\begin{proposition} 
The following diagrams are commutative
\begin{equation*} 
	\xymatrix{
	& K_0  &&  K_1 \ar[ll]^{ p_1  }  && K_2 \ar[ll]^{ p_2 }  && \cdots \ar[ll]^{p_3} \\
	&&&&&&&&\\
	& K_0 \ar[uu]^{  \phi_0  }  && K_1 \ar[ll]^{ p_1  } \ar[uu]^{ \phi_1 } && K_2 \ar[ll]^{ p_2  } \ar[uu]^{ \phi_2 } && \cdots \ar[ll]^{ p_3 }
         }
\end{equation*}
\end{proposition}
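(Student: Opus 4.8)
The plan is to reduce the commutativity of the entire ladder to a single square at each level, namely to the identity $p_{n+1}\circ\phi_{n+1}=\phi_n\circ p_{n+1}$ of maps $K_{n+1}\to K_n$, for every $n\geq 0$ (here the $\phi_i$ are the upward vertical arrows and the $p_i$ the leftward horizontal ones). Every map appearing in this square is, by its very definition, a conjugate by a power of the contraction $w_0$ of one of the two level-$0$ maps $p\colon K_1\to K_0$ and $\phi\colon K_0\to K_0$; explicitly $p_{n+1}=w_0^{-n}\circ p\circ w_0^{n}$, $\phi_n=w_0^{-n}\circ\phi\circ w_0^{n}$ and $\phi_{n+1}=w_0^{-(n+1)}\circ\phi\circ w_0^{n+1}$. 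So the first move is to substitute these formulas and let the interior powers of $w_0$ telescope, using $w_0^{n}\circ w_0^{-(n+1)}=w_0^{-1}$ on the left and $w_0^{n}\circ w_0^{-n}=\id$ on the right.

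This rewrites the left-hand side as $w_0^{-n}\circ p\circ w_0^{-1}\circ\phi\circ w_0^{n+1}$ and the right-hand side as $w_0^{-n}\circ\phi\circ p\circ w_0^{n}$. Cancelling the common outer factors $w_0^{-n}$ and $w_0^{n}$, the square identity becomes equivalent to the purely level-$0$ relation $p\circ w_0^{-1}\circ\phi\circ w_0=\phi\circ p$. This is where the geometric input enters: the identity $p(x)=\phi(w_0(x))$ recorded just before the statement is exactly $p=\phi\circ w_0$ on $K_1$, equivalently $\phi=p\circ w_0^{-1}$ on $K_0$. Using the second form for the factor $p\circ w_0^{-1}$ and the first form for the factor $\phi\circ w_0$, the left-hand side collapses to $\phi\circ\phi\circ w_0=\phi\circ p$, which is the right-hand side; hence both composites equal $w_0^{-n}\circ\phi\circ p\circ w_0^{n}$ and the square commutes.

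The only genuinely delicate point --- the step I expect to be the real obstacle --- is the bookkeeping of domains, since $p$, $\phi$ and the relevant inverse branch of $w_0$ are all defined piecewise over the three cells $C_0,C_1,C_2$ (and their $w_0^{-n}$-images). One must verify that, at each stage of the composition, a point of $K_{n+1}$ is actually carried into the subcell on which the next map is prescribed, so that the telescoping of the $w_0$-powers and the two substitutions above are legitimate pointwise and not merely formal. Because $w_0$ maps $K_{n+1}$ onto $K_n$ and the defining relation $p=\phi\circ w_0$ already matches the cells of $K_1$ with those of $K_0$, this check is routine and can be organised cell by cell, in the same spirit as the direct computation behind Proposition \ref{prop41}. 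Since the two rows of the diagram and all the maps are identical in the two copies, establishing this one square for every $n\geq 0$ gives the commutativity of the whole diagram.
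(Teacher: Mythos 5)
Your proof is correct and takes essentially the same route as the paper's: you telescope the conjugation formulas $p_{n+1}=w_0^{-n}\circ p\circ w_0^{n}$ and $\phi_n=w_0^{-n}\circ\phi\circ w_0^{n}$ to reduce every square to the single level-zero relation $p\circ w_0^{-1}\circ\phi\circ w_0=\phi\circ p$, which is exactly the paper's level-one square $p_1\circ\phi_1=\phi_0\circ p_1$, and both arguments close it with the stated identity $p=\phi\circ w_0$. The domain bookkeeping you flag as the ``real obstacle'' is actually immediate: the relation $p=\phi\circ w_0$ is given globally on $K_1$ and $w_0^{k}$ maps $K_{n+k}$ bijectively onto $K_n$, so no cell-by-cell verification is required.
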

\begin{proof}
Indeed, first note that $\phi_0 \circ p_1=\phi \circ p=p\circ \phi_1$ and $w_0\circ \phi_1=p_1$, which implies that $p_1\circ\phi_1=\phi_0\circ p_1$.
Then, for any $n\geq 1$ we have 
\begin{align*}
p_n \circ \phi_n & = w_0^{-n+1} \circ p \circ w_0^{n-1} \circ w_0^{-n} \circ \phi \circ w_0^n= w_0^{-n+1} \circ p \circ w_0^{-1} \circ \phi \circ w_0 \circ w_0^{n-1} \\
& = w_0^{-n+1} \circ p_1 \circ \phi_1 \circ w_0^{n-1} = w_0^{-n+1} \circ \phi_0 \circ p_1 \circ w_0^{n-1} = \phi_{n-1} \circ p_n.
\end{align*}
\end{proof}


It follows that the maps $\{\phi_n\}_{n\geq 0}$ induce a map in the projective limit and by functoriality a map on $\varinjlim C(K_i)$ which we denote by $\alpha_\infty$. 
An element $f\in C(K_n)$ can be seen in $\varinjlim C(K_i)$ as the sequence $[f]=(0_{n}, f, f\circ p_{n+1}, f\circ p_{n, n+2}, \ldots )$, where  $p_{n, n+k}:= p_{n+1}\circ \cdots\circ p_{n+k}$.
Accordingly the map $\alpha_\infty$ reads as
$$
\alpha_\infty[f] := (0_{n}, f\circ \phi_n, f\circ p_{n+1} \circ \phi_{n+1}, f\circ p_{n, n+2} \circ \phi_{n+2}, \ldots ).
$$
By functoriality each $(\phi_n)^*: C(K_n)\to C(K_n)$ is a proper endomorphism, that is, it is injective, but not surjective. 
With the notation of Proposition \ref{prop41}, we set $\b_i$ equal to $w_0^*$ for all $i\geq 0$.
Thanks to Proposition 3.1, the map $\alpha_\infty$ is invertible and its inverse is given by 
$$
\alpha_\infty^{-1}[f] := (0_{n+1}, f\circ w_0, f\circ p_{n+1} \circ w_0, f\circ p_{n, n+2} \circ w_0, \ldots ).
$$
Denote by $E_n:=\{w_0^{-n}e, e\in E(K)\}$, $E_\infty:=\cup_{n\geq 0} E_n$, $E^n:=\{e\in E_\infty, \length(e)=2^n\}$, $P^n$ the projection of $\ell_2(E_\infty)$ onto $\ell_2(E^n)$.
It was shown in \cite[Sec. 6]{AGI3} that $\ca_\infty:=\varinjlim C(K_n)$ supports a semifinite spectral triple $(\cl_\infty,\ch_\infty, D_\infty; \cam_\infty, \t_\infty)$,
where $\cam_\infty:=\pi_\t(B_\infty)''$ is a suitable closure of the geometric operators (see \cite[Sec. 5]{AGI3} for a precise definition),
$D_\infty:=F|D|: \ell^2(E_\infty)\to  \ell^2(E_\infty)$,  
$F$ is the orientation reversing operator on edges and 
$$
|D_\infty|:=\sum_{n\in\bz}2^{-n}P^n. 
$$


\begin{theorem}\label{teo-gasket}
Under the above hypotheses and with the notation of the former section, $C(K)\rtimes_\a\bn$ can be endowed with the finitely summable semifinite spectral triple $(\cl_\rtimes, \ch_\rtimes,D_\rtimes; \cam_\rtimes, \t_\rtimes)$ of Theorem \ref{triple-cross-prod-N}, with Hausdorff dimension $\log_23+1$. 
\end{theorem}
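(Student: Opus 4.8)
The plan is to reduce everything to Theorem \ref{triple-cross-prod-N}, exactly as in the previous three examples: since $(\cl_\infty,\ch_\infty,D_\infty;\cam_\infty,\t_\infty)$ is already a finitely summable semifinite spectral triple on $\ca_\infty = \varinjlim C(K_n)$ of Hausdorff dimension $\log_2 3$, it suffices to verify that the automorphism $\a_\infty$ is Lip-semibounded, i.e.
\begin{displaymath}
\sup\{\|[D_\infty,\a_\infty^{-k}(f)]\|, k\in\bn\} < \infty, \qquad \forall f\in\cl_\infty = \cup_n\cl_n.
\end{displaymath}
Once this is shown, Theorem \ref{triple-cross-prod-N} produces the desired spectral triple on $C(K)\rtimes_\a\bn$ and fixes its Hausdorff dimension to be $\log_2 3 + 1$.

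Following the UHF example, I would implement the dynamics by a shift unitary on the Hilbert space of edges. Since $\b_i = w_0^*$ and, as recorded before the statement (cf. Proposition \ref{prop41}), $\a_\infty^{-1}$ is realised by precomposition with $w_0$, I introduce the operator $U$ on $\ell^2(E_\infty)$ determined by $U\delta_e := \delta_{w_0 e}$, and set $\Phi := \Ad(U)$, so that $\Phi\upharpoonright_{\ca_\infty} = \a_\infty$. The first point to check is that $U$ is genuinely unitary: although $w_0$ contracts each cell, it is a bijection of $E_\infty = \cup_{n\geq 0}E_n$ because $E_\infty$ contains edges of all scales and $w_0^{-1}e\in E_\infty$ for every $e$. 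Two structural identities then drive the computation: because $w_0$ halves edge lengths, $\Phi(P^n) = P^{n-1}$ for every $n\in\bz$, while the dilation commutes with the orientation reversal $F$, so $\Phi(F) = F$.

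The commutator estimate is now immediate. From $|D_\infty| = \sum_{n\in\bz} 2^{-n}P^n$ and $\Phi(P^n) = P^{n-1}$ one gets $\Phi(|D_\infty|) = \tfrac12|D_\infty|$, hence $\Phi(D_\infty) = \Phi(F)\Phi(|D_\infty|) = \tfrac12 D_\infty$ and $\Phi^k(D_\infty) = 2^{-k}D_\infty$. Writing $\a_\infty^{-k}(f) = U^{-k}fU^k$ and conjugating,
\begin{displaymath}
[D_\infty,\a_\infty^{-k}(f)] = U^{-k}[\Phi^k(D_\infty),f]U^k = 2^{-k}\,\Phi^{-k}\big([D_\infty,f]\big),
\end{displaymath}
so that $\|[D_\infty,\a_\infty^{-k}(f)]\| = 2^{-k}\|[D_\infty,f]\|$ for all $k\in\bn$ and $f\in\cl_\infty$. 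The supremum over $k$ is therefore attained at $k=0$ and equals $\|[D_\infty,f]\| < \infty$, which is precisely Lip-semiboundedness.

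The one genuinely delicate step — and the place where I expect the real work to lie — is the construction of $U$ and the verification of $\Phi(P^n) = P^{n-1}$ within the concrete representation $\pi_\t$ on $\cam_\infty = \pi_\t(B_\infty)''$ of \cite{AGI3}. Unlike the tensorial shift of the UHF case, this shift comes from the self-similar geometry of the infinite gasket, so one must track how $w_0$ acts on the edge set $E_\infty$ and on the spectral projections $P^n$ of $|D_\infty|$, and confirm compatibility with the representation and the trace $\t_\infty$. It is worth stressing that the branched nature of $\phi$ plays no role here: the map implementing $\a_\infty^{-1}$ is the honest, unbranched dilation $w_0$, and it is exactly this that yields the clean scaling $\Phi(|D_\infty|) = \tfrac12|D_\infty|$ and hence the geometric decay $2^{-k}$ of the commutators.
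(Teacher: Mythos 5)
Your proposal is correct, and after the (correct) reduction to Lip-semiboundedness via Theorem \ref{triple-cross-prod-N} it establishes exactly the identity the paper establishes, namely $\|[D_\infty,\a_\infty^{-k}(f)]\|=2^{-k}\|[D_\infty,f]\|$ for $f\in\cl_\infty$, $k\in\bn$; the difference lies in how this identity is obtained. The paper computes directly: it writes $[D_\infty,g]$ as $\oplus_{e\in E_\infty}\tfrac{g(e^+)-g(e^-)}{l(e)}F$, substitutes $\a_\infty^{-k}(f)=f\circ w_0^k$, uses $l(e)=2^k\,l(w_0^k(e))$, and re-indexes the direct sum along the bijection $e\mapsto w_0^k(e)$ of $E_\infty$. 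You instead implement the dynamics by the edge-shift unitary $U\delta_e=\delta_{w_0e}$ and deduce the covariance $\Phi^k(D_\infty)=2^{-k}D_\infty$ for $\Phi=\Ad(U)$, then conjugate; this is precisely the paper's own strategy for the UHF example (Theorem \ref{UHFcrossedprod}, with $U_\alpha$, $\Phi(Q_h)=Q_{h+1}$, $\Phi\upharpoonright_{\ca_\infty}=\a_\infty$), transported to the gasket. The two arguments are the same computation in different packaging --- the paper's re-indexing \emph{is} conjugation by $U^k$ --- and your structural claims all hold: $w_0$ restricts to a bijection of $E_\infty$ halving lengths (so $U$ is unitary, $\Phi(P^n)=P^{n-1}$, $\Phi(F)=F$), and $\Phi\upharpoonright_{\ca_\infty}=\a_\infty$ follows from the intertwining relation $w_0\circ p_{n+1}=p_n\circ w_0$, which makes composition with $w_0^{\pm 1}$ compatible with the connecting maps of the solenoid. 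One remark: the step you single out as genuinely delicate --- compatibility of $U$ with $\cam_\infty=\pi_\t(B_\infty)''$ and with the trace $\t_\infty$ --- is not needed at all. Lip-semiboundedness involves only operator norms of commutators, and these are invariant under conjugation by \emph{any} unitary of $\ch_\infty=\ell^2(E_\infty)$, whether or not $U$ normalizes $\cam_\infty$ or preserves $\t_\infty$; the semifinite data enter solely through Theorem \ref{triple-cross-prod-N}. What your packaging buys is the clean scaling law for $D_\infty$ and a uniform treatment of the UHF and gasket cases; what the paper's direct computation buys is that it never needs to introduce $U$ or discuss its interaction with the representation at all.
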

\begin{proof}
According to Theorem \ref{triple-cross-prod-N}, in order to construct a spectral triple on $C(K)\rtimes_\alpha \mathbb{N}$ we only need to check that $\alpha_\infty$ is Lip-semibounded, that is
$$
\sup_{k\geq 0} \Arrowvert[D_\infty, \alpha_\infty^{-k}(f)]\Arrowvert <\infty, \quad \forall f\in \cl_\infty := \cup_{n\geq 0}\ \mathrm{Lip}(K_n).
$$
We are going to show that for any $f\in C(K_n)$ it holds that
$$
\Arrowvert [D_\infty, \alpha_\infty^{-k}(f)] \Arrowvert =\frac{\Arrowvert [D_\infty, f]\Arrowvert}{2^k} \quad k\in \mathbb{N}.
$$
Indeed, since both $p_n$ and $\phi_n$ are isometries, we have that
\begin{eqnarray*}
\Arrowvert [D_\infty, \alpha_\infty^{-k}(f)] \Arrowvert &=& \left\Arrowvert \oplus_{e\in E_\infty} \frac{\alpha_\infty^{-k}(f)(e^+)-\alpha_\infty^{-k}(f)(e^-)}{l(e)} F \right\Arrowvert \\
&=& \left\Arrowvert \oplus_{e\in E_\infty} \frac{f(w_0^k(e^+))-f(w_0^k(e^-))}{l(e)} F \right\Arrowvert \\
&=& \left\Arrowvert \oplus_{e\in E_\infty} \frac{f(w_0^k(e^+))-f(w_0^k(e^-))}{2^k l(w_0^k(e))} F \right\Arrowvert \\
&=& \left\Arrowvert \oplus_{e'\in E_\infty} \frac{f(e'^+)-f(e'^-)}{2^k l(e')} F\right\Arrowvert \\
&=& \frac{\Arrowvert [D_\infty, f]\Arrowvert}{2^k}.
\end{eqnarray*}
\end{proof}

\section*{Acknowledgement}
We thank the referee for the attentive reading of this article and for useful suggestions.
This work was supported by the following institutions:
the ERC Advanced Grant 669240 QUEST "Quantum Algebraic Structures and Models", 
the MIUR PRIN ``Operator Algebras, Noncommutative Geometry and Applications'', 
the INdAM-CNRS GREFI GENCO, and the INdAM GNAMPA. 
V. A. acknowledges the support by the Swiss National Science foundation through the SNF project no. 178756 (Fibred links, L-space covers and algorithmic knot theory).
D. G. and T. I. acknowledge the MIUR Excellence Department Project awarded to the Department of Mathematics, University of Rome Tor Vergata, CUP E83C18000100006.


\end{document}